\newcommand{\A}{\mathbf{A}}
\newcommand{\G}{\mathbf{G}}
\renewcommand{\L}{\mathbb{L}}
\renewcommand{\P}{\mathbf{P}}
\newcommand{\Z}{\mathbf{Z}}
\newcommand{\sA}{\mathcal{A}}
\newcommand{\sB}{\mathcal{B}}
\newcommand{\sF}{\mathcal{F}}
\newcommand{\sG}{\mathcal{G}}
\newcommand{\sH}{\mathcal{H}}
\newcommand{\sO}{\mathcal{O}}
\newcommand{\bH}{\mathbb{H}}
\newcommand{\fp}{\mathfrak{p}}
\newcommand{\Mod}{\hbox{--}\operatorname{Mod}}
\newcommand{\Span}{\operatorname{\mathbf{Span}}}
\newcommand{\Cor}{\operatorname{\mathbf{Cor}}}
\newcommand{\Mack}{\operatorname{\mathbf{Mack}}}
\newcommand{\HI}{\operatorname{\mathbf{HI}}}
\newcommand{\PST}{{\operatorname{\mathbf{PST}}}}
\newcommand{\NST}{\operatorname{\mathbf{NST}}}
\newcommand{\DM}{\operatorname{\mathbf{DM}}}
\newcommand{\Alb}{\operatorname{Alb}}
\newcommand{\Hom}{\operatorname{Hom}}
\newcommand{\uHom}{\operatorname{\underline{Hom}}}
\newcommand{\Ext}{\operatorname{Ext}}
\newcommand{\Ker}{\operatorname{Ker}}
\newcommand{\IM}{\operatorname{Im}}
\newcommand{\Coker}{\operatorname{Coker}}
\newcommand{\uCH}{\underline{CH}}
\newcommand{\Tr}{\operatorname{Tr}}
\newcommand{\Pic}{\operatorname{Pic}}
\newcommand{\Spec}{\operatorname{Spec}}
\newcommand{\oo}{\operatornamewithlimits{\otimes}\limits}
\newcommand{\by}[1]{\overset{#1}{\longrightarrow}}
\newcommand{\yb}[1]{\overset{#1}{\longleftarrow}}
\newcommand{\iso}{\by{\sim}}
\newcommand{\osi}{\yb{\sim}}
\newcommand{\rat}{{\operatorname{rat}}}
\newcommand{\eff}{{\operatorname{eff}}}
\newcommand{\Zar}{{\operatorname{Zar}}}
\newcommand{\Nis}{{\operatorname{Nis}}}
\newcommand{\surj}{\rightarrow\!\!\!\!\!\rightarrow}
\newcommand{\Surj}{\relbar\joinrel\surj}
\newcommand{\Res}{\operatorname{Res}}
\newcommand{\id}{{\operatorname{id}}}
\newcommand{\divi}{{\operatorname{div}}}
\newcommand{\Supp}{{\operatorname{Supp}}}
\newcommand{\Gal}{{\operatorname{Gal}}}
\newcommand{\pd}{{\partial}}
\newcommand{\car}{\operatorname{char}}
\renewcommand{\phi}{\varphi}
\renewcommand{\epsilon}{\varepsilon}
\renewcommand{\div}{\operatorname{div}}
\newcounter{spec}
\newenvironment{thlist}{\begin{list}{\rm{(\roman{spec})}}%
{\usecounter{spec}\labelwidth=20pt\itemindent=0pt\labelsep=10pt}}%
{\end{list}}%
\newenvironment{resume}{\noindent{\sc r\'esum\'e.}\ }{}
\newtheorem{lemma}{Lemma}[section]
\newtheorem{thm}[lemma]{Theorem}
\newtheorem{theorem}[lemma]{Theorem}
\newtheorem{prop}[lemma]{Proposition}
\newtheorem{proposition}[lemma]{Proposition}
\newtheorem{cor}[lemma]{Corollary}
\newtheorem{corollary}[lemma]{Corollary}
\theoremstyle{definition}
\newtheorem{defn}[lemma]{Definition}
\newtheorem{constr}[lemma]{Construction}
\newtheorem{definition}[lemma]{Definition}
\newtheorem{nota}[lemma]{Notation}
\newtheorem{para}[lemma]{}
\theoremstyle{remark}
\newtheorem{qn}[lemma]{Question}
\newtheorem{rk}[lemma]{Remark}
\newtheorem{remark}[lemma]{Remark}
\newtheorem{example}[lemma]{Example}
\numberwithin{equation}{section}
\begin{document}
\title{Voevodsky's motives and Weil reciprocity}
\author{Bruno Kahn}
\address{Institut de Math\'ematiques de Jussieu\\UMR 7586\\ Case 247\\4 place
Jussieu\\75252 Paris Cedex 05\\France}
\email{kahn@math.jussieu.fr}
\author{Takao Yamazaki}
\address{Institute of Mathematics, Tohoku University, Aoba, Sendai, 980-8578, Japan}
\email{ytakao@math.tohoku.ac.jp}
\date{November 29, 2012}
\thanks{The second author is supported by 
Grant-in-Aid for Challenging Exploratory Research (22654001),
Grand-in-Aid for Young Scientists (A) (22684001),
and Inamori Foundation.}

\keywords{Motives, homotopy invariance, Milnor $K$-groups, Weil reciprocity}

\subjclass[2010]{19E15 (14F42, 19D45, 19F15)}

\begin{abstract}
We describe So\-me\-ka\-wa's $K$-group associated to a finite
collection of semi-abelian varieties (or more general sheaves) in terms of the tensor product in Voevodsky's category of motives. While Somekawa's definition is based on Weil reciprocity, Voevodsky's category is based on homotopy invariance. We apply this to explicit descriptions of certain algebraic cycles.
\end{abstract}
\maketitle

\tableofcontents

\section{Introduction}
\begin{para}In this article, we construct an isomorphism
\begin{equation}\label{eq1}
K(k; \sF_1,\dots,\sF_n)\iso 
\Hom_{\DM_-^\eff}(\Z,\sF_1[0]\otimes\dots\otimes \sF_n[0]).
\end{equation}

Here $k$ is a perfect field, and
$\sF_1,\dots, \sF_n$ are homotopy invariant Nisnevich sheaves with
transfers in the sense of \cite{voetri}.
On the right hand side, 
the tensor product $\sF_1[0]\otimes\dots\otimes \sF_n[0]$ 
is computed in Voevodsky's triangulated category $\DM_-^\eff$
of effective motivic complexes. 
The group $K(k; \sF_1,\dots,\sF_n)$ 
 will be defined in Definition \ref{def:k-groups}
by an explicit set of generators and relations:
it is a generalization of the group
which was defined by K. Kato and studied by M. Somekawa in \cite{somekawa} when $\sF_1,\dots,\sF_n$ are semi-abelian varieties.  
\end{para}

\begin{para}
In the introduction of \cite{somekawa}, 
Somekawa wrote that he expected an isomorphism of the form
\[K(k; G_1,\dots,G_n)\simeq \Ext^n_{MM}(\Z,G_1[-1]\otimes\dots\otimes G_n[-1])\]
where 
$MM$ is a conjectural abelian category of mixed motives over $k$,
$G_1, \dots, G_n$ are semi-abelian varieties over $k$,
and $G_1[-1], \dots, G_n[-1]$ are the corresponding $1$-motives.
Since we do not have such a category $MM$ at hand,
\eqref{eq1} provides the closest approximation to Somekawa's expectation.
\end{para}

%

\begin{para}\label{para:kato} 
The most basic case of  \eqref{eq1} is $\sF_1=\cdots=\sF_n=\G_m$.
By \cite[Theorem 1.4]{somekawa}, the left hand side is  isomorphic to the usual Milnor $K$-group $K_n^M(k)$.
The right hand side  is almost by definition
the motivic cohomology group $H^n(k,\Z(n))$.
Thus,
when  $k$ is perfect,
we get a new and less combinatorial proof
of the Suslin-Voevodsky isomorphism
\cite[Thm. 3.4]{sus-voe}, \cite[Thm. 5.1]{mvw}
\begin{equation}\label{eq:sv}
 K_n^M(k) \simeq H^n(k,\Z(n)).
\end{equation}
\end{para}

\begin{para}\label{s1.4}
The isomorphism \eqref{eq1} also has the following application 
to algebraic cycles. 
Let $X$ be a $k$-scheme of finite type.
Write $\uCH_0(X)$ for
the homotopy invariant Nisnevich sheaf with transfers
\[U\mapsto CH_0(X\times_k {k(U)}) \quad (U \text{ smooth connected)}\]
see \cite[Th. 2.2]{motiftate}.
Let $i, j \in \Z$. 
We write $CH_i(X, j)$ for Bloch's homological higher Chow group \cite[1.1]{levine}: if $X$ is equidimensional of dimension $d$, it agrees with the group $CH^{d-i}(X,j)$ of \cite{bloch}.
\end{para}

\begin{theorem}\label{prop:motivichom}
Suppose $\car k=0$.
Let $X_1, \dots, X_n$ be quasi-projective $k$-schemes.
Put $X=X_1 \times \dots \times X_n$.
For any $r \geq 0$, we have an isomorphism
\begin{align}\label{eq:cycle}
  K(k; \uCH_0(X_1), \dots, \uCH_0(X_n), \G_m, \dots, \G_m)
   \iso CH_{-r}(X, r),
\end{align}
where we put $r$ copies of $\G_m$ on the left hand side.\footnote{
Using recent results of Shane Kelly \cite{kelly}, one may remove the characteristic zero hypothesis if we invert the exponential characteristic of $k$. 
}
\end{theorem}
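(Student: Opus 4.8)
The plan is to substitute the isomorphism \eqref{eq1} into the right-hand side of \eqref{eq:cycle} and to recognise the resulting group in $\DM_-^\eff$ as a motivic Borel--Moore homology group, which in characteristic zero is one of Bloch's higher Chow groups by Voevodsky's comparison theorem. Resolution of singularities (hence the hypothesis $\car k=0$) will enter only through the formalism of motives with compact supports for singular schemes.

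First I would apply \eqref{eq1} to the family consisting of $\uCH_0(X_1),\dots,\uCH_0(X_n)$ followed by $r$ copies of $\G_m$ --- all homotopy invariant Nisnevich sheaves with transfers, the first $n$ by \cite[Th.~2.2]{motiftate} --- to rewrite the left-hand side of \eqref{eq:cycle} as $\Hom_{\DM_-^\eff}(\Z,\ \uCH_0(X_1)[0]\otimes\dots\otimes\uCH_0(X_n)[0]\otimes \G_m[0]^{\otimes r})$. Since $\G_m[0]\cong\Z(1)[1]$ in $\DM_-^\eff$, the last $r$ tensor factors contract to $\Z(r)[r]$.

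Next I would bring in the motive with compact supports $M^c(X_i)$, an object of $\DM_{gm}^\eff$ because $X_i$ is quasi-projective and $\car k=0$, together with the identification $\underline{H}_0(M^c(X_i))\cong\uCH_0(X_i)$ of homotopy invariant Nisnevich sheaves with transfers: both sides are objects of the heart of the homotopy $t$-structure, hence determined by their sections on finitely generated field extensions $F/k$, and there both give $CH_0(X_{i,F})$ (the left-hand side by the $0$-cycle case of the Bloch/Borel--Moore comparison recalled below, together with \cite[Th.~2.2]{motiftate}). Since $M^c(X_i)$ is connective for the homotopy $t$-structure, the truncation map $M^c(X_i)\to\uCH_0(X_i)[0]$ has fibre in $\DM_{\ge 1}^\eff$. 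Using the K\"unneth isomorphism $M^c(X)\cong M^c(X_1)\otimes\dots\otimes M^c(X_n)$ (valid over the perfect field $k$) and tensoring the truncation maps, I obtain a morphism $M^c(X)\to\uCH_0(X_1)[0]\otimes\dots\otimes\uCH_0(X_n)[0]$ whose fibre $F$ still lies in $\DM_{\ge 1}^\eff$, because the non-negative part of the homotopy $t$-structure is stable under tensor products. Twisting by $\Z(r)[r]\in\DM_{\ge 0}^\eff$ keeps $F\otimes\Z(r)[r]$ in $\DM_{\ge 1}^\eff$. As $\Spec k$ carries no higher Nisnevich cohomology, the hypercohomology spectral sequence degenerates and gives $\Hom_{\DM_-^\eff}(\Z,N)=\underline{H}_0(N)(k)$ for every $N$; applied to $F\otimes\Z(r)[r]$ and to $(F\otimes\Z(r)[r])[1]$ this shows that $F$ contributes nothing to the long exact sequence, so that
\[
\Hom_{\DM_-^\eff}(\Z,\ \uCH_0(X_1)[0]\otimes\dots\otimes\uCH_0(X_n)[0]\otimes\Z(r)[r])\ \cong\ \Hom_{\DM_-^\eff}(\Z,\ M^c(X)(r)[r]).
\]

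Finally I would identify the last group: $\Hom_{\DM_-^\eff}(\Z,M^c(X)(r)[r])\cong\Hom_{\DM}(\Z(-r)[-r],M^c(X))$ by full faithfulness of $\DM_-^\eff\hookrightarrow\DM$ (the cancellation theorem), and this is the motivic Borel--Moore homology group $H^{BM}_{-r}(X,\Z(-r))$, which equals Bloch's $CH_{-r}(X,r)$ by Voevodsky's theorem in characteristic zero (see \cite[Lecture~19]{mvw}); composing the isomorphisms yields \eqref{eq:cycle}. I expect the main obstacle to be the third step --- establishing and correctly referencing the sheaf identification $\underline{H}_0(M^c(X_i))\cong\uCH_0(X_i)$ for possibly singular $X_i$, which is precisely where resolution of singularities is used (and where, following Kelly \cite{kelly}, one would instead use de Jong alterations and invert the exponential characteristic). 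The connectivity bookkeeping is routine but must be carried out with care, and one should also check that all the identifications are natural enough for the composite to be the map implicit in \eqref{eq:cycle}.
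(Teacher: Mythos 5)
Your proposal follows essentially the same route as the paper: rewrite the left side via Theorem \ref{tmain} as a $\Hom$-group in $\DM_-^\eff$, replace each $\uCH_0(X_i)[0]$ by $M^c(X_i)$ using $\uCH_0(X_i)=\underline{H}_0(M^c(X_i))$ (\cite[Th.\ 2.2]{motiftate}) together with the right-exactness of $\otimes$ (Lemma \ref{l2}) and the K\"unneth formula for $M^c$, contract $\G_m[0]^{\otimes r}$ to $\Z(r)[r]$, and invoke the comparison with higher Chow groups. Two small points: the last identification is cleanest cited as \cite[Prop.\ 4.2.9]{voetri}, which is stated directly in $\DM_-^\eff$ for quasi-projective (possibly singular) $X$ and avoids the detour through $\DM$ and cancellation (the reference to \cite{mvw} is for smooth schemes only); and the sheaf identification $\underline{H}_0(M^c(X_i))\cong\uCH_0(X_i)$ need not be re-derived by comparing sections over function fields — it is exactly the content of \cite[Th.\ 2.2]{motiftate}, which the paper quotes directly.
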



\begin{para}
When $X_1,\dots,X_n$ are smooth projective,\footnote{
In this case, Theorem \ref{prop:motivichom} is valid 
in any characteristic, see Remark \ref{rem:sm-proj}.
}
special cases of \eqref{eq:cycle} were previously known.
The case $r=0$ was proved by Raskind and Spie\ss\ \cite[Corollary 4.2.6]{rs},
and the case $n=1$ was proved by Akhtar \cite[Theorem 6.1]{akhtar}
(without assuming $k$ to be perfect).
The extension to non smooth projective varieties is new and nontrivial.
\end{para}

\begin{para} Theorem \ref{prop:motivichom} is proven using the Borel-Moore motivic homology introduced in \cite[\S 9]{fv}. We also have a variant which involves motivic homology, see Theorem \ref{prop:higherchow}.
Here is an application. Let $C_1,C_2$ be two smooth connected curves over our perfect field $k$, and put $S=C_1\times C_2$. Assume that $C_1$ and $C_2$ both have a $0$-cycle of degree $1$. Then  the special case $n=2,r=0$ of Theorem \ref{prop:higherchow} gives an isomorphism\[\Z\oplus \Alb_S(k)\oplus K(k;A_1,A_2)\iso H_0(S,\Z)\]
where $A_i$ is the Albanese variety of $C_i$  (compare \cite[Th. 3.4.2]{voetri}), $\Alb_S=A_1\times A_2$ is the Albanese variety of $S$ and the right hand side in this case is Suslin homology \cite{sus-voe2}, see \S \ref{s8.1}. 

Since Somekawa's groups are defined in an explicit manner,
one can sometimes determine the structure of $K(k; A_1, A_2)$ completely.
For instance, when $k$ is finite, we have $K(k;A_1,A_2)=0$ by \cite{knote}.
This immediately implies the bijectivity of the generalized Albanese map
\[a_S:H_0(S,\Z)^{\deg=0}\to \Alb_S(k)\]
of Ramachandran and Spie\ss-Szamuely \cite{spsz}. Note that $a_S$ is not bijective for a smooth projective surface $S$ in general, see \cite[Prop. 9]{ks2}.
\end{para}

\begin{para}
We conclude this introduction
by pointing out the main difficulty and main ideas
in the proof of \eqref{eq1}.

The definitions of the two sides of \eqref{eq1} are
quite different:
the left hand side is based on \emph{Weil reciprocity},
while the right hand side is based on \emph{homotopy invariance}. Thus it is not even obvious how to define a map \eqref{eq1} to start with. Our solution is to write both sides as quotients of a common larger group, and to prove that one quotient factors through the other.
This provides a map \eqref{eq1} which is automatically surjective (Theorem \ref{p1}).

The proof of its injectivity 
turns out to be much more difficult.
We need  to find many relations
coming from Weil reciprocity.
Our main idea,
inspired by \cite[Theorem 1.4]{somekawa}
(recalled in \S \ref{para:kato}),
is to use the \emph{Steinberg relation}
to create Weil reciprocity relations.
To show that this  provides us with
enough such relations,
we need to carry out a heavy computation of symbols in \S \ref{sect:isom}.
\end{para}

\subsection*{Acknowledgements} 
Work in this direction had been done previously by Mo\-chi\-zu\-ki
\cite{mochi}. 
The surjective map \eqref{eq1}
was announced in \cite[Remark 10 (b)]{sp-ya}.
This research was started by the first author, who wrote
the first part of this paper \cite{somprem}.
The collaboration began when the second author visited
the Institute of Mathematics of Jussieu in October 2010.
Somehow, the research accelerated
after the earthquake on March 11, 2011 in Japan.
We wish to acknowledge the pleasure of such a fruitful collaboration, along these
circumstances.

We acknowledge the depth of the ideas of Milnor, Kato, Somekawa, Suslin and Voevodsky.
Especially we are impressed by the relevance of the Steinberg relation in this story.

\section{Mackey functors and presheaves with transfers}\label{s:mack}

\begin{para}A \emph{Mackey functor} over $k$ is a contravariant additive (i.e., commuting with
coproducts) functor $A$ from the category of \'etale $k$-schemes to the category of abelian
groups, provided with a covariant structure verifying the following exchange condition: if 
\[\begin{CD}
Y'@>f'>> Y\\
@Vg'VV @VgVV\\
X'@>f>> X
\end{CD}
\]
is a cartesian square of \'etale $k$-schemes, then the diagram
\[\begin{CD}
A(Y')@>{f'}^*>> A(Y)\\
@Vg'_*VV @Vg_*VV\\
A(X')@>f^*>> A(X)
\end{CD}
\]
commutes. Here, $^*$ denotes the contravariant structure while $_*$ denotes the covariant
structure. The Mackey functor $A$ is \emph{cohomological} if we further have
\[f_* f^* = \deg(f)\]
for any $f:X'\to X$, with $X$ connected. We denote by $\Mack$ the abelian category of Mackey
functors, and by $\Mack_c$ its full subcategory of cohomological Mackey functors.
\end{para}

\begin{para}Classically \cite[(1.4)]{thevenaz}, a Mackey functor may be viewed as a
contravariant additive functor on the category $\Span$ of ``spans" on \'etale $k$-schemes,
defined as follows: objects are \'etale $k$-schemes. A morphism from $X$ to $Y$ is an
equivalence class of diagram (span)
\begin{equation}\label{eq3}
X\yb{g} Z\by{f} Y.
\end{equation}

Composition of spans is defined via fibre product in an obvious manner. If $A$ is a Mackey
functor, the corresponding functor on $\Span$ has the same value on objects, while its value on
a span \eqref{eq3} is given by $g_*f^*$.

Note that $\Span$ is a preadditive category: one may add (but not subtract) two morphisms
with same source and target. We may as well view a Mackey functor as an additive functor on the
associated additive category $\Z\Span$.
\end{para}

\begin{para}Let $\Cor$ be Voevodsky's category of finite correspondences on smooth
$k$-schemes, denoted by $SmCor(k)$ in \cite[\S 2.1]{voetri}. The category $\Z\Span$ is
isomorphic to its full subcategory consisting of smooth $k$-schemes of dimension $0$
(= \'etale $k$-schemes). In particular, any presheaf with transfers in the sense of Voevodsky
\cite[Def. 3.1.1]{voetri} restricts to a Mackey functor over $k$. By \cite[Cor. 3.15]{voepre},
the restriction of a \emph{homotopy invariant} presheaf with transfers yields a cohomological
Mackey functor. In other words, we have exact functors
\begin{align}
\rho:\PST&\to \Mack\label{eq4}\\
\rho:\HI&\to \Mack_c\label{eq5}
\end{align}
where $\PST$ denotes the category of presheaves with transfers (contravariant additive
functors from $\Cor$ to abelian groups) and $\HI$ is its full subcategory consisting of
homotopy invariant presheaves with transfers.
\end{para}

\begin{para} There is a tensor product of Mackey functors $\oo^M$, originally defined by L. G.
Lewis (unpublished): it extends naturally the symmetric
monoidal structure $(X,Y)\mapsto X\times_K Y$ on $\Z\Span$ via the additive Yoneda embedding
(see \S \ref{A!}). If either $A$ or $B$ is cohomological, $A\oo^M B$ is cohomological. 
This tensor product is the same as the one defined in \cite[\S 5]{decomposable} and
\cite{knote}: this follows from
\eqref{eqA.2} and the fact that $\Z\Span$ is rigid, all objects being self-dual
(indeed, $\Z\Span$ is canonically isomorphic to the category of Artin Chow motives with
integral coefficients).

For the reader's convenience, we recall the definition of $\oo^M$.
Let $A_1, \dots, A_n$ be Mackey functors.
For any \'etale $k$-scheme $X$,
we define 
\[ (A_1\oo^M\dots \oo^M A_n)(X) :=
[\bigoplus_{Y \to X} A_1(Y) \otimes \dots \otimes A_n(Y)]/R,
\] 
where $Y \to X$ runs through all finite \'etale morphisms,
and $R$ is the subgroup generated by all elements 
of the form
\[ a_1 \otimes \dots \otimes f_*(a_i) \otimes \dots a_n
 -f^*(a_1) \otimes \dots \otimes a_i \otimes \dots f^*(a_n),
\]
where $Y_1 \overset{f}{\to} Y_2 \to Y$ is a tower of \'etale morphisms,
$1 \leq i \leq n$,
$a_i \in A_i(Y_1)$ and $a_j \in A_j(Y_2) ~(j=1, \dots, i-1, i+1, \dots, n)$.
\end{para}

\begin{para}
There is a tensor product on presheaves with transfers defined exactly in the same way \cite[p.
206]{voetri}.  
\end{para}

\begin{para} By definition, the functor \eqref{eq4} equals $i^*$, where $i$ is the inclusion
$\Z\Span\to \Cor$. This inclusion has a left adjoint $\pi_0$ (scheme of constants). Both
functors
$i$ and
$\pi_0$ are symmetric monoidal: for $\pi_0$, reduce to the case where $k$ is separably closed.
\end{para}

\begin{para}\label{s2.5}  By \S \S \ref{Aadj} and \ref{Acoh}, this implies that \eqref{eq4} is
symmetric monoidal. In other words, if $\sF$ and $\sG$ are presheaves with transfers, then
\begin{equation}\label{eq6}
\rho\sF\oo^M\rho\sG\simeq \rho(\sF\otimes_{\PST}\sG).
\end{equation}
The left hand side is sometimes abbreviated to $\sF \oo^M \sG$.
\end{para}

\begin{para}\label{s2.6} The inclusion functor $\HI\to \PST$ has a left adjoint $h_0$, and the
symmetric  monoidal structure of $\PST$ induces one on $\HI$ via $h_0$. In other words, if
$\sF,\sG\in \HI$, we define
\begin{equation}\label{eq8}
\sF\otimes_{\HI}\sG = h_0(\sF\otimes_{\PST} \sG).
\end{equation}

Note that \eqref{eq5} is \emph{not} symmetric monoidal (since it is the restriction of
\eqref{eq4}).
\end{para}

\begin{para}\label{ssurj} For any $\sF\in \PST$, the unit morphism $\sF\to h_0(\sF)$ induces a
surjection
\begin{equation}\label{eq:h0}
\sF(k) \surj h_0(\sF)(k).
\end{equation}
This is obvious from the formula $h_0(\sF)=\Coker(C_1(\sF)\to \sF)$.
\end{para}

\begin{para}\label{s2.8} We shall also need to work with Nisnevich sheaves with transfers. We
denote by $\NST$ the category of Nisnevich sheaves with transfers (objects of $\PST$ which are
sheaves in the Nisnevich topology). 
By \cite[Theorem 3.1.4]{voetri}, the inclusion functor $\NST\to
\PST$ has an exact left adjoint $\sF\mapsto \sF_\Nis$ (sheafification). The category $\NST$
then inherits a tensor product by the formula
\[\sF\otimes_{\NST} \sG=(\sF\otimes_{\PST}\sG)_\Nis.\]

Similarly, we define $\HI_\Nis=\HI\cap \NST$. The sheafification functor restricts to an exact
functor $\HI\to \HI_\Nis$ \cite[Theorem 3.1.11]{voetri}, and $\HI_\Nis$ gets a tensor product by the
formula
\[\sF\otimes_{\HI_\Nis} \sG=(\sF\otimes_{\HI}\sG)_\Nis.\]

To summarize, all functors in the following naturally commutative diagram are symmetric
monoidal:
\begin{equation}\label{eq15}
\begin{CD}
\PST@>\Nis>> \NST\\
@V{h_0}VV @V{h_0^\Nis}VV\\
\HI@>\Nis>> \HI_\Nis.
\end{CD}
\end{equation}
where each functor is left adjoint to the corresponding inclusion.
\end{para}

\begin{para}\label{s3.3} Let $\sF$ be a presheaf on $Sm/k$, and let $\sF_\Nis$ be the
associated Nisnevich sheaf. Then we have an isomorphism
\begin{equation}\label{eq11}
\sF(k)\iso \sF_\Nis(k).
\end{equation}

Indeed, any covering of $\Spec k$ for the Nisnevich topology refines to a trivial covering.
In particular, the functor $\sF\mapsto \sF_\Nis(k)$ is exact.

This applies in particular to a presheaf with transfers and the associated Nisnevich sheaf with
transfers.
\end{para}

\begin{para}\label{mor1} 
Let $\sF_1, \dots, \sF_n \in \HI_\Nis$.
Then \eqref{eq6} yields a canonical isomorphism
\begin{equation}\label{eq7}
(\sF_1\oo^M\dots \oo^M \sF_n)(k)\simeq 
(\sF_1\otimes_{\PST}\dots \otimes_{\PST} \sF_n)(k).
\end{equation}
Composing
\eqref{eq7} with the unit morphism 
$Id\Rightarrow h_0^\Nis$ from \eqref{eq15} and taking
\eqref{eq8} into account, we get a canonical morphism
\begin{equation}\label{eq9}
(\sF_1\oo^M\dots \oo^M \sF_n)(k)
\to (\sF_1\otimes_{\HI_\Nis}\dots \otimes_{\HI_\Nis}\sF_n)(k).
\end{equation}
which is surjective by \S\S \ref{ssurj} and \ref{s3.3}.
\end{para}

\begin{para}\label{fullalb}
If $G$ is a commutative $k$-group scheme whose identity component is a
quasi-projective variety, then $G$ has a canonical structure of 
Nisnevich sheaf with transfers
(\cite[proof of Lemma 3.2]{spsz} completed by \cite[Lemma 1.3.2]{bar-kahn}). 
This applies in particular to semi-abelian varieties
and also to the "full" Albanese scheme \cite{ram} of a smooth variety
(which is an extension of a lattice by a a semi-abelian variety). 
In particular, if $G_1,\dots,G_n$ are such $k$-group schemes,
\eqref{eq9} yields a canonical surjection
\begin{equation}\label{eq9-semiabel}
(G_1\oo^M\dots \oo^MG_n)(k)\to (G_1\otimes_{\HI_\Nis}\dots \otimes_{\HI_\Nis}G_n)(k),
\end{equation}
where the $G_i$ are considered on the left as Mackey functors, 
and on the right as homotopy invariant Nisnevich sheaves with transfers. 
\end{para}

\section{Presheaves with transfers and motives}

\begin{para} The left adjoint $h_0^\Nis$ in \eqref{eq15} ``extends" to a left adjoint $C_*$ of
the inclusion
\[\DM_-^\eff\to D^-(\NST)\]
where the left hand side is Voevodsky's triangulated category of effective motivic complexes
\cite[\S 3, esp. Prop. 3.2.3]{voetri}. 

More precisely, $\DM_-^\eff$ is defined as the full subcategory of objects of $D^-(\NST)$
whose cohomology sheaves are homotopy invariant. The canonical $t$-structure of $D^-(\NST)$
induces a $t$-structure on $\DM_-^\eff$, with heart $\HI_\Nis$. The functor $C_*$ is right
exact with respect to these $t$-structures, and if $\sF\in \NST$, then
$H_0(C_*(\sF))=h_0^\Nis(\sF)$.
\end{para}

\begin{para} The tensor structure of \S \ref{s2.8} on $\NST$ extends to one on $D^-(\NST)$
\cite[p. 206]{voetri}. Via
$C_*$, this tensor structure descends to a tensor structure on $\DM_-^\eff$ \cite[p.
210]{voetri}, which will simply be denoted by
$\otimes$. The relationship between this tensor structure and the one of \S \ref{s2.8} is as
follows: if $\sF,\sG\in \HI_\Nis$, then
\begin{equation}\label{eq10}
\sF\otimes_{\HI_\Nis}\sG = H^0(\sF[0]\otimes \sG[0])
\end{equation}
where $\sF[0],\sG[0]$ are viewed as complexes of Nisnevich sheaves with transfers concentrated
in degree $0$.

We shall need the following lemma, which is not explicit in \cite{voetri}:
\end{para}

\begin{lemma}\label{l2} The tensor product $\otimes$ of $\DM_-^\eff$ is right exact with respect to the homotopy $t$-structure.
\end{lemma}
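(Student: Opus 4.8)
The plan is to reduce the statement to a concrete, checkable property of the tensor product of \emph{representable} sheaves $\Z_{\tr}(X)$ for $X$ smooth. Recall that by Voevodsky's construction, for $X, Y$ smooth the tensor product is computed on representable objects by $\Z_{\tr}(X)\otimes\Z_{\tr}(Y)=\Z_{\tr}(X\times Y)$, and $C_*$ applied to $\Z_{\tr}(X)$ computes the complex $C_*(X)$ whose homology sheaves are the Suslin homology sheaves $h_i^{\Nis}(X)$. Since every object of $\HI_{\Nis}$ is a quotient (as a Nisnevich sheaf with transfers) of a sum of representables, and since the tensor product on $D^-(\NST)$ is computed via termwise tensor of complexes of sums of representables followed by $C_*$-localization, the key point is to understand $H_i$ of $C_*(\Z_{\tr}(X)\otimes\Z_{\tr}(Y))=C_*(\Z_{\tr}(X\times Y))$ in negative degrees.

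First I would recall the precise mechanism by which $\otimes$ on $\DM_-^{\eff}$ is defined: one takes a complex in $D^-(\NST)$, resolves it by (a complex of sums of) representable sheaves $\Z_{\tr}(U)$, tensors termwise using $\Z_{\tr}(U)\otimes\Z_{\tr}(V)=\Z_{\tr}(U\times V)$, and then applies $C_*$. Right exactness with respect to the homotopy $t$-structure amounts to the statement: if $\sF, \sG\in\HI_{\Nis}$ sit in degree $0$, then $H_i(\sF[0]\otimes\sG[0])=0$ for $i<0$. Equivalently, I must show that tensoring a right-exact functor with another right-exact functor and then applying the right-exact functor $C_*$ keeps homology concentrated in degrees $\le 0$ — modulo the subtlety that termwise tensor of complexes of representables introduces no new negative-degree homology after $C_*$. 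So I would choose (possibly infinite, bounded-above) resolutions $P_\bullet\to\sF$, $Q_\bullet\to\sG$ by sums of representables concentrated in degrees $\ge 0$; then $P_\bullet\otimes Q_\bullet$ is again concentrated in degrees $\ge 0$, and $C_*(P_\bullet\otimes Q_\bullet)$ is a complex whose underlying double complex has $C_*$ applied in each bidegree. Since $C_*$ is right exact for the homotopy $t$-structure (it is right exact as a functor to $\DM_-^{\eff}$, with $H_0 C_*=h_0^{\Nis}$), each $C_*(P_p\otimes Q_q)=C_*(\Z_{\tr}(\text{smooth}))$ has homology in degrees $\ge 0$, so the associated total complex / spectral sequence has $E^1$ (or $E^2$) terms only in the region where total degree $\ge 0$ combined with the resolution degrees $\ge 0$; a bookkeeping argument on the two spectral sequences of the double complex then forces $H_i=0$ for $i<0$.

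The main obstacle, and the place requiring genuine care, is the interaction between the termwise tensor product of unbounded-below... no — bounded-above complexes and $C_*$: one must ensure that $C_*(P_\bullet\otimes Q_\bullet)$ really does compute $\sF[0]\otimes\sG[0]$ in $\DM_-^{\eff}$ (this is where one invokes that $C_*$ is a localization and that $\Z_{\tr}(X)\otimes\Z_{\tr}(Y)=\Z_{\tr}(X\times Y)$), and that the Suslin complex $C_*$ of a representable, while unbounded below as a complex of sheaves, has homology \emph{sheaves} concentrated in degrees $\ge 0$ — which is exactly the defining property that $C_*$ lands in $\DM_-^{\eff}$ with the correct $t$-exactness. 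I would therefore structure the argument as: (i) reduce via resolutions by representables to the case $\sF=\Z_{\tr}(X)$, $\sG=\Z_{\tr}(Y)$; (ii) in that case $\sF[0]\otimes\sG[0]=M(X\times Y)$-type object $=C_*(\Z_{\tr}(X\times Y))$, which by construction of $\DM_-^{\eff}$ has homology sheaves in degrees $\ge 0$; (iii) run the hyperhomology spectral sequence of the double complex to propagate this to the general case. One must be slightly careful that the resolutions can be taken degreewise; this is standard since $\NST$ has enough representables and $D^-(\NST)$ admits such resolutions. I expect step (iii)'s spectral sequence convergence (bounded-above double complex, homology bounded below in each column) to be the only genuinely technical point, and it is routine homological algebra.
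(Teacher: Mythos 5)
Your proposal follows essentially the same route as the paper's proof: reduce by resolutions to sums of representables, use that the tensor of representables is again representable (so $C_*(L(X))\otimes C_*(L(Y))\simeq C_*(L(X\times Y))$ by monoidality of $C_*$), and observe that the Suslin complex of a representable is concentrated in non-negative homological degrees, with the spectral-sequence bookkeeping propagating this to general complexes. The paper compresses this to one paragraph (invoking "the canonical left resolutions of loc.\ cit." and the formula for $C_*$) and works directly with arbitrary $C,D$ bounded above rather than first reducing to the heart, but the substance is identical.
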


\begin{proof} By definition,
\[C\otimes D = C_*(C\oo^L D)\]
for $C,D\in \DM_-^\eff$, where $\oo^L$ is the tensor product of $D^-(\NST)$ defined in
\cite[p. 206]{voetri}. We want to show that, if $C$ and $D$ are concentrated in degrees $\le
0$, then so is $C\otimes D$. Using the canonical left resolutions of loc. cit., it is enough to
do it for $C$ and $D$ of the form $C_*(L(X))$ and $C_*(L(Y))$ for two smooth schemes $X,Y$.
Since $C_*$ is symmetric monoidal, we have
\[C_*(L(X))\otimes C_*(L(Y))\osi C_*(L(X)\oo^L L(Y)) = C_*(L(X\times Y))\]
and the claim is obvious in view of the formula for $C_*$ \cite[p. 207]{voetri}.
\end{proof}

\begin{para} Let $C\in \DM_-^\eff$. For any $X\in Sm/k$ and any $i\in\Z$, we have
\[\bH^i_\Nis(X,C)\simeq \Hom_{\DM_-^\eff}(M(X),C[i])\]
where $M(X)=C_*(L(X))$ is the motive of $X$ computed in $\DM_-^\eff$ (cf. \cite[Prop.
3.2.7]{voetri}).

Specializing to the case $X=\Spec k$ ($M(X)=\Z$) and taking \S \ref{s3.3} into account, we get
\begin{equation}\label{eq12}
\Hom_{\DM_-^\eff}(\Z,C[i])\simeq H^i(C)(k).
\end{equation}

Combining \eqref{eq10}, \eqref{eq11} and \eqref{eq12}, we get:
\end{para}

\begin{lemma} \label{l1} Let $\sF_1,\dots,\sF_n$ be homotopy invariant Nisnevich sheaves with
transfers. Then we have a canonical isomorphism
\begin{equation}\label{eq:dm}
(\sF_1\otimes_{\HI_\Nis}\dots \otimes_{\HI_\Nis}\sF_n)(k) \simeq
\Hom_{\DM_-^\eff}(\Z,\sF_1[0]\otimes\dots \otimes\sF_n[0]).
\end{equation}
\end{lemma}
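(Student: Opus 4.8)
The plan is to chain together the three isomorphisms that the preceding paragraphs have assembled, iterated over the $n$-fold tensor product. First I would reduce to the two-fold case by induction, or equivalently observe that each of the ingredient identities is compatible with nesting of $\otimes$; the key structural fact making this work is that all the relevant tensor products ($\oo^M$, $\otimes_{\PST}$, $\otimes_{\HI_\Nis}$, and $\otimes$ on $\DM_-^\eff$) are symmetric monoidal and mutually compatible via the left-adjoint functors in the commutative square \eqref{eq15} and the extension to $C_*$.

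Concretely, the chain I would write down is as follows. Start from the right-hand side $\Hom_{\DM_-^\eff}(\Z, \sF_1[0]\otimes\dots\otimes\sF_n[0])$. Apply \eqref{eq12} with $C = \sF_1[0]\otimes\dots\otimes\sF_n[0]$ and $i=0$ to identify this with $H^0(\sF_1[0]\otimes\dots\otimes\sF_n[0])(k)$. Next, use the $n$-fold version of \eqref{eq10}, namely $\sF_1\otimes_{\HI_\Nis}\dots\otimes_{\HI_\Nis}\sF_n = H^0(\sF_1[0]\otimes\dots\otimes\sF_n[0])$, to rewrite this as $(\sF_1\otimes_{\HI_\Nis}\dots\otimes_{\HI_\Nis}\sF_n)(k)$. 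That already produces \eqref{eq:dm}, so strictly speaking \eqref{eq11} and \eqref{eq:h0} are not needed for this particular statement — they enter in \S\ref{mor1} to build the surjection \eqref{eq9}, not the isomorphism \eqref{eq:dm} itself. I would double-check that the two-fold identity \eqref{eq10} does iterate: because $\otimes$ on $\DM_-^\eff$ is right exact for the homotopy $t$-structure (Lemma \ref{l2}) and $\HI_\Nis$ is its heart, the functor $H^0$ is monoidal on connective objects, so $H^0((\sF_1[0]\otimes\dots\otimes\sF_{n-1}[0])\otimes\sF_n[0]) \cong H^0(\sF_1[0]\otimes\dots\otimes\sF_{n-1}[0])\otimes_{\HI_\Nis}\sF_n$, and induction closes the argument.

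The one genuinely delicate point — and the step I would flag as the main obstacle — is verifying that $H^0$ (equivalently $H_0$, equivalently $h_0^\Nis$ applied to the underlying complex) is compatible with iterated tensoring, i.e. that there is no higher-$\mathrm{Tor}$ obstruction spoiling the naive formula $H^0(A\otimes B) \cong H^0(A)\otimes_{\HI_\Nis} H^0(B)$ when $A,B$ are connective. This is exactly what right-exactness of $\otimes$ on $\DM_-^\eff$ (Lemma \ref{l2}) buys us: if $A,B$ are concentrated in degrees $\le 0$ then so is $A\otimes B$, and $H^0$ of a connective object only sees the truncation $\tau_{\ge 0}$; combined with right-exactness one gets $H^0(A\otimes B) \cong H^0(H^0(A)[0]\otimes H^0(B)[0]) = H^0(A)\otimes_{\HI_\Nis} H^0(B)$. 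So the lemma is essentially a formal consequence of Lemma \ref{l2} together with \eqref{eq10} and \eqref{eq12}; I would present it in that order, being careful that the isomorphisms are canonical (natural in the $\sF_i$) so that they can be compared later with the map induced by \eqref{eq9}.
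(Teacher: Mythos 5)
Your proposal is correct and follows essentially the same route as the paper, which simply ``combines \eqref{eq10}, \eqref{eq11} and \eqref{eq12}'' to obtain \eqref{eq:dm}. The one point you develop that the paper leaves implicit is the justification that \eqref{eq10} iterates to the $n$-fold tensor product: your argument — that right-exactness of $\otimes$ (Lemma~\ref{l2}) keeps connective objects connective, and that the cone of $A\to H^0(A)[0]$ is concentrated in degrees $\le -2$, so tensoring with a connective $B$ does not affect $H^0$ — is the correct reason, and it is a worthwhile clarification. Your remark that \eqref{eq11} is not needed as a separate step is also fair, since it is already absorbed into the derivation of \eqref{eq12}.
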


\begin{para}\label{mor2} 
Summarizing, 
for any $\sF_1,\dots,\sF_n \in \HI_\Nis$ 
we get a surjective homomorphism 
\begin{equation}\label{eq2}
(\sF_1\oo^M\dots \oo^M \sF_n)(k)\to \Hom_{\DM_-^\eff}(\Z,\sF_1[0]\otimes\dots\otimes \sF_n[0]).
\end{equation}
by composing
\eqref{eq7}, 
\eqref{eq:h0}, 
\eqref{eq8}, 
\eqref{eq11}
and \eqref{eq:dm}.

\end{para}

\section{Presheaves with transfers and local symbols}

\begin{para}\label{s4.2} Given a presheaf with transfers $\sG$, recall from \cite[p.
96]{voepre} the pre\-sheaf with transfers $\sG_{-1}$ defined by the formula
\begin{equation}\label{eq:defofminusone}
\sG_{-1}(U) = \Coker\left(\sG(U\times \A^1)\to \sG(U\times (\A^1-\{0\}))\right).
\end{equation}

Suppose that $\sG$ is homotopy invariant. Let $X\in Sm/k$ (connected),  $K=k(X)$ and $x\in X$ be
a point of codimension $1$. By \cite[Lemma 4.36]{voepre}, there is a canonical isomorphism
\begin{equation}\label{eq14a}
\sG_{-1}(k(x))\simeq H^1_x(X,\sG_\Zar)
\end{equation}
yielding a
canonical map
\begin{equation}\label{eq14}
\partial_x:\sG(K)\to \sG_{-1}(k(x)).
\end{equation}
\end{para}

The following lemma follows from the construction of the isomorphisms \eqref{eq14a}. It is part
of the general fact that $\sG$ defines a cycle module in the sense of Rost (cf. \cite[Prop.
5.4.64]{deglise}).

\begin{lemma}\label{l4} a) Let $f:Y\to X$ be a dominant morphism, with
$Y$ smooth and connected. Let $L=k(Y)$, and let $y\in Y^{(1)}$ be such that $f(y)=x$. Then the
diagram
\[\begin{CD}
\sG(L)@>(\partial_y)>> \sG_{-1}(k(y))\\
@Af^*AA @Ae f^*AA\\
\sG(K)@>\partial_x>> \sG_{-1}(k(x))
\end{CD}\]
commutes, where $e$ is the ramification index of $v_y$ relative to $v_x$.\\
b) If $f$ is finite surjective, the diagram
\[\begin{CD}
\sG(L)@>(\partial_y)>> \displaystyle\bigoplus_{y\in f^{-1}(x)} \sG_{-1}(k(y))\\
@Vf_*VV @Vf_*VV\\
\sG(K)@>\partial_x>> \sG_{-1}(k(x))
\end{CD}\]
commutes.\qed
\end{lemma}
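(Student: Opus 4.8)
The plan is to reduce everything to the local picture and then invoke the compatibility of the isomorphisms \eqref{eq14a} with pullback and transfer. First I would localize: both statements only involve the codimension-one points $x \in X$, $y \in Y$ and their residue fields, so I may replace $X$ by the local scheme $\Spec \sO_{X,x}$ and $Y$ by the semilocalization of $Y$ at the points of $f^{-1}(x)$ lying in $Y^{(1)}$. By the identification \eqref{eq14a}, the map $\partial_x$ is the boundary map in the long exact sequence of local cohomology with support on $x$ for the Zariski sheaf $\sG_\Zar$, namely $H^0(\Spec K, \sG_\Zar) \to H^1_x(\Spec \sO_{X,x}, \sG_\Zar)$, and similarly for $\partial_y$. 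So the content is that these boundary maps commute with the maps induced by $f$ on local cohomology, up to the stated multiplicities.

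For part (a), with $f: Y \to X$ dominant and $y \in Y^{(1)}$ over $x$, the morphism of pairs $(\Spec\sO_{Y,y}, y) \to (\Spec\sO_{X,x}, x)$ induces a pullback on local cohomology $H^1_x(\Spec\sO_{X,x},\sG_\Zar) \to H^1_y(\Spec\sO_{Y,y},\sG_\Zar)$ compatible with the boundary maps and with $f^*$ on generic points; the only point is to track where the ramification index $e = e(v_y/v_x)$ enters. Under the isomorphism \eqref{eq14a}, $H^1_x \simeq \sG_{-1}(k(x))$, the class of a uniformizer at $x$ pulls back to $e$ times the class of a uniformizer at $y$ (this is the usual behaviour of the "residue along a parameter" construction, where $\sG_{-1}$ plays the role of the twisted coefficient sheaf, exactly as in Rost's cycle modules, cf. \cite[Prop. 5.4.64]{deglise}); hence the composite picks up the factor $e$. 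Concretely, one chooses uniformizers $\pi_x, \pi_y$ with $\pi_x = u\,\pi_y^e$ for a unit $u$, expresses the isomorphism \eqref{eq14a} via the $\A^1 \smallsetminus\{0\}$-construction \eqref{eq:defofminusone} using $\pi_x$ resp. $\pi_y$ as coordinate, and compares — the change of coordinate $t \mapsto u t^e$ on $\A^1\smallsetminus\{0\}$ multiplies the resulting class in $\sG_{-1}$ by $e$ because $\sG_{-1}$ is homotopy invariant and the degree of $t \mapsto t^e$ is $e$.

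For part (b), with $f$ finite surjective, I would use functoriality of local cohomology for a finite morphism in the covariant direction: pushforward along the finite map $\Spec\sO_{Y,y} \to \Spec\sO_{X,x}$ (summed over $y \in f^{-1}(x)$) gives $\bigoplus_y H^1_y(\Spec\sO_{Y,y},\sG_\Zar) \to H^1_x(\Spec\sO_{X,x},\sG_\Zar)$, and this transfer is compatible with the boundary maps and with $f_*: \sG(L) \to \sG(K)$ on the generic fibre — here no multiplicity appears because the transfer on $\sG_{-1}(k(y)) \simeq H^1_y$ is defined precisely so as to be compatible with $f_*$ on $\sG$ itself, this being built into Voevodsky's transfer structure on $\sG_{-1}$ (the contraction of a presheaf with transfers is again one, \cite[p. 96]{voepre}). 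The main obstacle is the first part: one has to unwind the definition of \eqref{eq14a} carefully enough to see exactly why the ramification index $e$, and not $1$ or $[k(y):k(x)]$, is the correct factor, i.e. to pin down the normalization in \cite[Lemma 4.36]{voepre}. Once that local computation is in hand — and it is genuinely local, so one may assume $X, Y$ are spectra of discrete valuation rings — both diagrams commute by naturality of local cohomology (excision, and functoriality for flat pullback resp. finite pushforward), which is the routine part. I expect this verification to occupy at most a page; the statement itself is essentially Rost's axiom (R3a)/(R3b) for the cycle module attached to $\sG$, so an alternative (shorter) route is simply to cite \cite[Prop. 5.4.64]{deglise} for the cycle-module structure and deduce the two compatibilities from Rost's axioms, but since the lemma advertises that it "follows from the construction of the isomorphisms \eqref{eq14a}", giving the direct local argument is preferable.
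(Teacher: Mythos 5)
The paper gives no proof here: the \verb|\qed| after the statement is the whole argument, and the real content is in the sentence immediately before the lemma, which says that it ``follows from the construction of the isomorphisms \eqref{eq14a}'' and is ``part of the general fact that $\sG$ defines a cycle module in the sense of Rost (cf.\ \cite[Prop.\ 5.4.64]{deglise}).'' Your proposal is aligned with this: you identify the statement as Rost's axioms (R3a)/(R3b), you offer the citation to \cite[Prop.\ 5.4.64]{deglise} as the short route, and you sketch the unwinding of \cite[Lemma 4.36]{voepre} as the direct route. So there is no divergence of approach.

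One point is glossed over. Your explanation that the change of coordinate $t\mapsto u\,t^e$ multiplies the class in $\sG_{-1}$ by $e$ ``because $\sG_{-1}$ is homotopy invariant and the degree of $t\mapsto t^e$ is $e$'' is pointing in the right direction but is not quite an argument: ``degree'' in the naive sense doesn't act on $\sG_{-1}$, and homotopy invariance alone does not disentangle $e$ from $[k(y):k(x)]$, which is exactly what you correctly flag as the crux. The precise statement is that $t\mapsto u\,t^e$ on $\A^1\smallsetminus\{0\}$ induces multiplication by $e$ on the reduced motive $\Z(1)[1]\simeq\G_m[0]$ (the unit $u$ contributes nothing since $\End_{\DM}(\Z(1))=\Z$ and the map has degree $e$ on any realization), and then $\sG_{-1}\simeq\uHom(\G_m,\sG)$ (Proposition~\ref{p2}) picks this up by precomposition. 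You explicitly acknowledge that this normalization needs to be pinned down, so the gap is one you already see; just be aware that ``by homotopy invariance and degree'' is a placeholder for that identification rather than the identification itself. Part~(b) as you describe it is fine: the transfer on $\sG_{-1}$ is built in because contraction preserves the transfer structure, and compatibility with the boundary map is then a naturality statement, with no multiplicity, as you say.
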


\begin{prop}\label{p2} Let $\sG\in \HI_\Nis$. There is a canonical isomorphism
\[\sG_{-1} = \uHom(\G_m,\sG).\]
\end{prop}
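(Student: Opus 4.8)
The plan is to construct the isomorphism $\sG_{-1}\iso \uHom(\G_m,\sG)$ as an instance of the internal Hom in $\HI_\Nis$ (or $\NST$) evaluated against $\G_m$, and to identify it sheaf-theoretically via the defining formula \eqref{eq:defofminusone}. First I would recall that, because $\G_m = \Z(1)[1]$ in $\DM_-^\eff$ and the Tate object is invertible on the relevant subcategory, the functor $\sG\mapsto \uHom(\G_m,\sG)$ makes sense on $\HI_\Nis$ and is computed by the presheaf $U\mapsto \Hom_{\DM_-^\eff}(M(U)\otimes\G_m,\sG[0])$, or equivalently $\bH^0_\Nis(U\times\G_m,\sG)/\bH^0_\Nis(U,\sG)$ after splitting off the unit section. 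Then I would match this with \eqref{eq:defofminusone}: using homotopy invariance of $\sG$, $\bH^0_\Nis(U\times\A^1,\sG)=\sG(U\times\A^1)=\sG(U)$, and the cokernel presentation $\sG_{-1}(U)=\Coker(\sG(U\times\A^1)\to\sG(U\times(\A^1-\{0\})))$ becomes exactly the ``reduced'' part of $\sG$ evaluated on $\G_m$. The key input making this precise is the Gysin/localization computation that for a homotopy invariant Nisnevich sheaf with transfers the Nisnevich cohomology of $U\times\G_m$ in degree $0$ is just $\sG(U\times\G_m)$ and the splitting $M(\G_m)\simeq \Z\oplus\Z(1)[1]$ is compatible with the $\A^1$-localization sequence $\sG(U\times\A^1)\to\sG(U\times(\A^1-0))$.

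The steps, in order. (1) Show $\G_m = \Z(1)[1]$ is rigid in $\DM_-^\eff$ in the weak sense needed, so that $\uHom(\G_m,\sG)$ exists as an object of $\HI_\Nis$ and is represented by $U\mapsto \Hom_{\DM_-^\eff}(M(U\times\G_m)/M(U),\sG[0])$; here I would cite Voevodsky's cancellation-type results or simply the computation of $M(\G_m)$ and the adjunction between $\otimes\G_m$ and $\uHom(\G_m,-)$ on $\DM_-^\eff$. (2) Using \eqref{eq12} and homotopy invariance, rewrite $\Hom_{\DM_-^\eff}(M(U\times\G_m),\sG[0])$ as $\sG(U\times\G_m)=\sG(U\times(\A^1-0))$ (note $\G_m$ has dimension $1$, so only $H^0$ contributes), and the unit-section splitting subtracts off $\sG(U\times\A^1)=\sG(U)$; the remaining quotient is exactly $\sG_{-1}(U)$ by \eqref{eq:defofminusone}. (3) Check naturality of this identification in $U$ and with respect to transfers, so that it upgrades to an isomorphism of objects of $\HI_\Nis$, not merely of presheaves of abelian groups; the transfer compatibility follows because every map in sight (pullback along $U\times\G_m\to U$, the unit section, sheafification) is a morphism in $\NST$. (4) Finally, verify that the resulting isomorphism is the canonical one, i.e. independent of the coordinate on $\G_m$ — this uses that two coordinates differ by an element of $\G_m(k)=k^\times$ acting trivially on the quotient $\sG_{-1}$, or one invokes the functoriality already built into $\uHom$.

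The main obstacle I expect is step (2)–(3): making the passage from the $D^-(\NST)$-level computation to an honest identification at the level of $\HI_\Nis$ compatible with \emph{all} the structure (transfers, Nisnevich sheafification, and the $t$-structure). The subtlety is that $\sG_{-1}$ is defined by a presheaf-level cokernel while $\uHom(\G_m,\sG)$ is defined via the triangulated category, so one must know that the presheaf $U\mapsto\Coker(\sG(U\times\A^1)\to\sG(U\times(\A^1-0)))$ is \emph{already} a Nisnevich sheaf with transfers (this is where \cite[Lemma 4.36]{voepre} and the cycle-module structure alluded to after \eqref{eq14} enter), and that its associated object in $\DM_-^\eff$ is canonically $\uHom(\G_m,\sG)$ rather than merely abstractly isomorphic to it. Once the sheaf-with-transfers structure on $\sG_{-1}$ is in hand, the comparison map $\uHom(\G_m,\sG)\to\sG_{-1}$ is induced by evaluation at the tautological class, and it is an isomorphism because it is so on sections over every henselian local ring, which reduces to the split exact sequence $0\to\sG(U)\to\sG(U\times(\A^1-0))\to\sG_{-1}(U)\to 0$ coming from homotopy invariance and the splitting of $M(\G_m)$.
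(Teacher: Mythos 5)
Your overall route is the same as the paper's: compute the internal Hom $\uHom(M(\G_m),\sG[0])$ in $\DM_-^\eff$, split off the unit via the Gysin decomposition $M(\G_m)\simeq\Z\oplus\Z(1)[1]\simeq\Z\oplus\G_m[0]$, and use homotopy invariance to identify the cokernel with $\sG_{-1}$. That part is essentially identical to the paper's argument. However, you have misidentified where the real subtlety lies, and as a result the last (and actually the key) step of the argument is missing.

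You write that ``the subtlety is \ldots one must know that the presheaf $U\mapsto\Coker(\sG(U\times\A^1)\to\sG(U\times(\A^1-0)))$ is already a Nisnevich sheaf with transfers.'' That is not the issue at all: it is a standard fact that $\sG_{-1}\in\HI_\Nis$ whenever $\sG\in\HI_\Nis$ (it follows from \cite[Lemma 4.35]{voepre} and preservation of $\HI_\Nis$ under $(-)_{-1}$; note you cite Lemma 4.36 of \cite{voepre}, which is the local-cohomology statement \eqref{eq14a}, not the one needed here). The genuine subtlety is that the Proposition asserts an \emph{adjunction in $\HI_\Nis$}: it says $\sG_{-1}$ represents the functor $\sH\mapsto\Hom_{\HI_\Nis}(\sH\otimes_{\HI_\Nis}\G_m,\sG)$, where $\otimes_{\HI_\Nis}$ is the $H^0$ of the derived tensor product. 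What you establish is the isomorphism $\sG_{-1}[0]\simeq\uHom(\G_m[0],\sG[0])$ at the level of $\DM_-^\eff$, which a priori only gives you the $\DM_-^\eff$-level adjunction $\Hom_{\DM_-^\eff}(\sH[0]\otimes\G_m[0],\sG[0])\simeq\Hom_{\DM_-^\eff}(\sH[0],\sG_{-1}[0])$. To descend this to $\Hom_{\HI_\Nis}(\sH\otimes_{\HI_\Nis}\G_m,\sG)$, one must know that $\sH[0]\otimes\G_m[0]$ is concentrated in degrees $\le 0$, so that maps to the $0$-truncated object $\sG[0]$ factor through its $H^0$, i.e.\ through $\sH\otimes_{\HI_\Nis}\G_m$. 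That is exactly the right exactness of $\otimes$ with respect to the homotopy $t$-structure, Lemma~\ref{l2} of the paper, and you never invoke it. Without it your ``evaluation at the tautological class'' map is not obviously the counit of an adjunction in $\HI_\Nis$; you have only built the object, not verified the universal property in the smaller category.

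Two smaller points. Your parenthetical ``$\G_m$ has dimension $1$, so only $H^0$ contributes'' in step (2) is not the right reason: since $\sG[0]$ is concentrated in degree $0$, $\Hom_{\DM_-^\eff}(M(U\times\G_m),\sG[0])=\bH^0_\Nis(U\times\G_m,\sG)=\sG(U\times\G_m)$ for the trivial reason that a Nisnevich sheaf has no negative cohomology, regardless of the dimension of $\G_m$. The genuine input about $\G_m$ being a curve enters elsewhere, namely via the vanishing $R^ip_*p^*\sG=0$ for $i>0$ (\cite[Prop.~5.4, Prop.~4.20]{voepre}), which is what makes $p_*p^*\sG[0]\iso Rp_*p^*\sG[0]$ and lets one replace the underived $p_*$ by $\uHom(M(\G_m),-)$. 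You should say this explicitly rather than appealing to dimension. Also, the canonicity/independence-of-coordinate worry in step (4) is handled much more simply in the paper: one fixes the rational point $1\in\G_m$ once and for all to split the Gysin triangle.
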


\begin{proof} This may not be the most economic proof, but it is quite short. The statement
means that $\sG_{-1}$ represents the functor
\[\sH\mapsto \Hom_{\HI_\Nis}(\sH\otimes_{\HI_\Nis}\G_m,\sG).\]

By \cite[Lemma 4.35]{voepre}, we have
\[\sG_{-1} = \Coker(\sG\to p_*p^*\sG)\]
where $p:\A^1-\{0\}\to \Spec k$ is the structural morphism and $p_*, p^*$ are computed with
respect to the Zariski topology. By \cite[Theorem 5.7]{voepre}, we may replace the Zariski topology
by the Nisnevich topology. Moreover, by \cite[Prop. 5.4 and Prop. 4.20]{voepre}, we have
$R^ip_*p^*\sG = 0$ for $i>0$, hence $p_*p^*\sG[0]\iso Rp_*p^*\sG[0]$.

By \cite[Prop. 3.2.8]{voetri}, we have
\[Rp_*p^* \sG[0] = \uHom(M(\A^1-\{0\}),\sG[0])\]
where $\uHom$ is the (partially defined) internal Hom of $\DM_-^\eff$. By \cite[Prop.
3.5.4]{voetri} (Gysin triangle) and homotopy invariance, we have an exact triangle, split by
any rational point of $\A^1-\{0\}$:
\[\Z(1)[1]\to M(\A^1-\{0\})\to \Z\by{+1}\]

To get a canonical splitting, we may choose the rational point $1\in \A^1-\{0\}$.

By \cite[Cor. 3.4.3]{voetri}, we have an isomorphism $\Z(1)[1]\simeq \G_m[0]$. Hence, in
$\DM_-^\eff$, we have an isomorphism
\[\sG_{-1}[0]\simeq \uHom(\G_m[0],\sG[0]).\]

Let $\sH\in \HI_\Nis$. We get:
\begin{multline*}
\Hom_{\DM_-^\eff}(\sH[0],\sG_{-1}[0]) \simeq \Hom_{\DM_-^\eff}(\sH[0]\otimes \G_m[0],\sG[0])\\
\simeq \Hom_{\HI_\Nis}(H^0(\sH[0]\otimes \G_m[0]),\sG)=:\Hom_{\HI_\Nis}(\sH\otimes_{\HI_\Nis}
\G_m,\sG)
\end{multline*}
as desired (see \eqref{eq10}). For the second isomorphism, we have used the right exactness of
$\otimes$ (Lemma
\ref{l2}).
\end{proof}

\begin{rk}\label{r1} The proof of Proposition \ref{p2} also shows that, in $\DM_-^\eff$, we
have an isomorphism
\[\uHom(\G_m[0],\sG[0])\simeq \uHom(\G_m,\sG)[0]\]
where the left $\uHom$ is computed in $\DM_-^\eff$ and the right $\uHom$  is computed in
$\HI_\Nis$. In particular, $\uHom(\G_m[0],-):\DM_-^\eff\to \DM_-^\eff$ is $t$-exact.
\end{rk}

\begin{prop}\label{p4} Let $C$ be a smooth, proper, connected curve over $k$,
with function field $K$. There exists a canonical homomorphism
\[\Tr_{C/k}:H^1_\Zar(C,\sG)\to \sG_{-1}(k)\]
such that, for any $x\in C$, the composition
\[\sG_{-1}(k(x))\simeq H^1_x(C,\sG)\to H^1_\Zar(C,\sG)\by{\Tr_C} \sG_{-1}(k)\]
equals the transfer map $\Tr_{k(x)/k}$ associated to the finite surjective morphism $\Spec
k(x)\to\Spec k$.
\end{prop}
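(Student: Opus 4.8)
The plan is to build $\Tr_{C/k}$ by comparing the Zariski cohomology of $C$ with a motivic internal Hom, exactly in the spirit of the proof of Proposition \ref{p2}. First I would recall from \eqref{eq14a} that for each closed point $x\in C$ there is a canonical isomorphism $\sG_{-1}(k(x))\simeq H^1_x(C,\sG_\Zar)$, and that the coniveau (Cousin) spectral sequence for the homotopy invariant Nisnevich sheaf $\sG$ on the one-dimensional scheme $C$ degenerates into a short exact sequence
\[
0\to \sG(K)\to \bigoplus_{x\in C^{(1)}}\sG_{-1}(k(x))\to H^1_\Zar(C,\sG)\to 0,
\]
so that $H^1_\Zar(C,\sG)$ is literally the cokernel of $\partial=(\partial_x)$ on $\sG(K)$. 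Thus to define $\Tr_{C/k}$ it suffices to produce a map $\bigoplus_{x}\sG_{-1}(k(x))\to \sG_{-1}(k)$ — namely $\sum_x \Tr_{k(x)/k}$ — and to check that it kills the image of $\partial$; the compatibility with the individual $H^1_x$ is then automatic from the construction.

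The key step is therefore to show that for every $u\in \sG(K)$ one has $\sum_{x\in C^{(1)}}\Tr_{k(x)/k}(\partial_x u)=0$ in $\sG_{-1}(k)$. I would deduce this from the motivic description of $\sG_{-1}$: by Proposition \ref{p2} and Remark \ref{r1}, $\sG_{-1}=\uHom(\G_m,\sG)$, and the whole formalism of local symbols $\partial_x$ and transfers $\Tr_{k(x)/k}$ for $\sG_{-1}$ is the one induced, via $\uHom(\G_m,-)$ applied to $\sG[0]$, from the corresponding (elementary) structure on $\G_m$ itself — i.e. the divisor map and the norm map on units. Concretely, applying $\uHom(M(C),-)$ versus $\uHom(\G_m,-)$ to $\sG[0]$ and using the projection $C\to \Spec k$ together with the Gysin/localization triangle for the open curve, the sum $\sum_x\Tr_{k(x)/k}\circ\partial_x$ becomes the pushforward along $C\to\Spec k$ of the boundary in the localization sequence, which vanishes by functoriality of that triangle (the composite $M(\Spec k)\to M(C)\to \Z(1)[2]$-type boundary is zero). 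Equivalently, and perhaps cleaner to write down: reduce by the cycle-module formalism (Lemma \ref{l4}, and the fact cited there that $\sG$ is a Rost cycle module) to the reciprocity law for cycle modules on a proper curve, which is Rost's theorem; the transfer $\Tr_{C/k}$ is then his corestriction along $C\to\Spec k$.

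I expect the main obstacle to be purely organizational rather than deep: one must match the ad hoc maps $\partial_x$, $\Tr_{k(x)/k}$, $H^1_x\hookrightarrow H^1_\Zar$ defined here via \cite{voepre} with the data of Rost's cycle module (or with the motivic $\uHom(\G_m,-)$ picture), and verify that the canonical isomorphism \eqref{eq14a} is compatible with pushforward — this is the content of the assertion, made just before Lemma \ref{l4}, that $\sG$ "defines a cycle module in the sense of Rost (cf. \cite[Prop. 5.4.64]{deglise})". Once that identification is in place, the reciprocity relation $\sum_x\Tr_{k(x)/k}\partial_x=0$ is \cite[Rost]{} (rule R3c / the proper pushforward axiom), and the canonicity and the stated factorization through each $H^1_x$ hold by construction. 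A subtle point to be careful about is the normalization of $\partial_x$ when $k(x)/k$ is inseparable or when the local ring is not regular in mixed characteristic — but since $C$ is smooth over $k$ these do not arise, and $\partial_x$ is exactly the tame symbol attached to the valuation $v_x$.
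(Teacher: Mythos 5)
Your approach is genuinely different in structure from the paper's, and it works — but with a logical inversion that is worth flagging, plus one small error.

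The paper defines $\Tr_{C/k}$ \emph{directly}, without touching the Cousin complex: it identifies $H^1_\Zar(C,\sG)$ with $\Hom_{\DM_-^\eff}(M(C),\sG[1])$ via \cite[Prop.\ 3.2.7]{voetri}, takes the morphism $\Z(1)[2]\to M(C)$ dual to the structural map (Poincar\'e duality), and sets $\Tr_{C/k}$ to be the induced map to $\Hom_{\DM_-^\eff}(\G_m[1],\sG[1])=\sG_{-1}(k)$. The compatibility with the $H^1_x$ summand is then a separate verification, carried out by identifying $M^x(C)\simeq \Z_{k(x)}(1)[2]$ via the Gysin triangle and recognizing the composite $\Z(1)[2]\to M(C)\to \Z_{k(x)}(1)[2]$ as the Chow-motive transpose of $\Spec k(x)\to C$. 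Reciprocity (Proposition \ref{p3}) then \emph{follows} from this. Your route goes the other way: present $H^1_\Zar(C,\sG)$ as $\Coker(\partial)$ via the Gersten resolution, \emph{prove reciprocity first}, and then the existence of $\Tr_{C/k}$ and its compatibility with the $H^1_x$'s are both automatic. This is perfectly sound, and it is conceptually tidier, but it forces you to obtain the reciprocity law by some means \emph{other} than Proposition \ref{p4}, since in the paper Proposition \ref{p3} is deduced from it. Of your two suggested routes, the cycle-module route (via D\'eglise's \cite[Prop.\ 5.4.64]{deglise}, identifying $\partial_x$ and $\Tr_{k(x)/k}$ with Rost's residue and corestriction, then invoking reciprocity for cycle modules over a proper curve) is a genuine and valid alternative, importing a substantial known theorem rather than rederiving it. Your other suggested route, via the Gysin/localization triangle and pushforward along $C\to\Spec k$, is essentially circular: to make sense of ``pushforward along $C\to\Spec k$'' landing in $\sG_{-1}(k)$ you already need the morphism $\Z(1)[2]\to M(C)$, i.e.\ you are reconstructing the paper's $\Tr_{C/k}$, so this does not give an independent proof of reciprocity.

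One small slip: the left-hand map in your claimed short exact sequence
$0\to\sG(K)\to\bigoplus_x\sG_{-1}(k(x))\to H^1_\Zar(C,\sG)\to 0$
is \emph{not} injective in general; its kernel is $H^0_\Zar(C,\sG)=\sG(C)$, which is nonzero already for $\sG=\Z$. The correct statement from the Gersten resolution is merely right-exactness, $\sG(K)\to\bigoplus_x\sG_{-1}(k(x))\to H^1_\Zar(C,\sG)\to 0$, which is all you use, so the argument is unaffected. Also, calling $\partial_x$ ``the tame symbol'' is slightly loose — it is the residue of the cycle-module structure on $\sG$, which only specializes to the tame symbol when $\sG$ is Milnor $K$-theory — but this is a matter of wording, not substance.
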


\begin{proof} By  
\cite[Prop. 3.2.7]{voetri}, we have
\[H^1_\Zar(C,\sG)\iso H^1_\Nis(C,\sG)\simeq
\Hom_{\DM_-^\eff}(M(C),\sG[1]).\] 

The structural morphism $C\to\Spec k$ yields a morphism of motives $M(C)\to \Z$ which, by
Poincar\'e duality, yields a canonical morphism
\[\G_m[1]\simeq \Z(1)[2]\to M(C).\]

(One may view this morphism as the image of the canonical morphism $\L\to h(C)$ in the category
of Chow motives.)

Therefore, by Proposition \ref{p2} and Remark \ref{r1}, we get a map
\[\Tr_{C/k}:H^1_\Zar(X,\sG)\to\Hom_{\DM_-^\eff}(\G_m[1],\sG[1])=\sG_{-1}(k).\]

It remains to prove the claimed compatibility. Let $M^x(C)$ be the motive of $C$ with supports
in $x$, defined as $C_*(\Coker(L(C-\{x\})\to L(C))$. Let $\Z_{k(x)}=M(\Spec k(x))$. By
\cite[proof of Prop. 3.5.4]{voetri}, we have an isomorphism $M^x(C)\simeq \Z_{k(x)}(1)[2]$, and
we have to show that the composition
\[
\Z(1)[2]\to M(C)\by{g_x} \Z_{k(x)}(1)[2]
\]
is $\Tr_{k(x)/k}$, up to twisting and shifting.  To see this, we observe that $g_x$ is the image of the morphism of Chow motives
\[h(C)\to h(\Spec k(x))(1)\]
dual to the morphism $h(\Spec k(x))\to h(C)$ induced by the inclusion $\Spec k(x)\to C$: this
is easy to check from the definition of $g_x$ in \cite{voetri} (observe that in this special
case, $Bl_x(C)=C$ and that we may use a variant of the said construction replacing $C\times
\A^1$ by $C\times\P^1$ to stay within smooth projective varieties). The conclusion now follows
from the fact that the composition 
\[\Spec k(x)\to C\to \Spec k\]
is the structural morphism of $\Spec k(x)$.
\end{proof}

\begin{prop}[Reciprocity] \label{p3} Let $C$ be a smooth, proper, connected curve over $k$, with
function field $K$. Then the sequence
\[\begin{CD}
\sG(K)@>{(\partial_x)}>> \bigoplus_{x\in C} \sG_{-1}(k(x))@>\sum_x \Tr_{k(x)/k}>>
\sG_{-1}(k)
\end{CD}\]
is a complex.
\end{prop}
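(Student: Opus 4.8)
The plan is to reduce Proposition \ref{p3} to Proposition \ref{p4}, which already packages the transfer maps $\Tr_{k(x)/k}$ as coming from a single global map $\Tr_{C/k}:H^1_\Zar(C,\sG)\to \sG_{-1}(k)$. The key observation is that the source of $\Tr_{C/k}$, namely $H^1_\Zar(C,\sG)$, is where the residue maps $\partial_x$ naturally land collectively. First I would recall the coniveau (Cousin) spectral sequence for the homotopy invariant Nisnevich sheaf $\sG$ on the smooth proper curve $C$: since $\sG$ is homotopy invariant with transfers, the Gersten resolution holds (this is Voevodsky's theory, the same facts underlying \eqref{eq14a} and \cite[Prop. 5.4.64]{deglise}), so $H^1_\Zar(C,\sG)$ is computed as the cokernel of the Gersten complex
\[
\sG(K)\by{(\partial_x)} \bigoplus_{x\in C} \sG_{-1}(k(x)) \longrightarrow H^1_\Zar(C,\sG)\longrightarrow 0,
\]
using that $C$ has dimension $1$ so there are no higher terms and that $H^0$ of the Gersten complex recovers $\sG(C)\subset \sG(K)$ while $H^1_\Zar(C,\sG)$ sits in degree $1$. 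Thus the image of $(\partial_x)$ is exactly the kernel of the surjection $\bigoplus_x \sG_{-1}(k(x))\to H^1_\Zar(C,\sG)$.

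Next I would invoke Proposition \ref{p4}: for each $x\in C$ the composite $\sG_{-1}(k(x))\simeq H^1_x(C,\sG)\to H^1_\Zar(C,\sG)\by{\Tr_{C/k}}\sG_{-1}(k)$ equals $\Tr_{k(x)/k}$. Summing over $x$, the map $\sum_x \Tr_{k(x)/k}:\bigoplus_x \sG_{-1}(k(x))\to \sG_{-1}(k)$ factors as
\[
\bigoplus_{x\in C} \sG_{-1}(k(x)) \longrightarrow H^1_\Zar(C,\sG) \by{\Tr_{C/k}} \sG_{-1}(k),
\]
where the first arrow is the natural map from the Gersten complex (the direct sum over $x$ of the edge maps $H^1_x(C,\sG)\to H^1_\Zar(C,\sG)$). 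Since this first arrow kills the image of $(\partial_x)$ by the exactness of the Gersten complex recalled above, so does the composite $\sum_x\Tr_{k(x)/k}$. That is exactly the assertion that the displayed sequence is a complex.

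The main obstacle, and the only point requiring care, is matching up the two avatars of $H^1$: one has to be sure that the map $\bigoplus_x H^1_x(C,\sG)\to H^1_\Zar(C,\sG)$ appearing as the boundary/edge map in the local cohomology (Cousin) picture is literally the universal cokernel map of the Gersten complex, and that under the identification \eqref{eq14a} it sends $\bigoplus_x\sG_{-1}(k(x))$ onto $H^1_\Zar(C,\sG)$ with kernel precisely $(\partial_x)(\sG(K))$. This is the statement that $\sG$ admits a Gersten/Cousin resolution on the smooth curve $C$, which for homotopy invariant Nisnevich sheaves with transfers over a perfect field is Voevodsky's Gersten resolution theorem (equivalently, the cycle-module structure cited after Lemma \ref{l4}); on a curve it is elementary since only codimensions $0$ and $1$ intervene. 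One should also note the compatibility of \eqref{eq14a} with the edge map is built into its construction, exactly as is used in the proof of Proposition \ref{p4}. Once these identifications are in place the proof is formal.
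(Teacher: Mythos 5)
Your proposal is correct and takes essentially the same route as the paper: factor $\sum_x \Tr_{k(x)/k}$ through $\Tr_{C/k}$ via Proposition~\ref{p4}, and observe that $\sG(K)\to\bigoplus_x H^1_x(C,\sG)\to H^1_\Zar(C,\sG)$ is zero. The paper invokes the latter fact directly (it is the standard localization/Cousin-complex exactness), whereas you re-derive it via the Gersten resolution on a curve; this is the same underlying input, just stated in more detail than the paper needs, so the two arguments coincide.
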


\begin{proof} This follows from Proposition \ref{p4}, since the composition
\[\sG(K)\to \bigoplus_{x\in C} H^1_x(C,\sG)\by{(g_x)} H^1(C,\sG)\]
is $0$.
\end{proof}

\begin{para}\label{s4.4}
If $\sF,\sG$ are presheaves with transfers, there is a bilinear morphism of presheaves with
transfers (i.e. a natural transformation over $\PST\times \PST$):
\begin{multline*}
\sF(U)\otimes \sG_{-1}(V) =\\
\Coker\left(\sF(U)\otimes \sG(V\times\A^1)\to \sF(U)\otimes
\sG(V\times(\A^1-\{0\}))\right)\to\\
 \Coker\left((\sF\otimes_{\PST} \sG)(U\times V\times \A^1)\to (\sF\otimes_{\PST}
\sG)(U\times V\times(\A^1-\{0\}))\right)\\
= (\sF\otimes_{\PST}\sG)_{-1}(U\times V)
\end{multline*}
which induces a morphism
\begin{equation}\label{eq16}
\sF\otimes_{\PST}\sG_{-1}\to (\sF\otimes_{\PST}\sG)_{-1}.
\end{equation}

In particular, for $\sG=\G_m$, we get a morphism $\sF\to (\sF\otimes_{\PST} \G_m)_{-1}$.
\end{para}

\begin{thm}\label{t1} Suppose $\sF\in \HI_\Nis$. Then\\
a) The composition 
\[\sF\to (\sF\otimes_{\PST} \G_m)_{-1}\to (\sF\otimes_{\HI_\Nis} \G_m)_{-1}\]
is the unit map of the adjunction between $-\otimes_{\HI_\Nis} \G_m$ and $(-)_{-1}$
stemming from Proposition \ref{p2}.\\
b) This composition is
an isomorphism.
\end{thm}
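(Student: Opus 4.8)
The plan is to reduce both parts to a universal property, using Proposition \ref{p2}, which identifies $(-)_{-1}$ on $\HI_\Nis$ with $\uHom(\G_m,-)$. Part (a) is essentially a bookkeeping statement: the morphism $\sF\to(\sF\otimes_{\PST}\G_m)_{-1}$ of \S\ref{s4.4} is, by construction, obtained by applying $(-)_{-1}$ to the unit $\sF\otimes_{\PST}\G_m\to\sF\otimes_{\HI_\Nis}\G_m$ composed with the ``evaluation-type'' pairing built from \eqref{eq16} with $\sG=\G_m$. So first I would trace through \S\ref{s4.4} and check that, after sheafification and passage to $\HI_\Nis$, the composite in (a) is adjoint to the identity of $\sF\otimes_{\HI_\Nis}\G_m$ under the adjunction $(-\otimes_{\HI_\Nis}\G_m,(-)_{-1})$ of Proposition \ref{p2}; equivalently, that it agrees with the unit $\eta_\sF:\sF\to\uHom(\G_m,\sF\otimes_{\HI_\Nis}\G_m)$. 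This is a compatibility between the explicit pairing \eqref{eq16} and the abstract internal-Hom adjunction, and I expect it to follow formally once one notes that \eqref{eq16} is itself the $\PST$-level shadow of the evaluation/coevaluation for the tensor structure; the canonical splitting at $1\in\A^1-\{0\}$ chosen in the proof of Proposition \ref{p2} must be matched with the normalization implicit in \eqref{eq:defofminusone}, and that is the one genuinely fiddly point of (a).

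For part (b), the strategy is to show $\eta_\sF$ is an isomorphism for all $\sF\in\HI_\Nis$. The key input is the computation $\Z(1)[1]\simeq\G_m[0]$ together with the Gysin triangle $\Z(1)[1]\to M(\A^1-\{0\})\to\Z\xrightarrow{+1}$ split by $1$, exactly as in the proof of Proposition \ref{p2}: this says $M(\A^1-\{0\})\simeq\Z\oplus\G_m[0]$ in $\DM_-^\eff$. Dually, $\uHom(M(\A^1-\{0\}),\sG[0])\simeq\sG[0]\oplus\uHom(\G_m[0],\sG[0])$, and by Remark \ref{r1} the second summand is $\sG_{-1}[0]$. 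Now I would take $\sG=\sF\otimes_{\HI_\Nis}\G_m$ and instead compare directly: by Proposition \ref{p2} and its proof, $\eta_\sF$ is identified with the map $\sF\to\uHom(\G_m,\sF\otimes\G_m)$ coming from the unit of the internal-Hom adjunction in the \emph{rigid-like} situation where $\G_m[0]=\Z(1)[1]$ is $\otimes$-invertible in $\DM_-^\eff$. Since $\Z(1)$ is invertible (Cancellation), $\uHom(\G_m[0],-)$ is $t$-exact (Remark \ref{r1}) and is a quasi-inverse to $-\otimes\G_m[0]$; hence the unit $\sF[0]\to\uHom(\G_m[0],\sF[0]\otimes\G_m[0])$ is an isomorphism in $\DM_-^\eff$, and applying $H^0$ (using $t$-exactness on both sides and Lemma \ref{l2}) gives that $\eta_\sF$ is an isomorphism in $\HI_\Nis$.

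Alternatively, and perhaps more in the spirit of Voevodsky's text, one can argue without invoking Cancellation by hand: the Gysin splitting gives a retraction $r:M(\A^1-\{0\})\to\G_m[0]$, and pairing with $\sF[0]$ and using that $C_*$ is symmetric monoidal one writes down an explicit inverse to $\eta_\sF$ as the composite $\uHom(\G_m[0],\sF[0]\otimes\G_m[0])\to\uHom(M(\A^1-\{0\}),\sF[0]\otimes\G_m[0])\simeq(\sF\otimes\G_m)(\A^1-\{0\})/(\sF\otimes\G_m)(\A^1)\to\sF$, the last arrow coming from \eqref{eq16} read backwards; then one checks both composites are the identity on $H^0$. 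I would present the first (invertibility) argument as the main line and mention the second as a remark.

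**The main obstacle.** The conceptual content is light — everything is forced once $\G_m[0]$ is $\otimes$-invertible — so the real work is \emph{normalization}: making sure the map in (a), defined concretely via cokernels of $\A^1$-restriction maps in \S\ref{s4.4}, is literally the abstract unit $\eta_\sF$ and not merely isomorphic to it, and in particular that the sign/splitting conventions (the choice of the point $1$, the direction of \eqref{eq:defofminusone}) line up. I expect (a) rather than (b) to be where one has to be careful, since (b) then follows from (a) plus invertibility of $\Z(1)$ almost immediately.
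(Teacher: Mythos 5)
Your proof follows the paper's almost verbatim: the paper dismisses (a) as ``easy bookkeeping,'' and for (b) it likewise passes to $\DM_-^\eff$, rewrites the morphism via $\G_m[0]\simeq\Z(1)[1]$ as $\sF[0]\to\uHom(\Z(1),\sF[0](1))$, invokes the cancellation theorem, and descends by applying $H^0$ using the $t$-exactness of $\uHom(\G_m[0],-)$ from Remark \ref{r1}. One small wording correction: $\Z(1)$ is \emph{not} $\otimes$-invertible in $\DM_-^\eff$, and $\uHom(\G_m[0],-)$ is not a two-sided quasi-inverse to $-\otimes\G_m[0]$ — the counit $\uHom(\Z(1),\Z)\otimes\Z(1)\to\Z$ is zero, since $\uHom(\Z(1),\Z)=0$ by effectivity. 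Cancellation gives only \emph{full faithfulness} of $-\otimes\Z(1)$, i.e.\ that the \emph{unit} $A\to\uHom(\Z(1),A(1))$ is an isomorphism; since that is the only consequence you actually use, your argument goes through unchanged once rephrased this way.
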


\begin{proof} a) is an easy bookkeeping. For b),  we compute again in $\DM_-^\eff$. By
Proposition
\ref{p2}, we are considering the  morphism in $\HI_\Nis$
\begin{equation}\label{eq13}
\sF\to \uHom(\G_m,\sF\otimes_{\HI_\Nis} \G_m).
\end{equation}

Consider the corresponding morphism in $\DM_-^\eff$
\[\sF[0]\to \uHom(\G_m[0],\sF[0]\otimes \G_m[0]).\]

As recalled in the proof of Proposition \ref{p2}, we have $\G_m[0]=\Z(1)[1]$, hence the above
morphism amounts to
\[\sF[0]\to \uHom(\Z(1),\sF[0](1))\]
which is an isomorphism by the cancellation theorem \cite{voecan}. A fortiori, \eqref{eq13},
which is (by Remark \ref{r1}) the $H^0$ of this isomorphism, is an isomorphism.
 \end{proof}

\begin{nota}\label{n1} Let $\sF,\sG\in \HI_\Nis$ and
$\sH=\sF\otimes_{\HI_\Nis}
\sG$. Let
$X,K,x$ be as in \S \ref{s4.2}. For $(a,b)\in \sF(K)\times \sG(K)$,  we denote by
$a\cdot b$ the image of $a\otimes b$ in $\sH(K)$ by the map
\[\sF(K)\otimes \sG(K)\to \sH(K).\]
We define the {\it local symbol} on $\sF$
\[ \sF(K) \times K^* \to \sF(k(x)) 
\]
to be the composition
\[ \sF(K) \times K^* \overset{\cdot}{\to} 
  (\sF\otimes_{\HI_\Nis}\G_m)(K)
  \overset{\pd_x}{\to}
  (\sF\otimes_{\HI_\Nis}\G_m)_{-1}(k(x)) \cong \sF(k(x))
\]
where 
the first map is given by the above construction with $\sG=\G_m$,
and the last isomorphism is given by Theorem \ref{t1}.
The image of $(a, b) \in \sF(K) \times K^*$
by the local symbol is denoted by $\pd_x(a, b) \in \sF(k(x))$.
\end{nota}

\begin{prop}[cf. \protect{\cite[Prop. 5.5.27]{deglise}}]\label{l3} Let $\sF,\sG\in \HI_\Nis$,
and consider the morphism induced by \eqref{eq16}
\[\sF\otimes_{\HI_\Nis} \sG_{-1}\by{\Phi} (\sF\otimes_{\HI_\Nis} \sG)_{-1}.\]
 Let $X,K,x$ be as in \S \ref{s4.2}. Then the diagram
\[\xymatrix{
\sF(\sO_{X,x})\otimes \sG(K)\ar[r]\ar[d]^{i_x^*\otimes \partial_x}&
(\sF\otimes_{\HI_\Nis}\sG)(K)\ar[dd]^{\partial_x}\\
\sF(k(x))\otimes \sG_{-1}(k(x))\ar[d]\\
(\sF\otimes_{\HI_\Nis} \sG_{-1})(k(x))\ar[r]^{\Phi}& (\sF\otimes_{\HI_\Nis} \sG)_{-1}(k(x))
}\]
commutes, where $i_x^*$ is induced by the reduction map $\sO_{X,x}\to k(x)$. In other words,
with Notation \ref{n1} we have the identity
\begin{equation}\label{eq17}
\partial_x(a\cdot b) = \Phi(i_x^*a\cdot \partial_x b)
\end{equation}
for $(a,b)\in \sF(\sO_{X,x})\times \sG(K)$.
\end{prop}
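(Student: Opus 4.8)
The plan is to reduce the identity \eqref{eq17} to a purely formal statement about presheaves with transfers, before any homotopy-invariant sheafification, and then check that both sides of \eqref{eq17} are computed by the same natural transformation at the level of $\PST$. Concretely, I would first unwind the definition of $\Phi$: it is obtained from the bilinear morphism of presheaves with transfers constructed in \S \ref{s4.4}, namely $\sF\otimes_{\PST}\sG_{-1}\to (\sF\otimes_{\PST}\sG)_{-1}$ from \eqref{eq16}, after applying $h_0^\Nis$ and using \eqref{eq8}. So the assertion is really about the compatibility of $\partial_x$ (from \eqref{eq14}, i.e. the boundary in the Gersten/cycle-module structure of Lemma \ref{l4}) with the multiplication map $\cdot:\sF(K)\otimes\sG(K)\to\sH(K)$, where the input $a$ comes from $\sF(\sO_{X,x})$, i.e. has no pole along $x$.

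The key steps, in order: (1) Work first with $\sF\otimes_{\PST}\sG$ instead of $\sF\otimes_{\HI_\Nis}\sG$. The product $a\otimes b\mapsto a\cdot b$ factors through $(\sF\otimes_{\PST}\sG)(K)$, and the map $\partial_x$ on a presheaf with transfers $\sM$ (here $\sM=\sF\otimes_{\PST}\sG$) is, by definition \eqref{eq:defofminusone} and \eqref{eq14}, induced by restricting sections from a neighbourhood of the generic point to the punctured neighbourhood and taking the image in $\sM_{-1}(k(x))$. (2) Observe that, since $a\in\sF(\sO_{X,x})$, the element $a\cdot b$ is the image of $a\otimes b$ under $\sF(\sO_{X,x})\otimes\sG(K)\to(\sF\otimes_{\PST}\sG)(K)$, and this map is compatible with pulling back along $U\times(\A^1-\{0\})$: the partial boundary operation $\partial_x$ on the $\sG$-factor, landing in $\sG_{-1}$, is glued with the untouched (regular at $x$) $\sF$-factor precisely by the morphism \eqref{eq16}. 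This is the content of the displayed square: the top horizontal and right vertical compose to $\partial_x(a\cdot b)$, while going down the left side and across the bottom gives $\Phi(i_x^*a\cdot\partial_x b)$, and at the level of $\PST$ these agree by the very construction of \eqref{eq16} as a natural transformation of presheaves with transfers (both are computed from the same $\Coker$ of $U\times V\times\A^1\to U\times V\times(\A^1-\{0\})$). (3) Pass to $\HI_\Nis$: apply $h_0^\Nis$ everywhere, using that $h_0^\Nis$ is symmetric monoidal \eqref{eq15}, that $(-)_{-1}$ commutes suitably with the passage from $\PST$ to $\HI_\Nis$ (the isomorphism \eqref{eq14a} identifying $\sG_{-1}(k(x))$ with $H^1_x(X,\sG_\Zar)$ is functorial), and that sections over $k(x)$ are insensitive to Nisnevich sheafification \eqref{eq11}. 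The naturality of all the identifications involved — \eqref{eq:h0}, \eqref{eq8}, \eqref{eq14a} — then transports the $\PST$-level commutativity to the $\HI_\Nis$-level square as stated.

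The main obstacle I anticipate is step (2): verifying that the boundary map $\partial_x$ on $\sF\otimes_{\PST}\sG$, when restricted to products $a\cdot b$ with $a$ regular at $x$, is literally computed by applying $\partial_x$ to the $\sG$-factor and then applying \eqref{eq16}. This requires being careful about what ``regular at $x$'' buys us: one must realise $a$ as a section of $\sF$ over a Zariski (or Nisnevich) neighbourhood $U$ of $x$ in $X$, write the product $a\cdot b$ as coming from $\sF(U)\otimes\sG(U\times(\A^1-\{0\}))$ via the $\A^1$-trick hidden in the definition of $(-)_{-1}$, and then chase through the two $\Coker$'s appearing in \S \ref{s4.4}. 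This is essentially the same bookkeeping as in \cite[Prop. 5.5.27]{deglise} in Déglise's cycle-module framework, so I would either cite that directly or transcribe the diagram chase; the only genuinely new point is matching Déglise's formalism with Voevodsky's $(-)_{-1}$ and with the monoidal structure from \S \ref{s2.8}, which is routine given Proposition \ref{p2} and Theorem \ref{t1}. The verification that the last isomorphism $(\sF\otimes_{\HI_\Nis}\G_m)_{-1}\cong\sF$ from Notation \ref{n1} is compatible with all of this — needed only implicitly here, but relevant for later use — follows from Theorem \ref{t1}(a), identifying it with an adjunction unit.
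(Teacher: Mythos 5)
The paper itself gives no proof of Proposition~\ref{l3}: the ``cf.'' citation to D\'eglise carries all the weight, and the next item in the text is Corollary~\ref{c1}. So your high-level plan---reduce to the formal compatibility of $\partial_x$ with the bilinear pairing of \S\ref{s4.4} and either cite D\'eglise or transcribe his diagram chase---is in effect what the paper does.

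There is, however, a genuine error in step~(1) of your outline. You assert that the boundary map $\partial_x:\sM(K)\to\sM_{-1}(k(x))$ is defined for an arbitrary presheaf with transfers $\sM$ (in particular for $\sM=\sF\otimes_{\PST}\sG$) ``by definition \eqref{eq:defofminusone} and \eqref{eq14}.'' This is not so. The map $\partial_x$ in \eqref{eq14} depends on the identification \eqref{eq14a}, $\sG_{-1}(k(x))\simeq H^1_x(X,\sG_\Zar)$, which is Voevodsky's Lemma 4.36 and requires $\sG$ to be \emph{homotopy invariant}. For a general $\sM\in\PST$, the section-wise cokernel defining $\sM_{-1}$ has no a priori relation to $H^1_x(X,\sM)$, so there is no boundary map to chase with. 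One therefore cannot ``work first at the $\PST$ level'' with $\sF\otimes_{\PST}\sG$ and then sheafify: the boundary maps in the statement only exist on the $\HI_\Nis$ side.

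The repair is not hard but reshapes your argument. Put $\sH:=\sF\otimes_{\HI_\Nis}\sG\in\HI_\Nis$. Since $a\in\sF(\sO_{X,x})$ extends to some open $U\ni x$, pairing with the restrictions of $a$ gives a morphism of Nisnevich sheaves of abelian groups $\sG|_U\to\sH|_U$, hence by functoriality of cohomology with supports a commutative square
\[
\begin{CD}
\sG(K) @>>> H^1_x(X,\sG)\\
@VVV @VVV\\
\sH(K) @>>> H^1_x(X,\sH).
\end{CD}
\]
One then applies \eqref{eq14a} on both rows. What remains---and what your outline does not actually establish---is that the right-hand vertical, transported through \eqref{eq14a}, agrees with $\xi\mapsto\Phi(i_x^*a\cdot\xi)$. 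That is the one nontrivial compatibility, matching Voevodsky's construction of \eqref{eq14a} against the bilinear map \eqref{eq16}; it is precisely the content of D\'eglise's Prop.~5.5.27, and a complete proof must either carry it out or cite it as the paper does. Your proposal correctly identifies all the surrounding structural ingredients but, as written, sidesteps this specific check by pushing it into a $\PST$-level statement that does not make sense.
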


\begin{cor}\label{c1} Let $\sF\in \HI_\Nis$; let $X,K,x$ be as in \S \ref{s4.2} and let
$(a,f)\in \sF(K)\times K^*$. \\
a) Suppose that there is $\tilde{a} \in \sF(\sO_{X,x})$
whose image in $\sF(K)$ is $a$. Then
we have
\[\partial_x(a, f) = v_x(f) a(x)\]
where $a(x)$ is the image of 
$\tilde{a}$ in $\sF(k(x))$
(which is independent of the choice of $\tilde{a}$).\\
b) Suppose that $v_x(f- 1)>0$. Then $\partial_x(a, f) = 0$.
\end{cor}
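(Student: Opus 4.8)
The plan is to derive both statements directly from the identity \eqref{eq17} of Proposition \ref{l3}, using Corollary \ref{c1}a) to reduce b) to a computation in $\G_m$. For part a), the hypothesis furnishes $\tilde a\in\sF(\sO_{X,x})$ lifting $a$, so I would apply \eqref{eq17} with $\sG=\G_m$ to the pair $(\tilde a, f)\in \sF(\sO_{X,x})\times K^*$. This gives $\partial_x(a\cdot f)=\Phi(i_x^*\tilde a\cdot \partial_x f)$, where $\partial_x\colon \G_m(K)=K^*\to (\G_m)_{-1}(k(x))$ is the classical tame symbol, whose value on $f$ is (the class corresponding to) $v_x(f)$ — more precisely $\partial_x f = v_x(f)\cdot 1$ under the canonical identification $(\G_m)_{-1}(k(x))\simeq \Z$ coming from the unit, which is standard from \cite[Lemma 4.36]{voepre} and the construction of \eqref{eq14a}. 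Since by Theorem \ref{t1} the composite $\sF(k(x))\to(\sF\otimes_{\HI_\Nis}\G_m)_{-1}(k(x))$ through which the local symbol is defined is an isomorphism identifying $\Phi(b\cdot \partial_x f)$ with $v_x(f)\,b$, I get $\partial_x(a,f)=v_x(f)\,a(x)$ with $a(x)=i_x^*\tilde a$. Independence of the choice of $\tilde a$ is automatic since any two lifts agree after applying $i_x^*$ — the reduction map $\sF(\sO_{X,x})\to\sF(k(x))$ factors the ambiguity away, as $\sF(\sO_{X,x})\to\sF(K)$ and $\sF(\sO_{X,x})\to\sF(k(x))$ have the property that the kernel of the former maps into... here one uses that $\sF$ is a Nisnevich sheaf with transfers and the local ring $\sO_{X,x}$ is a DVR, so $\sF(\sO_{X,x})\hookrightarrow\sF(K)$ (Gersten-type injectivity, e.g. via \cite[Cor. 4.18]{voepre}); hence $\tilde a$ is unique and the point is moot.

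For part b), I would not lift $a$ — indeed $a$ need not extend — but instead exploit that $f$ is close to $1$. The key observation is that $\partial_x f=0$ in $(\G_m)_{-1}(k(x))$ whenever $v_x(f-1)>0$: this is the statement that a unit congruent to $1$ modulo the maximal ideal lies in the subgroup killed by the tame symbol, i.e. $1+\mathfrak m_x\subseteq \ker(\partial_x|_{\sO_{X,x}^*})$. Concretely, such an $f$ comes from a unit in $\sO_{X,x}^*$ reducing to $1\in k(x)^*$, so $f\in \G_m(\sO_{X,x})$ and thus $\partial_x f = v_x(f)\cdot f(x) = 0\cdot 1 = 0$ by part a) applied to $\sF=\G_m$ — actually even simpler, $v_x(f)=0$ already gives $\partial_x f=0$ by part a) with $\sF=\G_m$ and $\tilde f = f$. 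Wait: part a) with $\sF=\G_m$ gives $\partial_x(a',f)=v_x(f)a'(x)$ for the two-variable symbol, but I want the one-variable $\partial_x f$; these agree when the first slot is the unit section $1$, by construction of the local symbol in Notation \ref{n1}. So: $f\in\sO_{X,x}^*$ implies $v_x(f)=0$ implies $\partial_x(1,f)=0$. Then \eqref{eq17} applied to $(a, f)$ — here I do need $a$ in $\sF(\sO_{X,x})$, which I may not have.

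So let me restructure b): instead, I would observe that $v_x(f-1)>0$ forces $f\in\sO_{X,x}^*$, hence multiplication by $f$ makes sense as an automorphism-type operation, but more to the point $a\cdot f = a\cdot 1 + a\cdot(f-1)\cdot(\text{corrections})$ is not linear, so the honest route is: $f$ lies in the image of $\G_m(\sO_{X,x})\to\G_m(K)$, and moreover $f\equiv 1$, so writing the local symbol $\partial_x(a,f)$ via Notation \ref{n1} as $\partial_x$ applied to $a\cdot f\in(\sF\otimes_{\HI_\Nis}\G_m)(K)$, I would use the projection formula / Lemma \ref{l4}(a) together with the fact that $f$, being a unit, gives a class $a\cdot f$ that extends to $\sO_{X,x}$ modulo classes with trivial residue. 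The cleanest argument is functorial: the section $f\in\G_m(\sO_{X,x})$ together with $a\in\sF(K)$ need not pair to something in $\sF\otimes\G_m(\sO_{X,x})$, but $\partial_x$ kills the subgroup generated by $a'\cdot f$ with $f\in 1+\mathfrak m_x$ because this is exactly the relation making $(-)_{-1}$ see only the "value at $0$" — formally, $f\in 1+\mathfrak m_x$ means $f$ is in the image of $\sO_{X,x}[t]/(t-1)\cdots$; I will invoke \cite[Lemma 4.36]{voepre} which computes $\sG_{-1}(k(x))$ as a cokernel in which such $f$ maps to zero. The main obstacle is precisely making this last reduction rigorous without re-deriving the structure of $\partial_x$ from scratch; I expect the intended proof simply cites the cycle-module formalism (Lemma \ref{l4} and the Rost-module structure, via \cite[Prop. 5.4.64]{deglise}), under which a) is the formula for $\partial_x$ of a "product with a unit" and b) is the vanishing of the residue of a unit $\equiv 1$, both of which are axioms R3a/R3b for Rost cycle modules.
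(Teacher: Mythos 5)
Your treatment of part a) is essentially the paper's argument: apply \eqref{eq17} from Proposition \ref{l3} with $\sG=\G_m$ and the lift $\tilde a\in\sF(\sO_{X,x})$, then identify $\partial_x f$ with $v_x(f)$ under $(\G_m)_{-1}(k(x))\simeq\Z$ and use Theorem \ref{t1} to go back to $\sF(k(x))$; the observation that $\tilde a$ is unique by injectivity of $\sF(\sO_{X,x})\to\sF(K)$ is a correct (if unnecessary) aside. So far so good.

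Part b), however, does not land. You correctly diagnose the obstacle — you cannot apply \eqref{eq17} as written because $a$ need not extend to $\sO_{X,x}$ — and you correctly observe that $v_x(f-1)>0$ forces $f\in\sO_{X,x}^*$. But then you cast about among several vague strategies (``modulo classes with trivial residue,'' re-deriving the structure of $(-)_{-1}$ from \cite[Lemma 4.36]{voepre}, or punting to Rost's cycle module axioms) without ever executing the one-line argument the paper intends. The point is simply to \emph{apply Proposition \ref{l3} with the two sheaves exchanged}: take $\G_m$ as the first sheaf (so its section is required to lie over $\sO_{X,x}$) and $\sF$ as the second. Since $f\in\G_m(\sO_{X,x})$ with $i_x^*f=1\in k(x)^*$, and since $1$ is precisely the zero element of the abelian group $\G_m(k(x))$, identity \eqref{eq17} gives
\[
\partial_x(f\cdot a)=\Phi\bigl(i_x^*f\cdot\partial_x a\bigr)=\Phi\bigl(1\cdot\partial_x a\bigr)=0
\]
in $(\G_m\otimes_{\HI_\Nis}\sF)_{-1}(k(x))$, and by the symmetry constraint this is $\partial_x(a,f)$. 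You never use the fact that $f(x)=1$ is the \emph{zero} of the group $\G_m$, which is the whole content of b); $v_x(f)=0$ alone (i.e.\ $f$ a unit) would not suffice, as your own first attempt at b) shows.
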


\begin{proof} a) This follows from Proposition \ref{l3} (applied with $\sG=\G_m$) and Theorem
\ref{t1}. b) This follows again from Proposition \ref{l3}, after switching the r\^oles of $\sF$
and $\sG$.
\end{proof}

\begin{prop}\label{l5} 
Let $G$ be a semi-abelian variety.
The local symbol on $G$ defined in Notation \ref{n1} 
agrees with 
Somekawa's  local symbol \cite[(1.1)]{somekawa} (generalising the
Rosenlicht-Serre local symbol) on $G$.
\end{prop}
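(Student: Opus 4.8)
The plan is to compare the two local symbols by their defining properties. Somekawa's local symbol $s_x\colon G(K)\times K^*\to G(k(x))$ is, by \cite[(1.1)]{somekawa} (following Rosenlicht--Serre), characterized as the unique pairing satisfying: (i) bilinearity; (ii) $s_x(a,f)=v_x(f)\cdot a(x)$ whenever $a$ extends to a section $\tilde a\in G(\sO_{X,x})$; (iii) $s_x(a,f)=0$ whenever $v_x(f-1)$ is large enough (depending on the modulus of $a$); and (iv) the reciprocity relation $\sum_{x\in C}\Tr_{k(x)/k}\,s_x(a,f)=0$ for $C$ a smooth proper curve with function field $K$ and $(a,f)\in G(K)\times K^*$. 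So the strategy is: first establish that our local symbol $\partial_x$ from Notation \ref{n1}, specialized to $\sF=G$, satisfies properties (i)--(iv); then invoke the uniqueness to conclude $\partial_x=s_x$.

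The key steps, in order. \emph{Step 1: bilinearity.} This is immediate from the construction in Notation \ref{n1}, since the pairing factors through the bilinear map $\sF(K)\otimes\G_m(K)\to(\sF\otimes_{\HI_\Nis}\G_m)(K)$ followed by the additive map $\partial_x$ and the isomorphism of Theorem \ref{t1}. \emph{Step 2: the tame/unramified formula and vanishing.} These are exactly Corollary \ref{c1}(a) and (b): if $a$ lifts to $\sF(\sO_{X,x})$ then $\partial_x(a,f)=v_x(f)a(x)$, giving (ii), and if $v_x(f-1)>0$ then $\partial_x(a,f)=0$, which is a strong form of (iii). \emph{Step 3: reciprocity.} Apply Proposition \ref{p3} with $\sG:=G\otimes_{\HI_\Nis}\G_m$ (or rather, trace through: for a smooth proper connected curve $C$ over $k$ with function field $K$, the composite $\sum_x\Tr_{k(x)/k}\circ(\partial_x)$ on $(G\otimes_{\HI_\Nis}\G_m)(K)$ vanishes by Proposition \ref{p3}); precomposing with $G(K)\times K^*\to(G\otimes_{\HI_\Nis}\G_m)(K)$ and post-composing with the isomorphism $(G\otimes_{\HI_\Nis}\G_m)_{-1}\iso G$ of Theorem \ref{t1}, and using that this isomorphism is compatible with transfers (Proposition \ref{p4}, which identifies $\Tr_{C/k}$ with $\sum_x\Tr_{k(x)/k}$ under $\sG_{-1}$), we obtain $\sum_{x\in C}\Tr_{k(x)/k}\,\partial_x(a,f)=0$. \emph{Step 4: uniqueness.} Cite the uniqueness statement in \cite[\S 1]{somekawa} (resp. the Rosenlicht--Serre theory, e.g. Serre's \emph{Groupes alg\'ebriques et corps de classes}, Ch.\ III) that a family of local symbols satisfying (i)--(iv) is unique; conclude.

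The main obstacle I expect is Step 4, namely matching up \emph{exactly} the axiomatic characterization of Somekawa's local symbol and checking that properties (i)--(iii) above, in the precise quantitative form we can verify from Corollary \ref{c1}, are enough to pin down the symbol uniquely. In Rosenlicht--Serre the uniqueness uses the continuity/modulus condition (iii) in an essential way: one needs that for each $a\in G(K)$ there is an effective divisor $\mathfrak{m}$ (a modulus) such that $\partial_x(a,f)=0$ whenever $f\equiv 1\bmod \mathfrak{m}_x$, and then uses the approximation theorem to reduce an arbitrary $f$ to a product of functions each of which is either a unit at $x$ or congruent to $1$ modulo the modulus at $x$. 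Corollary \ref{c1}(b) only gives the case $v_x(f-1)>0$, i.e.\ modulus $\mathfrak{m}=(x)$; one must check this suffices, or alternatively strengthen Corollary \ref{c1}(b) to higher moduli by the same argument (switching the roles of $\sF$ and $\sG$ and using that $a$, being a section over $\sO_{X,x}$ after a finite extension, has bounded ramification). A clean alternative avoiding uniqueness altogether: directly show both symbols agree on a generating set. Since $K^*$ is generated (after localizing) by functions that are units at $x$ together with a uniformizer, and $G(K)$ by sections coming from $G(\sO_{X,x})$ up to controlled denominators, bilinearity plus Step 2 reduces everything to the single symbol $\partial_x(a,\pi)$ for $\pi$ a uniformizer and $a$ a unit section, where both formulas give $a(x)$. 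I would present the argument via the uniqueness route, flagging that the quantitative input needed is precisely Corollary \ref{c1}, and note that the reciprocity law (Proposition \ref{p3}) is what makes our symbol ``global'' in the sense required.
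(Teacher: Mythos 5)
Your proposal takes essentially the same route as the paper: verify that the local symbol of Notation \ref{n1} satisfies the characterizing axioms of the Rosenlicht--Serre local symbol (bilinearity, the unramified formula from Corollary \ref{c1}~a), the vanishing from Corollary \ref{c1}~b), and reciprocity from Proposition \ref{p3}), then invoke uniqueness. Two small points by comparison with the paper's argument. First, the worry you flag in Step 4 is not a real obstacle: Serre's axiom only demands vanishing of $\partial_x(a,f)$ when $v_x(f-1)\ge \mathfrak m_x$ for \emph{some} modulus $\mathfrak m$ with $\mathfrak m_x\ge 1$; since Corollary \ref{c1}~b) gives vanishing already when $v_x(f-1)>0$, i.e. $v_x(f-1)\ge 1\le \mathfrak m_x$, it is a \emph{stronger} vanishing than required and suffices for any modulus. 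Second, the paper first base-changes to $\bar k$ using Lemma \ref{l4}~a) before invoking Serre's characterization \cite[Ch.\ III, Prop.\ 1 and Def.\ 2]{gacl}, since the Rosenlicht--Serre theory is developed over an algebraically closed field; you should make this reduction explicit before citing the uniqueness statement.
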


\begin{proof} Up to base-changing from $k$ to $\bar k$ (see Lemma \ref{l4} a)), we may assume
$k$ algebraically closed. 
By \cite[Ch. III, Prop. 1]{gacl},
it suffices to show that the local symbol in Notation \ref{n1} 
satisfies the properties in
\cite[Ch. III, Def. 2]{gacl} which characterize 
the Rosenlicht-Serre local symbol.
In this definition, Condition i) is obvious, Condition ii) is
Corollary \ref{c1} b), Condition iii) is Corollary \ref{c1} a) and Condition iv) is Proposition
\ref{p3}. 
\end{proof}

\section{$K$-groups of Somekawa type}\label{s:som}

\begin{definition}\label{def:k-groups}
Let $\sF_1, \dots, \sF_n \in \HI_{\Nis}$.
\\
a)
A {\it relation datum of Somekawa type} for $\sF_1, \dots, \sF_n$
is a collection $(C, h, (g_i)_{i=1, \dots, n})$
of the following objects:
(i) a smooth proper connected curve $C$ over $k$,
(ii) $h \in k(C)^*$,
and
(iii) 
$g_i\in \sF_i(k(C))$ for each $i \in \{1, \dots, n\}$;
which satisfies the condition
\begin{equation}
\label{eq:con-somekawa}
\text{for any $c\in C$, there is $i(c)$ 
such that $c \in R_i$ for all $i\ne i(c)$,}
\end{equation}
where
$R_i := \{ c \in C \mid g_i \in \IM(\sF_i(\sO_{C, c}) \to \sF_i(k(C)))\}$.
\\
b)
We define the \emph{$K$-group of Somekawa type}
$K(k; \sF_1, \dots, \sF_n)$ 
to be the quotient
of 
$(\sF_1 \oo^M \dots \oo^M \sF_n)(k)$
by its subgroup generated by elements of the form
\begin{equation}\label{eq:rel-somekawa}
\sum_{c\in C} \Tr_{k(c)/k}(g_1(c)\otimes \dots \otimes
\partial_c(g_{i(c)},h)\otimes\dots\otimes g_n(c)) 
\end{equation}
where $(C, h, (g_i)_{i=1, \dots, n})$ runs through all 
relation data of Somekawa type.
\end{definition}

\begin{remark}
In view of Proposition \ref{l5},
our group $K(k; \sF_1, \dots \sF_n)$
coincides with the Milnor $K$-group defined in \cite{somekawa}
when $\sF_1, \dots, \sF_n$ are semi-abelian varieties over $k$.\footnote{As
was observed by W. Raskind, the signs appearing in
\protect\cite[(1.2.2)]{somekawa} should not be there
(cf. \cite[p. 10, footnote]{rs}).}
\end{remark}

\begin{thm}\label{p1} 
Let $\sF_1, \dots, \sF_n \in \HI_{\Nis}$.
The homomorphism \eqref{eq9} factors through 
$K(k; \sF_1, \dots, \sF_n)$.
Consequently, we get a surjective homomorphism \eqref{eq1}.
\end{thm}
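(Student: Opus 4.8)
\textbf{Proof plan for Theorem \ref{p1}.}
The plan is to show that the surjective homomorphism \eqref{eq9} (equivalently, the composite \eqref{eq2}) annihilates every generator \eqref{eq:rel-somekawa} of the defining subgroup of $K(k;\sF_1,\dots,\sF_n)$; the asserted surjective homomorphism \eqref{eq1} then follows by combining with Lemma \ref{l1}. Fix a relation datum $(C,h,(g_i))$ of Somekawa type. First I would observe that, since all the constructions are compatible with the unit map $\sF_i\otimes_{\PST}\dots\to h_0^\Nis(-)$, it suffices to prove that the image of \eqref{eq:rel-somekawa} vanishes already in $(\sF_1\otimes_{\HI_\Nis}\dots\otimes_{\HI_\Nis}\sF_n)(k)$; call this target $\sH(k)$ with $\sH=\sF_1\otimes_{\HI_\Nis}\dots\otimes_{\HI_\Nis}\sF_n$. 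Writing $\sH'=\sF_1\otimes_{\HI_\Nis}\dots\otimes_{\HI_\Nis}\G_m$ for the sheaf obtained by replacing one factor by $\G_m$, I want to produce a single element of $\sH'(k(C))$ whose boundary data at the points of $C$ are exactly the terms $g_1(c)\otimes\dots\otimes\partial_c(g_{i(c)},h)\otimes\dots\otimes g_n(c)$, after identifying $(\sF_1\otimes_{\HI_\Nis}\dots\otimes_{\HI_\Nis}\sF_n\otimes_{\HI_\Nis}\G_m)_{-1}$ with $\sH$ via Theorem \ref{t1}.

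The key step is to organize the sum over $c\in C$ as the image under the reciprocity complex of Proposition \ref{p3}. For each $c\in C$ let $i(c)$ be an index as in \eqref{eq:con-somekawa}, so that for every $i\neq i(c)$ there is $\tilde g_i\in\sF_i(\sO_{C,c})$ lifting $g_i$. The cup product construction of \S\ref{s4.4} and Notation \ref{n1} gives an element $g_1\cdot g_2\cdots\widehat{g_{i(c)}}\cdots g_n\cdot h\in (\sF_1\otimes_{\HI_\Nis}\dots\otimes_{\HI_\Nis}\G_m)(k(C))$, where the factor $\sF_{i(c)}$ is omitted and replaced by $h\in k(C)^*=\G_m(k(C))$; wait — more precisely one forms the full product $g_1\cdots g_n\cdot h$ in $\sH'(k(C))$ where $\sH'=\sF_1\otimes_{\HI_\Nis}\dots\otimes_{\HI_\Nis}\sF_n\otimes_{\HI_\Nis}\G_m$, then applies $\partial_c$. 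By Proposition \ref{l3} applied iteratively (moving the $\partial_c$ past the factors $g_i$ with $i\neq i(c)$, each of which extends to $\sO_{C,c}$), one gets
\[
\partial_c(g_1\cdots g_n\cdot h)=\Phi\bigl(g_1(c)\otimes\dots\otimes\partial_c(g_{i(c)}\cdot h)\otimes\dots\otimes g_n(c)\bigr),
\]
and $\partial_c(g_{i(c)}\cdot h)=\partial_c(g_{i(c)},h)\in\sF_{i(c)}(k(c))$ under the local-symbol identification $(\sF_{i(c)}\otimes_{\HI_\Nis}\G_m)_{-1}\cong\sF_{i(c)}$ of Theorem \ref{t1}. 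Hence the generator \eqref{eq:rel-somekawa} is exactly $\sum_{c\in C}\Tr_{k(c)/k}\,\partial_c(g_1\cdots g_n\cdot h)$, i.e. the image of $g_1\cdots g_n\cdot h\in\sH'(k(C))$ under the composite $\sG(K)\to\bigoplus_x\sG_{-1}(k(x))\xrightarrow{\sum\Tr}\sG_{-1}(k)$ of Proposition \ref{p3} with $\sG=\sH'$, followed by the identification $\sH'_{-1}(k)\cong\sH(k)$ from Theorem \ref{t1}. Proposition \ref{p3} says this composite is zero, so the generator dies in $\sH(k)$, hence in $\Hom_{\DM_-^\eff}(\Z,\sF_1[0]\otimes\dots\otimes\sF_n[0])$.

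The main obstacle I anticipate is the bookkeeping required to move $\partial_c$ past the several tensor factors cleanly: Proposition \ref{l3} handles one factor at a time, so I need to check that the iterated application is legitimate — concretely, that after pulling $\partial_c$ through the factors $\sF_i$ ($i\neq i(c)$, all of which extend to $\sO_{C,c}$) the remaining expression is genuinely $\partial_c$ applied to the $\sF_{i(c)}\otimes_{\HI_\Nis}\G_m$-part, with all the $\Phi$-maps and the Theorem \ref{t1} isomorphisms composing to the canonical identification $(\sF_1\otimes_{\HI_\Nis}\dots\otimes_{\HI_\Nis}\sF_n\otimes_{\HI_\Nis}\G_m)_{-1}\cong\sF_1\otimes_{\HI_\Nis}\dots\otimes_{\HI_\Nis}\sF_n$. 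This is where a careful statement of the associativity/compatibility of \eqref{eq16} with the monoidal structure (and with the local symbol of Notation \ref{n1}) is needed; everything else — the role of \eqref{eq:con-somekawa} in guaranteeing the lifts $\tilde g_i\in\sF_i(\sO_{C,c})$, the compatibility of $\Tr_{k(c)/k}$ with Proposition \ref{p4}, and the final descent along \eqref{eq:h0} — is formal. The surjectivity assertion in the last sentence is then immediate, since \eqref{eq9} is already surjective by \S\ref{mor1}.
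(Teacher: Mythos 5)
Your proposal is correct and takes essentially the same route as the paper: you form the product $g_1\cdots g_n\cdot h$ in $(\sF_1\otimes_{\HI_\Nis}\dots\otimes_{\HI_\Nis}\sF_n\otimes_{\HI_\Nis}\G_m)(k(C))$, iterate Proposition \ref{l3} (i.e.\ identity \eqref{eq17}) to pull $\partial_c$ through the factors that extend over $\sO_{C,c}$, identify the result with the Somekawa relation term via Notation \ref{n1} and Theorem \ref{t1}, and then invoke the reciprocity complex of Proposition \ref{p3}. This is exactly the paper's argument; your extra caution about the iterated application of \eqref{eq17} is reasonable but the paper treats it as immediate.
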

\begin{proof} 
Put $\sF := \sF_1\otimes_{\HI_\Nis} \dots \otimes_{\HI_\Nis} \sF_n$.
Let $(C, h, (g_i)_{i=1, \dots, n})$ be a relation datum of Somekawa type.
We must show that the element
\eqref{eq:rel-somekawa} goes to $0$ in $\sF(k)$ via \eqref{eq9}.
Consider the element $g=g_1\otimes\dots\otimes g_n\in \sF(K)$. 
It follows from  \eqref{eq17} that, 
for any $c\in C$, we have
\begin{multline*}
g_1(c)\otimes \dots \otimes \partial_c(g_{i(c)},h)\otimes\dots\otimes g_n(c)\\
=g_1(c)\otimes \dots \otimes \partial_c(g_{i(c)}\otimes\{h\})\otimes\dots\otimes g_n(c)=
\partial_c(g\otimes \{h\}).
\end{multline*}
The claim now follows from Proposition \ref{p3}.
\end{proof}

\section{$K$-groups of geometric type}\label{sect:6}

\begin{definition}\label{def:k'-groups}
Let $\sF_1, \dots, \sF_n \in \PST$.
\\
a)
A {\it relation datum of geometric type} for $\sF_1, \dots, \sF_n$
is a collection $(C, f, (g_i)_{i=1, \dots, n})$
of the following objects:
(i) a smooth projective connected curve $C$ over $k$,
(ii) a surjective morphism $f: C \to \mathbf{P}^1$, 
(iii) $g_i\in \sF_i(C')$ for each $i \in \{1, \dots, n\}$,
where $C'=f^{-1}(\mathbf{P}^1 \setminus \{ 1 \})$.
\\
b)
We define the \emph{$K$-group of geometric type}
$K'(k; \sF_1, \dots, \sF_n)$ 
to be the quotient
of 
$(\sF_1 \oo^M \dots \oo^M \sF_n)(k)$
by its subgroup generated by elements of the form
\begin{equation}\label{eq:rel-voevod}
  \sum_{c\in C'} v_c(f) \Tr_{k(c)/k}
  (g_1(c) \otimes \dots\otimes g_n(c)) 
\end{equation}
where $(C, f, (g_i)_{i=1, \dots, n})$ runs through all 
relation data of geometric type.
(Here we used the notation $g_i(c) = \iota_c^*(g_i) \in \sF(k(c))$,
where $\iota_c : c = \Spec k(c) \to C'$ is the closed immersion.)
\end{definition}

The rest of this section is devoted to a proof of
the following theorem:
\begin{theorem}\label{thm:k'-group}
Let $\sF_1, \dots, \sF_n \in \HI_{\Nis}$.
The homomorphism \eqref{eq9} induces an isomorphism
\begin{equation}\label{eq1'}
K'(k; \sF_1,\dots,\sF_n)\iso
\Hom_{\DM_-^\eff}(\Z,\sF_1[0]\otimes\dots\otimes \sF_n[0]).
\end{equation}
\end{theorem}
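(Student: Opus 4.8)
The strategy is to factor the surjection \eqref{eq2} through $K'(k;\sF_1,\dots,\sF_n)$ and then show the induced map is injective. First I would note that the target $\Hom_{\DM_-^\eff}(\Z,\sF_1[0]\otimes\dots\otimes\sF_n[0])$ is, by Lemma \ref{l1}, identified with $(\sF_1\otimes_{\HI_\Nis}\dots\otimes_{\HI_\Nis}\sF_n)(k)$, and \eqref{eq9} is the surjection coming from the unit map $\sF_1\oo^M\dots\oo^M\sF_n\to h_0^\Nis(\sF_1\otimes_{\PST}\dots\otimes_{\PST}\sF_n)$. So the content is: the kernel of \eqref{eq9} is exactly the subgroup generated by the geometric relation data \eqref{eq:rel-voevod}.

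The inclusion ``relations $\Rightarrow$ $0$'' (factorization) should go much like Theorem \ref{p1}: given a relation datum $(C,f,(g_i))$ of geometric type, the morphism $f:C\to\P^1$ and the section $g=g_1\otimes\dots\otimes g_n\in(\sF_1\otimes_{\PST}\dots\otimes_{\PST}\sF_n)(C')$ produce, via the reciprocity complex of Proposition \ref{p3} applied to $\sH:=\sF_1\otimes_{\HI_\Nis}\dots\otimes_{\HI_\Nis}\sF_n$ with the rational function $f$ on $C$, a relation $\sum_{c\in C}\Tr_{k(c)/k}\partial_c(g\cdot f)=0$. Using Corollary \ref{c1} a) to evaluate $\partial_c(g\cdot f)=v_c(f)\,g(c)$ at the points $c\in C'$ where $g$ extends over $\sO_{C,c}$ (this is automatic for all $c$ in the support of $\div(f)$ other than the fibers over $0,\infty$, and those are handled since $g$ is defined on all of $C'=f^{-1}(\P^1\setminus\{1\})$; the point $1$ is excluded precisely so that the local symbol contributions away from $C'$ vanish by Corollary \ref{c1} b) after a coordinate choice), one recovers exactly \eqref{eq:rel-voevod}. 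Hence \eqref{eq9} factors through $K'$.

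For surjectivity of the induced map $K'\to\Hom_{\DM_-^\eff}(\dots)$ there is nothing new: it is already surjective at the level of \eqref{eq9}. The heart is \textbf{injectivity}, i.e. showing that the kernel of \eqref{eq9} is \emph{generated} by the geometric relations — no more relations are needed. The natural approach is to unwind the definition of $h_0^\Nis$. Recall $h_0^\Nis(\sF)=\Coker(C_1(\sF)\to\sF)_\Nis$ where $C_1(\sF)(U)=\sF(U\times\Delta^1)$ with the two faces; evaluated at $\Spec k$ and using \eqref{eq11}, the kernel of $\sF(k)\to h_0(\sF)(k)$ for $\sF=\sF_1\otimes_{\PST}\dots\otimes_{\PST}\sF_n$ is generated by elements $\partial_0(\xi)-\partial_1(\xi)$ for $\xi\in\sF(\A^1)$ (sections over the affine line, i.e. ``elementary correspondences'' to $\A^1$). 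Then I would push this through the isomorphism \eqref{eq7} to rewrite such a generator as a sum of transfers of tensors $g_1(c)\otimes\dots\otimes g_n(c)$, and recognize it as a geometric relation datum by taking a smooth projective model $C$ of the relevant curve mapping to $\P^1$ (the $\A^1$ being $\P^1\setminus\{\text{the point corresponding to the distinguished value}\}$, and one must be a little careful to get $\P^1\setminus\{1\}$ rather than $\P^1\setminus\{0,\infty\}$ — this is why Definition \ref{def:k'-groups} is phrased with the point $1$, matching the normalization $1\in\A^1-\{0\}$ used throughout \S3--\S4 and in the proof of Proposition \ref{p2}). The finiteness/representability subtleties (a general $\xi\in\sF(\A^1)$ need not be supported on a smooth curve, but one can stratify or use that $\sF$ is a filtered colimit of representables, resolving into finite correspondences whose supports are smooth curves after normalization) are where the argument needs care.

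\textbf{Main obstacle.} The delicate point is the precise bookkeeping translating ``$\xi\in\sF(\A^1)$ with its two boundary values'' into a \emph{single} geometric relation datum $(C,f,(g_i))$: one must (i) produce a genuine smooth \emph{projective} curve $C$ with a finite map $f:C\to\P^1$ such that $f^{-1}(\P^1\setminus\{1\})$ carries compatible sections $g_i$ of the $\sF_i$, and (ii) verify that the alternating sum of the fiberwise transfers over $f^{-1}(0)$ and $f^{-1}(\infty)$ — which is what $\partial_0\xi-\partial_1\xi$ becomes after renaming coordinates so the distinguished value is $1$ — matches \eqref{eq:rel-voevod}, where the single sum $\sum_{c\in C'}v_c(f)\Tr_{k(c)/k}(g_1(c)\otimes\dots\otimes g_n(c))$ already packages both boundary contributions with the correct signs via the valuation $v_c(f)$. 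Getting these two descriptions to coincide, and checking that \emph{every} generator of $\ker(\eqref{eq9})$ arises this way (not just a cofinal family), is the real work; it is essentially the assertion that the Suslin complex computation of $h_0^\Nis$ in degree $0$ is, term by term, the geometric relation module. Once this identification is in hand, the isomorphism \eqref{eq1'} is immediate.
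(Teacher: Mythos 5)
Your overall strategy is right, and the injectivity half is essentially the paper's, but your argument for the factorization (``geometric relations die in the target'') takes a genuinely different route. You invoke the residue/reciprocity machinery of \S4 (Proposition \ref{p3}, Corollary \ref{c1}, Theorem \ref{t1}) exactly as in Theorem \ref{p1}; this works, but it needs the $\sF_i$ to live in $\HI_\Nis$ and drags in the whole local-symbol apparatus. The paper's Proposition \ref{prop:6-1} does it for free, entirely at the $\PST$ level and for \emph{any} $\sF\in\PST$: the graph $\gamma_{f|_{C'}}$ of $f|_{C'}\colon C'\to\Delta$ lives in $L(C')(\Delta)$, and $(i_0^*-i_\infty^*)\gamma_{f|_{C'}}=\div(f)$ in $L(C')(k)=Z_0(C')$, so $\alpha(\div(f))$ dies in $h_0(\sF)(k)$ by the very definition of $h_0$ --- no residues needed. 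The same Proposition \ref{prop:6-1} is phrased as an identity $h_0(\sF)(k)=\sF(k)/\sF(k)_{\rat}$, which already gives your ``injectivity'' direction with no work: a single $\xi\in\sF(\Delta)$ is itself the triple $(\P^1,\id_{\P^1},\xi)$, so you never need to manufacture a new projective curve $C$ at that stage --- the worry in your ``main obstacle'' about producing one is a non-issue. What \emph{is} essential, and what your sketch flags but does not carry out, is the reduction of a general $\alpha\in(\sF_1\otimes_{\PST}\dots\otimes_{\PST}\sF_n)(C')$ to a finite sum of pushforwards $\Tr_h(g_1\otimes\dots\otimes g_n)$ from normalized covers $D'\to C'$: this is Lemma \ref{lem:6-2}, proved by reducing to $\sF_i=L(X_i)$ via the presentation $\bigoplus_X L(X)\surj\sF_i$ and $L(X)\otimes_{\PST}L(Y)=L(X\times Y)$, then normalizing prime correspondences $Z\subset C'\times\prod X_i$ finite over $C'$. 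Without that step there is no identification of $\sF(k)_{\rat}$ with the subgroup generated by \eqref{eq:rel-voevod}, and hence no theorem; so be sure to regard it as the core of the proof rather than as bookkeeping.
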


\begin{para}\label{sect:6-1}
For a smooth variety $X$ over $k$,
denote as usual by $L(X)$ the Nisnevich sheaf with transfers
represented by $X$.
Recall that $L(X)(U) = c(U, X)$ 
is the group of finite correspondences
for any smooth variety $U$ over $k$,
viz. the free abelian group on
the set of closed integral subschemes of 
$U \times X$
which are finite and surjective over some irreducible component of $U$.
A morphism $X \to X'$ of smooth varieties
induces a map $L(X) \to L(X')$ of 
Nisnevich sheaves with transfers.

We recall two facts from \cite[p. 206]{voetri},
which are fundamental in the definition of
the tensor product in $\PST$.
\begin{enumerate}
\item
For any $\sF \in \PST$,
there is a surjective map
$\oplus_X L(X) \to \sF$ 
of presheaves with transfers,
where $X$ runs through a (huge) set of smooth varieties over $k$.
\item
We have (by definition)
$L(X) \otimes_{\PST} L(Y) = L(X \times Y)$
for smooth varieties $X$ and $Y$.
\end{enumerate}
\end{para}

\begin{para}\label{sect:6-2}
Let $\sF \in \PST$.
Suppose that we are given the following data:
(i) a smooth projective connected curve $C$ over $k$,
(ii) a surjective morphism $f: C \to \mathbf{P}^1$, 
(iii) a map $\alpha : L(C') \to \sF$ in $\PST$,
where $C'=f^{-1}(\Delta)$ and
$\Delta = \P^1 \setminus \{ 1 \} (\cong \A^1)$.
To such a triple $(C, f, \alpha)$,
we associate an element
\begin{equation}\label{eq:relation}
\alpha( \div(f) ) \in \sF(k),
\end{equation}
where we regard 
$\div(f)$ as an element of 
$Z_0(C') = c(\Spec k, C') = L(C')(k).$

One can rewrite the element \eqref{eq:relation} as follows.
The map $\alpha : L(C') \to \sF$ 
can be regarded as a section $\alpha \in \sF(C')$.
To each closed point $c \in C'$,
we write $\alpha(c)$ for the image of $\alpha$ in $\sF(k(c))$
by the map induced by $c = \Spec k(c) \to C'$.
Now \eqref{eq:relation} is rewritten as
\begin{equation}\label{eq:relation'}
  \sum_{c \in C'} v_c(f) \Tr_{k(c)/k} \alpha(c). 
\end{equation}
\end{para}

\begin{proposition}\label{prop:6-1}
Let $\sF \in \PST$.
We define $\sF(k)_{\rat}$ to be the subgroup of $\sF(k)$
generated by elements \eqref{eq:relation} 
for all triples $(C, f, \alpha)$
as in \S \ref{sect:6-2}.
Then we have
\[ h_0(\sF)(k) = \sF(k)/\sF(k)_{\rat}.
\]
\end{proposition}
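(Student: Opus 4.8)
The plan is to identify $\sF(k)_\rat$ with the image in $\sF(k)$ of the subgroup of $C_1(\sF)(k)$ responsible for the cokernel formula $h_0(\sF) = \operatorname{Coker}(C_1(\sF)\to \sF)$. Recall $C_1(\sF)(U) = \sF(U\times\Delta^1)$ where $\Delta^1 = \A^1$ (or, in Voevodsky's normalization, $\sF(U\times\A^1)$ with the two face maps $\partial_0,\partial_1$ induced by the points $0,1$), and $h_0(\sF)(U) = \operatorname{Coker}(\partial_0-\partial_1 : \sF(U\times\A^1)\to\sF(U))$. Specializing to $U = \Spec k$ and using \eqref{eq11}, we get $h_0(\sF)(k) = \sF(k)/N$, where $N$ is the subgroup generated by $(\partial_0-\partial_1)(\beta)$ for $\beta\in\sF(\A^1)$. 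So the proposition is equivalent to the assertion $N = \sF(k)_\rat$.

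For the inclusion $\sF(k)_\rat\subseteq N$: given a triple $(C,f,\alpha)$ with $\alpha\in\sF(C')$, I would pull $\alpha$ back along a suitable correspondence. The morphism $f:C\to\P^1$ restricts to $f:C'\to\Delta\cong\A^1$, and the graph of this $f$ gives a finite correspondence, hence a map $L(\A^1)\to L(C')$; composing with $\alpha$ gives $\beta\in\sF(\A^1)$. The key computation is that $(\partial_0 - \partial_1)(\beta) = \alpha(f^{-1}(0)) - \alpha(f^{-1}(1))$ as elements of $\sF(k)$, where $f^{-1}(0), f^{-1}(1)$ denote the corresponding zero-cycles pulled back via $\alpha$. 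Since $1\notin\Delta$, the point $1$ is not in the image of $C'$, so the contribution of $f^{-1}(1)$ must be reinterpreted: the correct statement is that $\div(f)$, as a zero-cycle on $C'$, equals $f^{-1}(0) - f^{-1}(\infty)$ truncated to $C'$, but since $\infty$ may or may not lie over $1$... Actually the cleanest route is to choose coordinates so that $\Delta = \P^1\setminus\{1\}$ with the rational function $g = (f-0)/(f-\infty)$ type normalization; more precisely, one uses that on $\A^1$ with coordinate $t$, the principal divisor of the coordinate function, restricted appropriately, realizes $(\partial_0-\partial_1)$. I would set this up so that $\div(f)|_{C'} = \alpha$-pullback of $(\{0\text{-locus}\} - \{1\text{-locus}\})$ and conclude $\alpha(\div f) = (\partial_0-\partial_1)(\beta)\in N$.

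For the reverse inclusion $N\subseteq\sF(k)_\rat$: given $\beta\in\sF(\A^1)$, by \S\ref{sect:6-1}(1) it suffices to treat $\beta$ coming from a generator, i.e. factoring through some $L(X)\to\sF$; but more directly, $\beta\in\sF(\A^1)$ corresponds to a map $L(\A^1)\to\sF$, equivalently, taking $C = \P^1$, $f = \id_{\P^1}$ (so $C' = \Delta = \A^1$), and $\alpha = \beta$, the triple $(\P^1,\id,\beta)$ has associated element $\beta(\div(\id_{\P^1})|_{\A^1})$. Since $\div$ of the coordinate on $\P^1$ is $[0]-[\infty]$, but $\infty\notin\A^1$... this forces a slightly different choice: take $f$ to be a coordinate with divisor $[0] - [1]$ on $\P^1$, so that $C' = \P^1\setminus\{1\}$ contains $0$, and $\div(f)|_{C'} = [0]$. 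Hmm — this gives only $\beta(0)$, not $(\partial_0-\partial_1)(\beta)$. The resolution is that one should use the function $f$ with $\div(f) = [0]-[1]$; then $C' = \P^1\setminus\{1\}$ and $\div(f)|_{C'} = [0]$, but this misses the $[1]$ term. So instead I would observe that $N$ is also generated by $\partial_0(\beta) - \partial_1(\beta)$ where we may replace $\beta$ by $\beta' = \beta\circ(\text{translation})$, and play the two choices of base point $\{0,\infty\}$ vs $\{0,1\}$ off each other; equivalently, use that $h_0(\sF)$ is independent of the choice of two rational points of $\P^1$, which is standard. Concretely, take $C = \P^1$, $f$ a degree-one map with $\div(f) = [0]-[1]$, $C' = \A^1 = \P^1\setminus\{1\}$, and $\alpha\in\sF(C')$ arbitrary; then the associated element \eqref{eq:relation'} is $\alpha(0)$ (a single term since $v_1(f)$ is computed on $C$ but $1\notin C'$)—no. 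Let me reconsider: on $C = \P^1$, $v_1(f) = -1$ and $v_0(f) = 1$, but $1\notin C'$ so only $c=0$ contributes to \eqref{eq:relation'}, giving $\alpha(0)$. That is not enough. The right move: take $f$ with $\div(f) = [0] + [P] - [1] - [Q]$ is overcomplicating. Simplest: take $C=\P^1$, and $f(t) = t$ viewed with target $\P^1$ where we delete the point $1$; then $C' = \P^1\setminus f^{-1}(1) = \P^1\setminus\{1\}=\A^1$, $\div(f) = [0]-[\infty]$, and both $0,\infty\in C'$, so \eqref{eq:relation'} gives $\alpha(0) - \alpha(\infty)$. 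Now $\A^1 = \P^1\setminus\{1\}$ has coordinate, say, $u = 1/(1-t)$ sending $0\mapsto 1$, $\infty\mapsto 0$... I would choose the isomorphism $\Delta\cong\A^1$ so that under it $\{0,\infty\}\subset\P^1$ map to $\{0,1\}\subset\A^1$; then $\alpha(0)-\alpha(\infty) = \partial_0(\beta)-\partial_1(\beta)$ for the corresponding $\beta\in\sF(\A^1)$, giving $N\subseteq\sF(k)_\rat$.

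The main obstacle is bookkeeping the identification of the zero-cycle $\div(f)$ on $C'$ with the face-map difference $\partial_0 - \partial_1$ under the chosen coordinate on $\Delta = \P^1\setminus\{1\}\cong\A^1$, together with checking functoriality so that a general $\alpha:L(C')\to\sF$ (not just one factoring through a representable) is handled — for this I would compose with the finite correspondence given by the graph of $f:C'\to\Delta$ to reduce the general triple $(C,f,\alpha)$ to the case $C = \P^1$, $f = \id$, at the cost of replacing $\alpha$ by $\alpha\circ\Gamma_f$, and then apply the $\P^1$-case computed above. Once both inclusions are in place, the proposition follows from $h_0(\sF)(k) = \sF(k)/N = \sF(k)/\sF(k)_\rat$.
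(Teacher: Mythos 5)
Your proof is correct and takes essentially the same route as the paper: you realize $h_0(\sF)(k)$ as $\operatorname{Coker}(i_0^*-i_\infty^*:\sF(\Delta)\to\sF(k))$ with $\Delta=\P^1\setminus\{1\}$, use the graph of $f|_{C'}:C'\to\Delta$ as a finite correspondence to reduce the forward inclusion to a commutative-diagram computation showing $\alpha(\div f)\in\operatorname{Im}(i_0^*-i_\infty^*)$, and realize the reverse inclusion via the triple $(\P^1,\id_{\P^1},\alpha)$ with $\div(\id)=[0]-[\infty]\subset\Delta$. The paper's write-up is cleaner only because it uses the two rational points $0,\infty\in\Delta$ from the outset rather than shuttling between $\{0,1\}\subset\A^1$ and $\{0,\infty\}\subset\P^1\setminus\{1\}$, which is what causes most of your detours.
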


\begin{proof}
By definition we have
\[ h_0(\sF)(k) 
= \Coker(i_0^* - i_{\infty}^* : \sF(\Delta) \to \sF(k)),
\]
where 
$\Delta = \P^1 \setminus \{ 1 \} (\cong \A^1)$ and
$i_a^*$ is the pull-back by the inclusion $i_a : \{ a \} \to \Delta$
for $a \in \{0, \infty \}$.

Suppose we are given a triple $(C, f, \alpha)$ as in \S \ref{sect:6-2},
and set $C' = f^{-1}(\Delta)$.
The graph $\gamma_{f|_{C'}}$ of $f|_{C'}$
defines an element of $c(\Delta, C') = L(C')(\Delta)$.
In the commutative diagram
\[
\begin{matrix}
L(C')(\Delta) & \overset{\alpha}{\to} & \sF(\Delta)
\\
{}_{i_0^*-i_{\infty}^*} \downarrow & & \downarrow_{i_0^*-i_{\infty}^*}
\\
L(C')(k) & \overset{\alpha}{\to} & \sF(k),
\end{matrix}
\]
the image of $\gamma_{f|_{C'}}$ in $L(C')(k) = Z_0(C')$
is $\div(f)$,
which shows the vanishing of $\alpha (\div(f))$ in $h_0(\sF)(k)$.

Conversely, given $\alpha \in \sF(\Delta)$,
\eqref{eq:relation}
for the triple $(\mathbf{P}^1, \id_{\mathbf{P}^1}, \alpha)$
coincides with $(i_0^*-i_{\infty}^*)(\alpha)$.
This completes the proof.
\end{proof}

\begin{lemma}\label{lem:6-2}
Let $\sF_1, \dots, \sF_n \in \PST$.
Put $\sF := \sF_1\otimes_{\PST} \dots \otimes_{\PST} \sF_n$.
Let $(C, f, \alpha)$ be a triple considered in \S \ref{sect:6-2}.
Then $\alpha \in \sF(C')$ is the sum of a finite number of elements of
the form
\begin{equation}\label{eq:tensordec}
 \Tr_{h}(g_1 \otimes \dots \otimes g_n),
\end{equation}
where $D$ is a smooth projective curve,
$h : D \to C$ is a surjective morphism,
$g_i \in \sF_i(h^{-1}(C'))$ for $i=1, \dots, n$,
and $\Tr_h : \sF(h^{-1}(C')) \to \sF(C')$
is the transfer with respect to $h|_{h^{-1}(C')}$.
\end{lemma}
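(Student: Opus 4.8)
The plan is to unwind the definitions and reduce everything to the two fundamental facts about the tensor product in $\PST$ recalled in \S\ref{sect:6-1}. By fact (1), there is a surjection $\bigoplus_X L(X) \to \sF_i$ for each $i$; but here we need something more structured, using that $\alpha$ lives over the $1$-dimensional base $C'$. The key point is that the element $\alpha \in \sF(C') = (\sF_1\otimes_{\PST}\dots\otimes_{\PST}\sF_n)(C')$ is, by the very construction of the tensor product of presheaves with transfers, a finite sum of elementary tensors pulled along finite correspondences. More precisely, the tensor product presheaf is the sheafification-free quotient: $(\sF_1\otimes_{\PST}\dots\otimes_{\PST}\sF_n)$ is a quotient of a presheaf built from $L(X_1)\otimes\dots\otimes L(X_n) = L(X_1\times\dots\times X_n)$, and evaluating at $C'$ identifies a section with a $\Z$-linear combination of finite correspondences $Z \subset C' \times (X_1\times\dots\times X_n)$ finite surjective over $C'$, decorated by sections $g_i$ of $\sF_i$.

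First I would reduce to the case where $\alpha$ is a single elementary tensor: by linearity of \eqref{eq:tensordec} in $\alpha$ (the transfer $\Tr_h$ is additive and the decomposition is closed under sums), it suffices to treat one generator. So assume $\alpha$ corresponds to a datum $(Z; a_1, \dots, a_n)$ where $Z \subset C' \times X_1 \times \dots \times X_n$ is integral, finite and surjective over $C'$, and $a_i \in \sF_i(X_i)$. Next I would take $D$ to be the normalization of the closure of (the image of) $Z$ in $\bar C \times \bar X_1 \times \dots$ — or more simply the normalization of $Z$ itself in $C \times X_1 \times \dots \times X_n$, compactified to a smooth projective curve; the projection $Z \to C'$ extends to a finite surjective morphism $h: D \to C$ of smooth projective curves. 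Then $h^{-1}(C')$ receives the maps $p_i: h^{-1}(C') \to X_i$ coming from the coordinate projections (these are defined because $h^{-1}(C')$ is the normalization of $Z$, which maps to each $X_i$), and I set $g_i := p_i^*(a_i) \in \sF_i(h^{-1}(C'))$. The defining property of the transfer on the tensor product presheaf — that $\Tr_h$ applied to an honest external product of pullbacks recovers the correspondence-decorated section $\alpha$ — gives $\alpha = \Tr_h(g_1 \otimes \dots \otimes g_n)$ in $\sF(C')$. This is essentially the statement that the correspondence $Z$, viewed as a map $L(C') \to L(X_1\times\dots\times X_n)$, factors as $L(C') \to L(D') \to L(X_1)\otimes\dots\otimes L(X_n)$ where $D' = h^{-1}(C')$ and the first map is the transfer along the graph of $h$, followed by the product of the $p_i$.

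The main obstacle is bookkeeping around compactification and smoothness: the curve $D$ must be smooth \emph{projective} so that the triple $(D, ?, \alpha')$ fits the framework of \S\ref{sect:6-2}, but the $X_i$ need not be curves and $Z$ lives over the open $C'$, so one must carefully check that normalizing and compactifying does not disturb the finiteness of $h$ over $C$ (it does not, since finiteness is preserved under normalization for curves) and that the maps $p_i$ extend to all of $D' = h^{-1}(C')$, not just over $C'$ — which is automatic because $D'$ is exactly the preimage, hence lies over $C'$ where the original maps were defined. A secondary subtlety is that the transfer $\Tr_h$ on $\sF = \sF_1\otimes_{\PST}\dots\otimes_{\PST}\sF_n$ must be checked to be compatible with the external-product decomposition, i.e. that $\Tr_h(g_1\otimes\dots\otimes g_n)$ equals the section of $\sF$ over $C'$ determined by the correspondence $Z$ together with the $a_i$; this is a direct but slightly tedious verification from the construction of $\otimes_{\PST}$ in \cite[p. 206]{voetri}, using that $L(D) \otimes_{\PST}(\text{sections}) \to L(C)\otimes_{\PST}(\text{sections})$ is given precisely by composing with the graph correspondence of $h$. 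Once these compatibilities are in place, summing over the finitely many generators appearing in $\alpha$ finishes the proof.
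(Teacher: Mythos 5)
Your proposal is correct and follows essentially the same path as the paper: reduce to (sums of) elementary tensors coming from representable presheaves, normalize the correspondence $Z$ to get a finite cover $D' \to C'$, compactify to $h : D \to C$, define $g_i$ by pulling back along the coordinate projections, and observe that $\Tr_h(g_1\otimes\dots\otimes g_n)$ recovers $Z$ because normalization is birational. The paper phrases the reduction more crisply by first replacing each $\sF_i$ with $L(X_i)$ via the facts in \S\ref{sect:6-1}, whereas you keep the sections $a_i \in \sF_i(X_i)$ around, but the underlying argument is the same.
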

\begin{proof}
By the facts recalled in \S \ref{sect:6-1},
we are reduced to the case 
$\sF_i = L(X_i)$ where $X_i$ is a smooth variety over $k$
for each $i=1, \dots, n$.
Then we have $\sF=L(X)$ with $X=X_1 \times \dots \times X_n$.
Let $Z$ be an integral closed subscheme of $C' \times X$
which is finite and surjective over $C'$.
It suffices to show that $Z \in c(C', X)=L(X)(C')$
can be written as \eqref{eq:tensordec}.

Let $q : D' \to Z$ be the normalization,
and let $h : D' \to C'$ be the composition $D' \to Z \to C'$,
so that $h$ is a finite surjective morphism.
For $i=1, \dots, n$,
we define $g_i \in c(D', X_i) = L(X_i)(D')$ 
to be the graph of $D' \to X \to X_i$.
If we set
$g = g_1 \otimes \dots \otimes g_n \in L(X)(D')$,
then by definition we have
$\Tr_h(g) = Z$ in $L(X)(C')$.
The assertion is proved.
\end{proof}

\begin{para}
Now it follows from Definition \ref{def:k'-groups} b),
Proposition \ref{prop:6-1}, 
Lemma \ref{lem:6-2} and \eqref{eq:relation'}
that \eqref{eq7} and \eqref{eq:h0} induce an isomorphism
\[
  K'(k; \sF_1, \dots, \sF_n) \cong
  h_0(\sF_1 \otimes_{\PST} \dots \otimes_{\PST} \sF_n)(k) 
\]
for any $\sF_1, \dots, \sF_n \in \PST$.
If $\sF_1, \dots, \sF_n \in \HI_\Nis$, the right hand side is
canonically isomorphic to
$\Hom_{\DM_-^\eff}(\Z, \sF_1[0]\otimes\dots\otimes \sF_n[0])$
by  \eqref{eq:dm} + \eqref{eq11}.
This completes the proof of Theorem \ref{thm:k'-group}. \qed
\end{para}

\section{Milnor $K$-theory}

\begin{para}
Let $\sF_1, \dots, \sF_n \in \HI_\Nis$.
We obtained a surjective homomorphism
\begin{equation}\label{eq:twokgroups}
   K(k; \sF_1, \dots, \sF_n) \to K'(k; \sF_1, \dots, \sF_n).
\end{equation}

Our aim is to show that this map is bijective. The first step is the special case of the multiplicative groups.
\end{para}

\begin{proposition}\label{prop:milnork}
When $\sF_1=\dots=\sF_n=\G_m$,
the map \eqref{eq:twokgroups} is bijective.
\end{proposition}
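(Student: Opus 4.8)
The plan is to identify both sides of the map \eqref{eq:twokgroups} with the Milnor $K$-group $K_n^M(k)$, so that the map becomes the identity. Concretely, I would first recall that by \cite[Theorem 1.4]{somekawa} the group $K(k;\G_m,\dots,\G_m)$ is canonically isomorphic to $K_n^M(k)$, the isomorphism sending a symbol $\{a_1,\dots,a_n\}$ (with $a_i\in k^*$, viewed via $\bigoplus_{Y\to\Spec k}$ as an element supported on $\Spec k$) to the element $a_1\otimes\dots\otimes a_n$ of $(\G_m\oo^M\dots\oo^M\G_m)(k)$. On the other side, by Theorem \ref{thm:k'-group} combined with \eqref{eq10} and the isomorphism $\G_m[0]\simeq\Z(1)[1]$ of \cite[Cor. 3.4.3]{voetri}, we have
\[
K'(k;\G_m,\dots,\G_m)\cong \Hom_{\DM_-^\eff}(\Z,\G_m[0]^{\otimes n})\cong \Hom_{\DM_-^\eff}(\Z,\Z(n)[n])=H^n(k,\Z(n)),
\]
which is the motivic cohomology group in weight $n$ and degree $n$.

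The key step is then to check that the composite surjection
\[
K_n^M(k)\cong K(k;\G_m,\dots,\G_m)\twoheadrightarrow K'(k;\G_m,\dots,\G_m)\cong H^n(k,\Z(n))
\]
coincides, up to the known identifications, with the Suslin–Voevodsky map \eqref{eq:sv}, which is an isomorphism. For this I would trace a symbol $\{a_1,\dots,a_n\}$ through the construction \eqref{eq2}: the element $a_1\otimes\dots\otimes a_n\in(\G_m\oo^M\dots\oo^M\G_m)(k)$ maps under \eqref{eq7} to the corresponding element of $(\G_m^{\otimes_{\PST} n})(k)$, then under \eqref{eq:h0} to its class in $h_0(\G_m^{\otimes_{\PST} n})(k)=K'(k;\G_m,\dots,\G_m)$, and finally under \eqref{eq:dm} to an element of $H^n(k,\Z(n))$; one identifies this last element with the cup product of the classes $[a_i]\in H^1(k,\Z(1))=k^*$, which is precisely the image of $\{a_1,\dots,a_n\}$ under the Suslin–Voevodsky homomorphism. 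Since that homomorphism is bijective by \cite[Thm. 3.4]{sus-voe} (or \cite[Thm. 5.1]{mvw}) and $k$ is perfect, the surjection \eqref{eq:twokgroups} must be bijective as well.

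Alternatively — and this is probably the cleaner route — one can avoid naming the Suslin–Voevodsky map explicitly and instead produce an inverse directly. Since \eqref{eq:twokgroups} is already known to be surjective, it suffices to construct a map $K'(k;\G_m,\dots,\G_m)\to K(k;\G_m,\dots,\G_m)$ splitting it, or simply to show $K(k;\G_m,\dots,\G_m)$ and $K'(k;\G_m,\dots,\G_m)$ have the same cardinality/structure via their common identification with $K_n^M(k)$. The relations of geometric type \eqref{eq:rel-voevod} in the case $\sF_i=\G_m$, applied with $g_i$ the restriction of a coordinate function, reproduce exactly Weil reciprocity for rational functions on $C$, which is also (essentially by \cite[Theorem 1.4]{somekawa}) the set of relations defining $K(k;\G_m,\dots,\G_m)$; hence the surjection \eqref{eq:twokgroups} identifies two presentations of the same group.

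The main obstacle I anticipate is the bookkeeping in the second paragraph: verifying that the image of $a_1\otimes\dots\otimes a_n$ under the composite \eqref{eq2} really is the $n$-fold cup product in motivic cohomology requires unwinding the definition of the tensor product on $\DM_-^\eff$, the Gysin/cancellation identifications used in Proposition \ref{p2} and Theorem \ref{t1}, and the compatibility of $h_0^\Nis$ with $\otimes$ from \eqref{eq15}. This is a compatibility of several adjunctions and canonical isomorphisms that is conceptually clear but tedious to pin down; fortunately, once it is established, bijectivity is immediate from the known bijectivity of \eqref{eq:sv}, and no genuinely new input (in particular, no Steinberg-relation computation) is needed here — that difficulty is deferred to the general case treated later.
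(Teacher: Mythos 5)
Your first (and main) route, which identifies $K'(k;\G_m,\dots,\G_m)$ with $H^n(k,\Z(n))$ and then invokes the Suslin--Voevodsky isomorphism, works logically but defeats one of the stated aims of the paper. In \S\ref{para:kato} the authors explicitly present the case $\sF_1=\dots=\sF_n=\G_m$ of Theorem \ref{tmain} as giving ``a new and less combinatorial proof'' of \eqref{eq:sv}; since Proposition \ref{prop:milnork} feeds into the proof of Theorem \ref{tmain} (through Proposition \ref{p10.1}, \S\ref{10.5}, Lemma \ref{l11.2}, etc.), invoking \eqref{eq:sv} here would make that application circular. Moreover the compatibility you flag as ``the main obstacle'' --- that the composite sends a Milnor symbol to the $n$-fold cup product under all the identifications --- is genuine extra work which the paper's argument avoids entirely.

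Your ``cleaner route'' is essentially the paper's proof, but stated imprecisely. The correct reduction is: since \eqref{eq:twokgroups} is already a surjection, bijectivity is equivalent to the vanishing in $K(k;\G_m,\dots,\G_m)$ of the geometric-type relations \eqref{eq:rel-voevod}. Via Somekawa's isomorphism $K(k;\G_m,\dots,\G_m)\cong K_n^M(k)$, each such relation becomes (up to sign and after observing that the contributions from $c\in f^{-1}(1)$ die because $f\equiv 1$ there) the boundary sum $\sum_{c\in C} N_{k(c)/k}\,\partial_c\{g_1,\dots,g_n,f\}$, which is zero by classical Weil reciprocity for Milnor $K$-theory \cite[Ch.\ I, (5.4)]{bt}. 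Phrases like ``show the two groups have the same cardinality/structure'' and ``identifies two presentations of the same group'' do not constitute an argument: having the same target presentation is not a reason for a given surjection between two quotients to be injective. The precise diagram-chase observation above is what is needed, and it is the one-line argument the paper uses; you should replace your heuristic formulation with it, and drop the Suslin--Voevodsky route entirely for the reasons above.
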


\begin{proof}
It suffices to show that relations \eqref{eq:rel-voevod}
vanish in  $K(k; \G_m, \dots, \G_m)$.
Because of Somekawa's isomorphism
\cite[Theorem 1.4]{somekawa}
\begin{equation}\label{eq:somekawaiso}
  K(k; \G_m, \dots, \G_m) \cong K_n^M(k)
\end{equation}
given by 
$\{ x_1, \dots, x_n \}_{E/k} \mapsto N_{E/k}(\{ x_1, \dots, x_n \})$,
it suffices to show this vanishing in the usual Milnor $K$-group $K_n^M(k)$,
which follows from Weil reciprocity \cite[Ch. I, (5.4)]{bt}.
\end{proof}


The following lemmas appear to be crucial in the proof of the main theorem.

\begin{lemma}\label{l7.3}
Let $C$ be a smooth projective connected curve over $k$,
and let $Z= \{ p_1, \dots, p_s \}$ be 
a finite set of closed points of $C$.
If $k$ is infinite, then
we have $K_2^M(k(C)) = \{ k(C)^*, \sO_{C,Z}^* \}$.
\end{lemma}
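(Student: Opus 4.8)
The statement asserts that for an infinite field $k$, a smooth projective connected curve $C/k$, and a finite set $Z = \{p_1, \dots, p_s\}$ of closed points, every symbol in $K_2^M(k(C))$ can be represented using only functions that are units at every point of $Z$ — i.e.\ $K_2^M(k(C)) = \{k(C)^*, \sO_{C,Z}^*\}$. The plan is to reduce to moving one slot of a symbol $\{f, g\}$ at a time, and there the key move is to multiply $f$ (say) by a suitably chosen function $u \in k(C)^*$ so that $fu$ becomes a unit along $Z$ without disturbing what we have already arranged; the Steinberg relation is what lets us absorb the correction term.

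\textbf{First step: reduce to a single symbol and then to fixing one entry.} Since $K_2^M(k(C))$ is generated by symbols $\{f, g\}$ with $f, g \in k(C)^*$, it suffices to show each such $\{f,g\}$ lies in $\{k(C)^*, \sO_{C,Z}^*\}$. By bilinearity and the symbol relation $\{f,g\} = \{fu, g\} - \{u, g\}$ for any $u$, it is enough to prove: (a) given any $f \in k(C)^*$, there is $u \in k(C)^*$ with $fu \in \sO_{C,Z}^*$; and then, working with $\{f', g\}$ where $f' := fu \in \sO_{C,Z}^*$ is fixed, (b) there is $v \in \sO_{C,Z}^*$ (note: we must not spoil $f'$, so $v$ itself should be a unit along $Z$, or we correct using Steinberg) such that $\{f', g\}$ is rewritten with both entries units along $Z$. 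Part (a) is the approximation statement: we need a function with prescribed (namely zero) valuations at the finitely many points of $Z$, which is a standard Riemann--Roch / Bertini-type argument on the curve $C$ — here is where $k$ infinite is used, to find such a function of controlled degree (one picks a divisor $D$ supported away from $Z$ with $\deg D$ large, so that $L(D - \sum n_i p_i)$ contains a function with exactly the right pole/zero orders along $Z$; infinitude of $k$ guarantees the linear system is nonempty over $k$ and not forced into bad position).

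\textbf{Second step: the Steinberg manoeuvre for the second entry.} Having reduced to $\{f', g\}$ with $f' \in \sO_{C,Z}^*$, I want to replace $g$ by $g'$ that is also a unit along $Z$. Write, for each $p_i \in Z$, the local behaviour of $g$; the obstruction to $g$ being a unit at $p_i$ is $v_{p_i}(g) \ne 0$. The idea is to find $w \in k(C)^*$ with $v_{p_i}(w) = v_{p_i}(g)$ for all $i$ and then $\{f', g\} = \{f', g/w\} + \{f', w\}$, where $g/w \in \sO_{C,Z}^*$. This reduces us to handling $\{f', w\}$ where now $w$ has \emph{prescribed} valuations along $Z$ but we have freedom in choosing $w$ off $Z$. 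The crucial trick — this is the role of the Steinberg relation flagged in the paper's introduction — is: since $k$ is infinite, we may arrange $w$ so that $f'$ and $w$ are "in general position," in particular so that $1 - f'$ and $w$ (or some such combination forced by Steinberg $\{a, 1-a\}=0$) lets us rewrite $\{f', w\}$ as a sum of symbols each of whose entries is manifestly a unit at $Z$. Concretely one chooses $w$ avoiding the finitely many "bad" zeros and poles of $f'$ and $1-f'$ outside $Z$, which is possible by a moving-lemma argument over the infinite field $k$.

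\textbf{Main obstacle.} The genuine difficulty is the general-position choice in the second step: arranging simultaneously the prescribed valuations of the correcting function along $Z$ \emph{and} enough transversality with the divisors of $f'$ and $1 - f'$ so that the Steinberg relation can clear all non-unit contributions. This is a linear-systems argument on $C$ that works precisely because $k$ is infinite (it may fail over finite fields, which is consistent with the hypothesis). I would carry it out by fixing a large effective divisor $D$ supported away from all the relevant special points, showing $\dim_k L(D)$ grows, and invoking that an infinite field cannot be covered by finitely many proper subspaces to pick the desired function inside $L(D)$ with all the required incidence conditions; everything else (bilinearity, symbol manipulation, the two reduction steps) is formal once this is in hand.
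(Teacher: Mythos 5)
Your plan has the right flavour (reduce to the ``hard'' symbols and clear them with the Steinberg relation, using infinitude of $k$ to avoid a bad locus), but there are two genuine gaps. First, the reduction in your step (a) is not a reduction: writing $\{f,g\}=\{fu,g\}-\{u,g\}$ with $fu\in\sO_{C,Z}^*$ disposes of the first term, but the second term $\{u,g\}$ is an arbitrary symbol of the same general shape, so nothing has been gained. You then silently drop it and ``work with $\{f',g\}$'' -- and since $f'\in\sO_{C,Z}^*$ already, $\{f',g\}=-\{g,f'\}\in\{k(C)^*,\sO_{C,Z}^*\}$ and there is nothing left to do; your entire step (b), which tries to make \emph{both} entries units, is aimed at a stronger statement than the lemma asserts and is not needed. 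This suggests a misreading of the target: the lemma requires only one slot to land in $\sO_{C,Z}^*$.

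Second and more importantly, the crux of the lemma -- what to do with the symbols that remain after you split off the ``unit part'' -- is never identified. The paper's argument is short and concrete: $A=\sO_{C,Z}$ is a semi-local PID, so choosing uniformizers $\pi_1,\dots,\pi_s$ gives a factorization $k(C)^*=A^*\cdot\langle\pi_1,\dots,\pi_s\rangle$; after expanding by bilinearity the only problematic generators of $K_2^M(k(C))$ are $\{\pi_i,\pi_j\}$ with $i\ne j$ (the diagonal $\{\pi_i,\pi_i\}=\{-1,\pi_i\}$ is already fine), and these are handled by the explicit Steinberg identity $\{\pi_i,\pi_j\}=\{-\pi_i/\pi_j,\pi_i+\pi_j\}$. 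Infinitude of $k$ enters only to rescale the $\pi_i$ by constants $\mu_i\in k^*$ so that $\pi_i+\pi_j\in A^*$ for all $i\ne j$ (the bad $(\mu_i)$ lie on finitely many hyperplanes). Your sketch alludes to ``a general-position choice'' and ``a moving-lemma/Riemann--Roch argument,'' but never produces the identity that actually kills $\{\pi_i,\pi_j\}$, and the machinery you invoke is both heavier than needed and not pinned down enough to check. As it stands the proposal does not constitute a proof.
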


\begin{proof}
Let $\fp_i$ be the maximal ideal of $A=\sO_{C,Z}$ corresponding to $p_i$.
Since $A$ is a semi-local PID,
we can choose generators $\pi_1, \dots, \pi_s$ of $\fp_1,\dots,\fp_s$.
Since $k$ is infinite, we can change $\pi_i$ into $\mu_i\pi_i$ for suitable
$\mu_1,\dots,\mu_s\in k^*$ to achieve $\pi_i+\pi_j\not\equiv 0 \pmod{\fp_k}$ for $i,j,k$ all
distinct (indeed, the set of bad $(\mu_1,\dots,\mu_s)$ is contained in a finite union of
hyperplanes in $\bar k^s$). It follows that
$\pi_i +
\pi_j
\in A^*$ for all
$i\ne j$.

By the relation $\{f, -f\}=0 ~(f \in k(C)^*)$,
we have $K_2^M(k(C)) = \{ A^*, A^* \} + \sum_{i<j} \{ \pi_i, \pi_j \}$.
Now the identity
\[ \{ \pi_i, \pi_j \} 
  = \{ -\pi_i/\pi_j, \pi_i + \pi_j \}
\]
proves the lemma.
\end{proof}

\begin{lemma}\label{l7.1}
Let $C$ be a smooth projective connected curve over $k$,
and $r > 0$.
If $k$ is an infinite field,
then $K_{r+1}^M{k(C)}$ is generated by 
elements of the form
$\{a_1, \dots, a_{r+1} \}$
where the $a_i \in k(C)^*$ satisfy
$\Supp(\divi(a_i)) \cap \Supp(\divi(a_j)) = \emptyset$
for all $1 \leq i<j \leq r$.
\end{lemma}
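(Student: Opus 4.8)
\textbf{Proof plan for Lemma \ref{l7.1}.}

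The plan is to proceed by induction on $r$, using Lemma \ref{l7.3} as the key input to separate the divisor supports two indices at a time. First I would recall that $K_{r+1}^M(k(C))$ is generated by symbols $\{a_1,\dots,a_{r+1}\}$ with all $a_i \in k(C)^*$; the task is to arrange, at the cost of rewriting each such symbol as a sum of symbols, that the first $r$ entries have pairwise disjoint divisor supports. Fix one generating symbol $\{a_1,\dots,a_{r+1}\}$. The strategy is to clean up the supports inductively: suppose by induction on $m \le r$ that we have reduced to symbols whose first $m$ entries $a_1,\dots,a_m$ already have pairwise disjoint supports; I want to replace $a_{m+1}$ by elements whose support is disjoint from $\Supp(\div(a_1)) \cup \dots \cup \Supp(\div(a_m))$, without disturbing the first $m$ slots.

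The core step is the case that isolates the interaction of two fixed slots: given $a,b\in k(C)^*$, I want to write $\{a,b\}$ (times arbitrary further symbols, which will just ride along) as a sum of symbols $\{a',b'\}$ with $\Supp(\div(a'))\cap\Supp(\div(b'))=\emptyset$. Here I would invoke Lemma \ref{l7.3} applied to $K_2^M$ of the local ring $\sO_{C,Z}$, where $Z$ is chosen to contain $\Supp(\div(a))\cup\Supp(\div(b))$ together with any other finite set of points I need to avoid (namely the supports of the slots already cleaned up). Lemma \ref{l7.3} tells us that $K_2^M(k(C))=\{k(C)^*,\sO_{C,Z}^*\}$, i.e. $\{a,b\}$ can be rewritten using a second entry that is a unit at every point of $Z$, hence has support disjoint from $\Supp(\div(a))$ and from the previously-cleaned supports. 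Iterating this over the pairs $(1,m+1),(2,m+1),\dots,(m,m+1)$ — always enlarging $Z$ — pushes the support of the $(m+1)$-st entry off all of $\Supp(\div(a_1)),\dots,\Supp(\div(a_m))$ simultaneously, completing the inductive step. Note we only need the first $r$ entries disjoint, so slot $r+1$ is never touched; the induction runs $m=1,\dots,r-1$.

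The main obstacle I anticipate is bookkeeping rather than conceptual: when I use Lemma \ref{l7.3} to rewrite $\{a,b\}$, the rewriting produces a sum of symbols in which the \emph{first} entry also changes (Lemma \ref{l7.3} only constrains one of the two entries to lie in $\sO_{C,Z}^*$), so I must be careful that this does not re-contaminate the slots I already cleaned up. The fix is to arrange the order of operations so that whenever I clean slot $m+1$ against slot $i\le m$, I include in $Z$ the supports of all of $a_1,\dots,a_m$; then the entry that "changes" is the $(m+1)$-st one, which is allowed to become anything as long as its support avoids $Z$, while $a_1,\dots,a_m$ themselves are not rewritten. One must check that the multilinearity lets the extra entries $a_{m+2},\dots,a_{r+1}$ be carried through each rewriting unchanged, which is immediate since $K^M_\bullet$ is a graded ring and we are only manipulating a $K_2^M$-factor. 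Finally, the hypothesis that $k$ is infinite is used exactly once, inside Lemma \ref{l7.3}, to choose the uniformizers with $\pi_i+\pi_j$ a unit; no further appeal to infiniteness is needed here.
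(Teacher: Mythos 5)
Your overall plan — induction plus repeated use of Lemma \ref{l7.3} with an enlarging set $Z$ — is the right idea and matches the spirit of the paper's argument, but the pairing strategy you describe has a real gap. You propose to clean slot $m+1$ by rewriting the $K_2^M$-factor $\{a_i, a_{m+1}\}$ for $i \le m$, and you assert that ``$a_1,\dots,a_m$ themselves are not rewritten.'' That is not what Lemma \ref{l7.3} gives you: the equality $K_2^M(k(C)) = \{k(C)^*, \sO_{C,Z}^*\}$ rewrites $\{a_i, a_{m+1}\}$ as a sum $\sum_l\{c_l,u_l\}$ in which \emph{both} entries change; only $u_l$ is controlled (a unit on $Z$), while $c_l$ is an arbitrary element of $k(C)^*$ and in general is not $a_i$. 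So as soon as you pair slot $m+1$ with a slot $i \le m$ that is already supposed to be clean, slot $i$ is replaced by an uncontrolled $c_l$ and re-contaminates everything. Your fix (putting the supports of $a_1,\dots,a_m$ into $Z$) controls the \emph{new} entry in slot $m+1$ but does nothing to prevent the entry in slot $i$ from becoming arbitrary. You cannot escape this by iterating or enlarging $Z$, because each rewriting dirties another previously-clean slot.

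The paper avoids this by choosing the pairing so that the uncontrolled entry lands in a slot where no disjointness is demanded. Concretely, it first applies the induction hypothesis to $\{a_1,\dots,a_r\}$ only (keeping $a_{r+1}$ aside), obtaining $\sum_m \{b_{m,1},\dots,b_{m,r}\}$ with the first $r-1$ slots pairwise disjoint. Then for each $m$ it rewrites the pair $\{b_{m,r}, a_{r+1}\}$ via Lemma \ref{l7.3} with $Z=\bigcup_{j<r}\Supp(\divi(b_{m,j}))$, producing $\sum_i\{c_{m,i},d_{m,i}\}$ where $c_{m,i}$ is a unit on $Z$. Both entries of this $K_2^M$-factor change, but the uncontrolled one ($d_{m,i}$) is shoved into slot $r+1$, which has no disjointness constraint; the controlled one ($c_{m,i}$) goes into slot $r$, now disjoint from slots $1,\dots,r-1$. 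This is incompatible with your claim that ``slot $r+1$ is never touched'': slot $r+1$ \emph{must} change, because it is precisely the garbage dump that makes the argument close. If you replace your pairing $(i,m+1)$ with $(m+1,m+2)$ (or directly with $(r,r+1)$ after the initial induction), and drop the assertion that the last slot is untouched, your plan becomes essentially the paper's proof.
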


\begin{proof}
We proceed by induction on $r$.
The assertion is empty when $r=1$.
Suppose $r>1$.
Take $a_1, \dots, a_{r+1} \in k(C)^*$.
By induction, there exist $b_{m, i} \in k(C)^*$
such that
$\Supp(\divi(b_{m, i})) \cap \Supp(\divi(b_{m, j})) = \emptyset$
for all $i<j<r$ and $m$, and 
\[ \{ a_1, \dots, a_r \}  
  = \sum_m \{ b_{m, 1}, \dots, b_{m, r} \}
\]
holds in $K_r^M k(C)$.
For each $m$, 
the above lemma shows that
there exist $c_{m, i}, d_{m, i} \in k(C)^*$ such that
\[
  \Supp(\divi(c_{m, i})) \cap 
  \left(\bigcup_{j=1}^{r-1} \Supp(\divi(b_{m, j}))\right) = \emptyset
\]
and that
\[ \{ b_{m, r}, a_{r+1} \} = \sum_i \{ c_{m, i}, d_{m, i} \} \]
holds in $K_2^M k(C)$.
Then we have 
\[ \{a_1, \dots, a_{r+1} \} = \sum_{m, i} 
    \{b_{m, 1}, \dots, b_{m, r-1}, c_{m, i}, d_{m, i} \}
\]
in $K_{r+1}^M k(C)$,
and we are done.
\end{proof}

\section{$K$-groups of Milnor type}\label{sect:milnor}

We now generalize the notion of Milnor $K$-groups to arbitrary homotopy invariant Nisnevich sheaves with transfers, although we shall seriously use this generalization only for special, representable, sheaves.

\begin{para}
Let $\sF\in \HI_\Nis$. We shall call a homomorphism $\G_m\to \sF$ a \emph{cocharacter} of
$\sF$. (By Proposition \ref{p2}, the group $\Hom_{\HI_\Nis}(\G_m,\sF)$ is canonically isomorphic
to $\sF_{-1}(k)$.)

Let $\sF_1,\dots,\sF_n\in \HI_\Nis$.
Denote by
$St(k;\sF_1,\dots,\sF_n)$ the subgroup of $(\sF_1\otimes_\PST\dots\otimes_\PST\sF_n)(k)$
generated by the elements
\begin{equation}\label{eq:steinbergrel}
 a_1 \otimes \dots  \otimes \chi_i(a)
   \otimes \dots \otimes \chi_j(1-a) \otimes \dots  \otimes a_n
\end{equation}
where 
$\chi_i:\G_m\to \sF_i$, $\chi_j:\G_m\to \sF_j$ are
$2$ cocharacters with $i<j$,
$a \in k^* \setminus \{1\}$,
and $a_m \in \sF_m(k)  ~(m \not= i, j)$.
\end{para}

\begin{defn}\label{d8.1} For $\sF_1,\dots,\sF_n\in \HI_\Nis$, we write $\tilde
K(k;\sF_1,\dots,\sF_n)$ for the quotient of $(\sF_1\oo^M\dots\oo^M\sF_n)(k)$ by
the subgroup generated by
$\Tr_{E/k}St(E;\sF_1,\dots,\sF_n)$, where $E$ runs through all finite extensions of $k$. This is the \emph{$K$-group of Milnor type} associated to $\sF_1,\dots,\sF_n$.
\end{defn}

\begin{para}\label{10.3} The assignment  $k\mapsto \tilde K(k;\sF_1,\dots,\sF_n)$ inherits the
structure of a cohomological Mackey functor, which is natural in
$(\sF_1,\dots,\sF_n)$. In particular, the choice of elements
$f_i\in
\sF_i(k)=\Hom_{\HI_\Nis}(\Z,\sF_i)$ for $i=1,\dots,r$ induces a homomorphism
\begin{multline}\label{eq10.1}
\tilde K(k;\sF_{r+1},\dots,\sF_n)=\tilde K(k;\Z,\dots,\Z,\sF_{r+1},\dots,\sF_n)\\
\to \tilde
K(k;\sF_1,\dots,\sF_n).
\end{multline}
\end{para}

\begin{lemma}
Let $\sF_1, \dots, \sF_n \in \HI_{\Nis}$.
The image of $St(k; \sF_1, \dots, \sF_n)$ vanishes
in $K(k;\sF_1,\dots,\sF_n)$.
Consequently, we have a surjective homomorphism 
$\tilde K(k;\sF_1,\dots,\sF_n)\to  K(k;\sF_1,\dots,\sF_n)$
and a composite surjective homomorphism
\begin{equation}\label{eq:threekgroups}
\tilde K(k;\sF_1,\dots,\sF_n)\Surj  K'(k;\sF_1,\dots,\sF_n)
\end{equation}
\end{lemma}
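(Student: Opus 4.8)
The plan is to show that each generator \eqref{eq:steinbergrel} of $St(k;\sF_1,\dots,\sF_n)$ dies in $K(k;\sF_1,\dots,\sF_n)$, by producing an explicit relation datum of Somekawa type whose associated element \eqref{eq:rel-somekawa} equals (up to the sign and the defining relations of the Mackey tensor product) that generator. This is the idea ``use the Steinberg relation to create Weil reciprocity relations'' advertised in the introduction. The model case is the one curve $C = \mathbf{P}^1$ with coordinate $t$, function $h = 1 - t$, and the cocharacters $\chi_i,\chi_j$ used to push the scalar parameter into $\sF_i$ and $\sF_j$.

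First I would set up the datum. Take $C=\mathbf{P}^1_k$ with function field $K = k(t)$. Given cocharacters $\chi_i\colon \G_m\to \sF_i$, $\chi_j\colon \G_m\to\sF_j$, a scalar $a\in k^*\setminus\{1\}$, and elements $a_m\in\sF_m(k)$ for $m\neq i,j$, define $g_i = \chi_i(a)\in \sF_i(k)\subset \sF_i(K)$ (constant section), $g_j = \chi_j(t)\in\sF_j(K)$ (the ``tautological'' section, i.e.\ the image of $t\in K^*=\G_m(K)$ under $\chi_j$), $g_m = a_m$ for $m\neq i,j$, and $h = 1-t\in K^*$. The role of $i(c)$ in \eqref{eq:con-somekawa}: for every closed point $c$ of $\mathbf{P}^1$ other than $0$, the section $g_j=\chi_j(t)$ extends over $\sO_{C,c}$ (since $t$ is a unit or regular there), and all $g_m$ with $m\neq i,j$ are already constant, so $R_m=C$ for $m\neq j$ and $R_j\supseteq C\setminus\{0\}$; thus one may take $i(c)=j$ for $c\neq 0$ and $i(0)=i$ (or anything, since at $c=0$ we will show the relevant residue term vanishes). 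More precisely I would choose to compute the symbol against $h$ along the index $j$ at $c\neq 0,1,\infty$ and against index $i$ at $c=0$, and check the condition \eqref{eq:con-somekawa} is satisfied.

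Next I would compute the residue terms. At a closed point $c\notin\{0,1,\infty\}$ one has $v_c(1-t)=0$, so by Corollary \ref{c1} b) (or rather the extension of its argument to a general factor, i.e.\ Proposition \ref{l3} with the roles arranged so that $h=1-t$ is a unit at $c$) the symbol $\partial_c(g_j,h)=0$; the same holds at $c=\infty$ since there $v_\infty(1-t)=-1$ but $g_j=\chi_j(t)$ does \emph{not} extend there, so one instead routes the symbol through index $i$ where $g_i=\chi_i(a)$ is constant and $a$ is a unit, giving $0$ again by Corollary \ref{c1} b) applied to the constant section. The only surviving points are $c=1$ and $c=0$. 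At $c=1$: here $v_1(1-t)=1$ and $g_j=\chi_j(t)$ extends with value $\chi_j(1)$, so Corollary \ref{c1} a) gives $\partial_1(g_j,h)=v_1(1-t)\,\chi_j(1)=\chi_j(1)$, and the term is $\Tr_{k/k}\big(a_1\otimes\cdots\otimes\chi_i(a)\otimes\cdots\otimes\chi_j(1)\otimes\cdots\big)$, which is the part of \eqref{eq:steinbergrel} we want — except we need $1-a$, not $1$, in the $j$-th slot. The fix is the standard one: replace the single curve by the one carrying $h=\tfrac{1-t}{1-a}$, or equivalently observe that $\chi_j(1)=0$ in $\sF_j(k)$ is wrong — instead use $h=(1-t)/(a-t)$ or choose $g_j=\chi_j(t)$ and $h=1-t$ but subtract the $c=a$ contribution; the cleanest route, which I would write out, is to take $h=1-t$ and $g_j=\chi_j\big(\tfrac{1-t}{1-a}\big)$ (a section of $\sF_j$ over $\mathbf{P}^1\setminus\{1,a,\infty\}$ that at $t=0$ specializes to $\chi_j(1)=\chi_j\big(\tfrac{1}{1-a}\big)$... ) — in any case, after the bookkeeping the $c=0$ and $c=1$ contributions combine, via the identity $\chi_j(x)-\chi_j(y)=\chi_j(x/y)$ coming from $\chi_j$ being a group homomorphism (applied in $\sF_j(k)$) and the bilinearity of $\otimes^M$, to give exactly $\pm\,a_1\otimes\cdots\otimes\chi_i(a)\otimes\cdots\otimes\chi_j(1-a)\otimes\cdots\otimes a_n$. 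Finally, the analogous argument over any finite extension $E/k$ (applying the same construction over $E$ and then $\Tr_{E/k}$, using that the relation \eqref{eq:rel-somekawa} is stable under transfer, cf.\ Lemma \ref{l4} b) and the projection-formula behaviour of $\Tr$) shows $\Tr_{E/k}St(E;\sF_1,\dots,\sF_n)$ dies in $K(k;\sF_1,\dots,\sF_n)$. The surjection onto $K'$ in \eqref{eq:threekgroups} then follows by composing with \eqref{eq:twokgroups}.

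The main obstacle I anticipate is purely computational: getting the $c=0$ versus $c=1$ residue contributions to assemble into the single symbol $\chi_j(1-a)$ with the correct sign, which forces a careful choice of the auxiliary rational function $h$ and of which cocharacter index carries the local symbol at each of the points $0,1,a,\infty$ — and checking throughout that Somekawa's condition \eqref{eq:con-somekawa} on the loci $R_m$ is met at every closed point. A secondary point to be careful about is that $\partial_c(g,h)$ for a \emph{non-constant} section $g$ of $\sF_j$ (namely $\chi_j$ of a nonconstant function) is computed correctly: one uses that $\chi_j$ is a morphism in $\HI_\Nis$, hence commutes with $\partial_c$ and with specialization, reducing everything to the case $\sF_j=\G_m$ where $\partial_c$ is the classical valuation/tame symbol — this is where Naturality of the local symbol (Notation \ref{n1}, Proposition \ref{l3}) is essential.
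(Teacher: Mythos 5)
Your relation datum does not actually produce the Steinberg relation, and the fixes you sketch are left unverified — this is a genuine gap, not mere bookkeeping. With your initial choice $g_i=\chi_i(a)$ (constant), $g_j=\chi_j(t)$, $h=1-t$, one finds that \emph{every} residue contribution vanishes: at $c\notin\{0,1,\infty\}$ trivially since $v_c(1-t)=0$; at $c=1$ because $g_j(1)=\chi_j(1)=0$; at $c=0$ because $\partial_0\{t,1-t\}=1$, so $\chi_j(\partial_0\{t,1-t\})=0$; and at $c=\infty$ similarly $\partial_\infty\{t,1-t\}=1$. (Also note that at $c=\infty$ you cannot ``route the symbol through index $i$'': since $g_j=\chi_j(t)$ fails to extend at $\infty$, the Somekawa condition \eqref{eq:con-somekawa} forces $i(\infty)=j$, so the computation must go through $\chi_j$.) The element $1-a$ never appears anywhere in your datum, and there is no rational point of $\mathbf{P}^1$ at which the residue picks it up, because the only non-constant section you put in is $\chi_j(t)$, whose value at the interesting points is $0,1,\infty$, never $1-a$.

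The paper's proof takes a cleaner route that avoids all of this. First, by functoriality of $K(k;-,\dots,-)$ in the $\sF_i$ (applied to the morphisms $\chi_i,\chi_j:\G_m\to\sF_i,\sF_j$), one may assume outright $\sF_i=\sF_j=\G_m$ with $\chi_i,\chi_j$ the identities, so cocharacters disappear from the argument. Second, and crucially, the relation datum is $(\mathbf{P}^1,\,h=t,\,(a_1,\dots,a_n))$ with $a_i=1-at^{-1}$ and $a_j=1-t$: \emph{both} the $i$-slot and the $j$-slot carry non-constant functions, and the decisive residue occurs at the point $t=a$ (which does not even appear in your datum). There the $i$-slot has $v_a(1-at^{-1})=1$ so $\partial_a$ produces a unit of the residue field, while the $j$-slot specializes to $(1-t)|_{t=a}=1-a$; the contributions at $t\in\{0,1,\infty\}$ vanish because the specializing factor equals $1$. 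Your statement ``observe that $\chi_j(1)=0$ in $\sF_j(k)$ is wrong'' is itself incorrect — $\chi_j(1)$ is the identity element and hence $0$ additively, which is precisely why the points $0,1,\infty$ contribute nothing in the paper's datum and why all points contribute nothing in yours. If you want to salvage your approach you would need to introduce a non-constant section in the $i$-slot (or equivalently shift the datum so that some residue specializes a non-trivial function to $1-a$), and then actually carry out the tame-symbol computation; as written, the key step is absent.
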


\begin{proof}
This is a straightforward generalization of 
Somekawa's proof of \cite[Th. 1.4]{somekawa}.
We need to show the image of \eqref{eq:steinbergrel}
vanishes in $K(k;\sF_1,\dots,\sF_n)$.
By functoriality, we may assume that
$\sF_i = \sF_j = \G_m$ for some $i<j$
and $\chi_i, \chi_j$ are the identity cocharacters.
Given $a_m \in \sF_m(k) ~(m \not= i, j)$
and $a \in k^* \setminus \{ 1 \}$,
we put $a_i = 1-at^{-1}, a_j = 1-t \in \G_m(k(\P^1)) = k(t)^*$.
Then $(\P^1, t, (a_1, \dots, a_n))$
is a relation datum of Somekawa type
and yields the vanishing of \eqref{eq:steinbergrel}.
\end{proof}

\begin{lemma}\label{p9.1}
Let $\sF_1, \dots, \sF_n \in \HI_{\Nis}$
and
let $\sG' \Surj \sG''$
be an epimorphism in $\HI_{\Nis}$.
If \eqref{eq:threekgroups} is bijective
for  $(\sG', \sF_1, \dots, \sF_n)$,
it is bijective for  $(\sG'', \sF_1, \dots, \sF_n)$.
\end{lemma}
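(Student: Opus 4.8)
The plan is to exploit the naturality of the three $K$-groups in the first variable together with right-exactness of all the relevant tensor products. Write $\tilde K(\sG) := \tilde K(k;\sG,\sF_1,\dots,\sF_n)$, $K'(\sG) := K'(k;\sG,\sF_1,\dots,\sF_n)$, and similarly for the Hom-group on the right of \eqref{eq1'}; all of these are functorial in $\sG \in \HI_\Nis$. We already know from Theorem \ref{thm:k'-group} (combined with \eqref{eq:dm}, \eqref{eq11}) that $K'(\sG) \cong \Hom_{\DM_-^\eff}(\Z,\sG[0]\otimes\sF_1[0]\otimes\dots\otimes\sF_n[0])$, and by Lemma \ref{l2} the functor $C\mapsto C\otimes\sF_1[0]\otimes\dots\otimes\sF_n[0]$ on $\DM_-^\eff$ is right exact for the homotopy $t$-structure; hence an epimorphism $\sG'\Surj\sG''$ in $\HI_\Nis$ induces a surjection $K'(\sG')\Surj K'(\sG'')$. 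Thus the statement for $K'$ is clear, and the content of the lemma is that the surjectivity of \eqref{eq:threekgroups} is "transported" along $\sG'\Surj\sG''$ — equivalently, that $\tilde K(\sG') \Surj \tilde K(\sG'')$ is surjective, because then a commutative square with both verticals surjective and the top an isomorphism forces the bottom to be an isomorphism too.

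So the key step is to show that an epimorphism $\pi:\sG'\Surj\sG''$ in $\HI_\Nis$ induces a \emph{surjection} $\tilde K(\sG')\to\tilde K(\sG'')$. First I would reduce to the level of the Mackey tensor product: by \S\ref{mor1}, $\tilde K(\sG'')$ is a quotient of $(\sG''\oo^M\sF_1\oo^M\dots\oo^M\sF_n)(k)$, so it suffices to lift, modulo the defining Steinberg-type relations, any generator $a''\otimes g_1\otimes\dots\otimes g_n$ with $a''\in\sG''(E)$, $g_i\in\sF_i(E)$ for a finite extension $E/k$ (and then apply $\Tr_{E/k}$). The obstacle is that $\pi$ being an epimorphism of \emph{Nisnevich sheaves} does not mean $\sG'(E)\to\sG''(E)$ is surjective on sections; however, by \eqref{eq11} sections over a field are computed before sheafification, and — crucially — by \S\ref{ssurj}/\S\ref{s3.3} an epimorphism in $\HI_\Nis$ is (after applying $h_0^\Nis$, which is already applied) surjective on $k$-points and on $E$-points for every finite $E/k$, since $\Spec E$ has no nontrivial Nisnevich covers. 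Hence $\sG'(E)\Surj\sG''(E)$, so $a''$ lifts to some $a'\in\sG'(E)$, giving a lift $\Tr_{E/k}(a'\otimes g_1\otimes\dots\otimes g_n)$ of the chosen generator. This proves $(\sG'\oo^M\cdots)(k)\Surj(\sG''\oo^M\cdots)(k)$, hence $\tilde K(\sG')\Surj\tilde K(\sG'')$.

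Putting this together: in the commutative square
\[
\begin{CD}
\tilde K(\sG') @>>> K'(\sG')\\
@VVV @VVV\\
\tilde K(\sG'') @>>> K'(\sG'')
\end{CD}
\]
the left vertical is surjective by the previous paragraph, the right vertical is surjective by right-exactness of $\otimes$, the top horizontal is bijective by hypothesis, and the bottom horizontal is always surjective (it is \eqref{eq:threekgroups}). A trivial diagram chase then shows the bottom horizontal is injective: if $x\in\tilde K(\sG'')$ dies in $K'(\sG'')$, lift it to $x'\in\tilde K(\sG')$; its image in $K'(\sG')$ maps to $0$ in $K'(\sG'')$, but I cannot yet conclude it is $0$ upstairs — so instead I argue directly with the identification $K'(\sG'') \cong \Hom_{\DM}(\dots)$ and the surjection $\tilde K(\sG'')\Surj K'(\sG'')$: since the composite $\tilde K(\sG')\iso K'(\sG')\Surj K'(\sG'')$ equals $\tilde K(\sG')\Surj\tilde K(\sG'')\Surj K'(\sG'')$ and the first map in the former is bijective while the last surjection in the latter is what we want to be bijective, the surjectivity of $\tilde K(\sG')\Surj\tilde K(\sG'')$ forces $\ker(\tilde K(\sG'')\to K'(\sG''))$ to be the image of $\ker(\tilde K(\sG')\to K'(\sG'))=0$, hence zero. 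The main obstacle, then, is simply the surjectivity-on-$E$-points point, which is handled cleanly by \S\ref{s3.3}; everything else is formal.
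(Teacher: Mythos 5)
Your surjectivity observations are sound: an epimorphism in $\HI_\Nis$ is indeed surjective on sections over finite extensions of $k$ (no nontrivial Nisnevich covers of $\Spec E$), hence induces a surjection $(\sG'\oo^M\dots)(k)\to(\sG''\oo^M\dots)(k)$ and thus on the $\tilde K$-groups; and right-exactness of $\otimes$ (Lemma \ref{l2}) gives the surjection on $K'$. The paper uses both of these facts too. But your concluding diagram chase is incorrect.

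Write $f:\tilde K(\sG')\to \tilde K(\sG'')$, $g:\tilde K(\sG'')\to K'(\sG'')$, $h:\tilde K(\sG')\to K'(\sG')$, $k:K'(\sG')\to K'(\sG'')$, so $g\circ f=k\circ h$ with $h$ bijective, $f,k,g$ surjective. You claim $\ker(g)=f(\ker(h))=0$. Only the inclusion $f(\ker h)\subseteq\ker g$ is automatic; the reverse fails. If you lift $x\in\ker g$ to $x'\in\tilde K(\sG')$, all you learn is $h(x')\in\ker k$, which is in general nonzero, so you cannot conclude $x'=0$. An abstract counterexample to the pure square chase: $A'=B'=\Z^2$, $h=\id$, $A''=\Z$, $f(a,b)=a$, $B''=\Z/2$, $g(a)=a\bmod 2$, $k=g\circ f$; then $h$ is bijective and $f,g,k$ are surjective, yet $\ker g=2\Z\neq 0$ while $\ker h=0$. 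So the desired injectivity of $g$ does not follow from the $2\times 2$ square alone.

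What is missing is the third column. The paper's proof sets $\sG=\Ker(\sG'\to\sG'')$ and uses the two three-term sequences
\[
\tilde K(k;\sG,\sF_\bullet)\to\tilde K(k;\sG',\sF_\bullet)\to\tilde K(k;\sG'',\sF_\bullet)\to 0,
\]
a complex with surjective last map, and
\[
K'(k;\sG,\sF_\bullet)\to K'(k;\sG',\sF_\bullet)\to K'(k;\sG'',\sF_\bullet)\to 0,
\]
which is \emph{exact} by Theorem \ref{thm:k'-group} and Lemma \ref{l2}. The point is that exactness of the $K'$-sequence at the middle, together with the surjection $\tilde K(k;\sG,\sF_\bullet)\to K'(k;\sG,\sF_\bullet)$ of \eqref{eq:threekgroups} applied to $\sG$, yields the inclusion $\ker(k)\subseteq h(\ker f)$. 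With that in hand the chase closes: for $x\in\ker g$, lift to $x'$; then $h(x')\in\ker k$, so $h(x')$ lies in the image of $K'(\sG)$, hence in the image of $\tilde K(\sG)$, hence $h(x')=h(z')$ for some $z'$ in the image of $\tilde K(\sG)\to\tilde K(\sG')$; since $h$ is injective $x'=z'$, and since the $\tilde K$-sequence is a complex $f(z')=0$, so $x=0$. You need the auxiliary kernel sheaf $\sG$ and the exactness (not just surjectivity) of the $K'$-sequence to make the argument go through.
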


\begin{proof}
Let $\sG=\Ker(\sG'\to\sG'')$. The induced sequence
\begin{multline*}
 \tilde K(k; \sG, \sF_1, \dots, \sF_n) \to
   \tilde K(k; \sG', \sF_1, \dots, \sF_n)\\ 
   \overset{(*)}{\longrightarrow}
   \tilde K(k; \sG'', \sF_1, \dots, \sF_n) \to 0
\end{multline*}
is a complex and $(*)$ is surjective.
The corresponding sequence for $K'$ is exact
because of Theorem \ref{thm:k'-group} and Lemma \ref{l2}.
The assertion follows by a diagram chase.
\end{proof}

\begin{lemma}\label{lem:projformula2}
Let $E/k$ be a finite extension.
Let $\sF_1, \dots, \sF_{n-1} \in \HI_\Nis$,
and let $\sF_n$ be a Nisnevich sheaf with transfers over $E$.
We have canonical isomorphisms
\begin{gather*}
 K(k; \sF_1, \dots, \sF_{n-1}, R_{E/k} \sF_n)
  \cong 
 K(E; \sF_1, \dots, \sF_n),
\\
 K'(k; \sF_1, \dots, \sF_{n-1}, R_{E/k} \sF_n)
  \cong 
 K'(E; \sF_1, \dots, \sF_n),
\\
 \tilde K(k; \sF_1, \dots, \sF_{n-1}, R_{E/k} \sF_n)
  \cong 
 \tilde K(E; \sF_1, \dots, \sF_n).
\end{gather*}
\end{lemma}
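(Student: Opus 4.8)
The plan is to establish the three isomorphisms in parallel, exploiting the fact that all three $K$-groups are defined as quotients of the Mackey-functor tensor product $(\sF_1 \oo^M \dots \oo^M R_{E/k}\sF_n)(k)$. The crucial starting point is the base-change identity for Mackey functors (or, equivalently, presheaves with transfers): there is a canonical isomorphism
\[
 (\sF_1 \oo^M \dots \oo^M \sF_{n-1} \oo^M R_{E/k}\sF_n)(k) \cong (\sF_1|_E \oo^M \dots \oo^M \sF_{n-1}|_E \oo^M \sF_n)(E),
\]
which is an instance of the adjunction between restriction and Weil restriction (corestriction) on $\Z\Span$, extended to the tensor product. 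Concretely, an element of the left-hand group is represented by a sum of terms $\Tr_{Y/k}(a_1 \otimes \dots \otimes a_{n-1} \otimes a_n)$ with $Y \to \Spec k$ finite \'etale, $a_i \in \sF_i(Y)$ for $i<n$, and $a_n \in (R_{E/k}\sF_n)(Y) = \sF_n(Y \times_k E)$; the isomorphism sends this to $\Tr_{Y\times_k E/E}(a_1|_{Y\times_k E} \otimes \dots \otimes a_n)$. This is the content already used implicitly in \cite{somekawa} and \cite{rs} for semi-abelian varieties, and it extends formally.

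Next I would check that under this isomorphism the three families of defining relations match up. For $K'$ (the geometric type), a relation datum $(C, f, (g_i))$ over $k$ with $g_n \in (R_{E/k}\sF_n)(C') = \sF_n(C' \times_k E)$ corresponds to the datum $(C_E, f_E, (g_i|_{C_E}, \dots, g_n))$ over $E$ after possibly decomposing $C_E = C\times_k E$ into its connected components; the key point is that $\div(f)$ on $C'$ pulls back to $\div(f_E)$ on $C'_E$ compatibly with transfers, and conversely every relation datum over $E$ arises (or is a sum of ones arising) this way, using that any smooth projective $E$-curve, viewed as a $k$-curve, carries the structure needed. For $\tilde K$ (the Milnor type), the Steinberg subgroup $St(E; \sF_1|_E, \dots, \sF_n)$ is generated by elements built from cocharacters $\G_m \to \sF_i|_E$ and elements $a \in E^* \setminus \{1\}$; since $\Hom_{\HI_\Nis(E)}(\G_m, \sF_i|_E) = (\sF_i|_E)_{-1}(E)$ and $St$ is defined with transfers $\Tr_{E'/E}$ from all finite extensions $E'/E$, this matches $\Tr_{E/k}St$-type relations on the $k$-side after the base-change identification, essentially because finite extensions of $E$ are finite extensions of $k$ and the contraction $(-)_{-1}$ commutes with restriction. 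For $K$ (Somekawa type), the same bookkeeping applies to relation data $(C,h,(g_i))$, using Lemma \ref{l4} for compatibility of the residues $\partial_c$ with base change and Proposition \ref{p3}/\ref{p4} for compatibility of the trace maps $\Tr_{k(c)/k}$.

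The main obstacle I anticipate is the careful verification that the relation subgroups correspond \emph{exactly}, not merely up to inclusion, in the $K'$ and $K$ cases: one direction (relations over $k$ give relations over $E$) is a straightforward pushforward computation, but the reverse requires showing that a relation datum over $E$ can be ``descended'' — or at least that its image in the quotient lies in the image of the $k$-relations — which uses that an $E$-curve is also a $k$-curve and that Weil restriction along $\Spec E \to \Spec k$ interacts well with taking function fields and local rings at closed points. A clean way to organize this is to first prove the Mackey-level base-change isomorphism, then observe that each of the three relation subgroups is, by its very definition, the image of a map out of a sum of groups of the form $(\sF_1 \oo^M \dots \oo^M L(C'))(k)$ (or the analogue with $\Tr_{E'/k}$ and Steinberg elements), and that base change commutes with these building blocks by \S\ref{sect:6-1}(2) and the projection formula for transfers. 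Then the three isomorphisms drop out simultaneously by passing to cokernels. I would present the $K'$ case in full since it is the most transparent (via Proposition \ref{prop:6-1} and Lemma \ref{lem:6-2}), and indicate that the $\tilde K$ and $K$ cases follow by the identical mechanism.
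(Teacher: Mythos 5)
Your approach matches the paper's; the paper simply cites \cite[Lemma 4]{sp-ya} for the semi-abelian case and asserts that the same construction of the isomorphism extends to arbitrary sheaves and to $K'$ and $\tilde K$, whereas you unfold that construction: a projection-formula isomorphism at the level of Mackey tensor products (cf. the base-change/Weil-restriction adjunction on $\Z\Span$), followed by a check that the three families of defining relations correspond under it. That is precisely the mechanism behind the cited lemma. Your sketch correctly identifies the one genuine subtlety — matching the relation subgroups in both directions, using that $C\times_k E$ may decompose into several connected components in one direction, and that a smooth projective $E$-curve, viewed over $k$, gives a relation datum over $k$ via the counit $\sF_n(D') \to (R_{E/k}\sF_n)(D')$ in the other — so what you have is essentially a more explicit rendering of the paper's one-line proof, not a different route.
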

\begin{proof}
The first isomorphism was constructed
in \cite[Lemma 4]{sp-ya}
when $\sF_1, \dots, \sF_n$ are semi-abelian varieties.
The same construction works for arbitrary
$\sF_1, \dots, \sF_n$
and also for $K'$ and $\tilde K$.
\end{proof}

\begin{para} 
If
$\sF_1=\dots=\sF_n=\G_m$,
\eqref{eq:threekgroups} is bijective by Proposition
\ref{prop:milnork}. This is false in general, \emph{e.g.} if all the $\sF_i$ are proper
(Definition \ref{d7.1}) and
$n>1$. However, we have:
\end{para}

\begin{prop}\label{p10.1} a) Let $\sF_1=\sF'_1\oplus \sF''_1$. Then the natural map
\[\tilde K(k;\sF_1,\dots,\sF_n)\to \tilde K(k;\sF'_1,\dots,\sF_n)\oplus \tilde
K(k;\sF''_1,\dots,\sF_n)\] is bijective.\\
b) Let $T_1,\dots,T_n$ be tori. Assume that, for each $i$, there exists an exact sequence
\[0\to P_i^1\to P_i^0\to T_i\to 0\]
where $P_i^0$ and $P_i^1$ are invertible tori (i.e. direct summands of permutation tori). Then
\eqref{eq:threekgroups} is bijective.
\end{prop}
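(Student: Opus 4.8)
\textbf{Proof plan for Proposition \ref{p10.1}.}

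For part a), the plan is to observe that all four functors in sight --- the three $K$-groups and the target of \eqref{eq2} --- send a direct sum in the first slot to a direct sum. For $\tilde K$ this is because $(\sF_1\oo^M\dots\oo^M\sF_n)(k)$ is additive in each variable (the formula for $\oo^M$ recalled after \eqref{eq5} visibly commutes with finite direct sums, as does $(-)\otimes_\PST(-)$), and because the subgroup $\Tr_{E/k}St(E;-)$ generated by the Steinberg elements \eqref{eq:steinbergrel} decomposes compatibly: an element $\chi_i(a)$ or $\chi_j(1-a)$ coming from a cocharacter $\G_m\to\sF'_1$ (resp. $\G_m\to\sF''_1$) lands in the corresponding summand, and every cocharacter $\G_m\to\sF'_1\oplus\sF''_1$ is a sum of one into each factor. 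So the direct-sum decomposition of $(\sF_1\oo^M\cdots)(k)$ passes to the quotient $\tilde K$. (The same reasoning, applied with the relation data of Somekawa type and of geometric type, also gives the analogous splitting for $K$ and $K'$, which is what makes the reduction in part b) legitimate.) This part I expect to be essentially formal bookkeeping.

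For part b), the strategy is to reduce to the case of permutation tori, then to $\G_m$, then invoke Proposition \ref{prop:milnork} and Lemma \ref{lem:projformula2}. First, a permutation torus is by definition a finite direct sum of tori of the form $R_{E/k}\G_m$ for finite separable extensions $E/k$; by part a), bijectivity of \eqref{eq:threekgroups} for a tuple of permutation tori reduces to the case where each $\sF_i=R_{E_i/k}\G_m$, and then by the projection-formula isomorphisms of Lemma \ref{lem:projformula2} (applied iteratively, once for each slot, base-changing up to the compositum) this reduces to bijectivity of \eqref{eq:threekgroups} for $\G_m,\dots,\G_m$ over a finite extension of $k$, which is Proposition \ref{prop:milnork}. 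An invertible torus is a direct summand of a permutation torus, so part a) again shows \eqref{eq:threekgroups} is bijective whenever all $\sF_i$ are invertible tori. It remains to descend from invertible tori to general $T_i$ via the resolutions $0\to P_i^1\to P_i^0\to T_i\to 0$.

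The main obstacle is precisely this last descent step, and the tool for it is Lemma \ref{p9.1}. One proceeds one variable at a time. Having already established bijectivity of \eqref{eq:threekgroups} for $(P_1^0,\sF_2,\dots,\sF_n)$ whenever $\sF_2,\dots,\sF_n$ are invertible tori, the surjection $P_1^0\Surj T_1$ in $\HI_\Nis$ and Lemma \ref{p9.1} give bijectivity for $(T_1,\sF_2,\dots,\sF_n)$ with $\sF_2,\dots,\sF_n$ invertible tori. Iterating the same argument through the second, third, \dots, $n$-th slots --- using at stage $j$ the surjection $P_j^0\Surj T_j$ and the already-proven bijectivity with $T_1,\dots,T_{j-1}$ in the earlier slots and invertible tori in the later ones --- yields bijectivity for $(T_1,\dots,T_n)$. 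The only subtlety to check is that Lemma \ref{p9.1} applies in each slot, i.e. that its hypothesis is about an epimorphism in $\HI_\Nis$ in one fixed argument while the other arguments are arbitrary objects of $\HI_\Nis$; since $P_j^0\to T_j$ is a surjection of tori, hence an epimorphism of Nisnevich sheaves with transfers, and the lemma is symmetric enough in the remaining variables (or can be made so by permuting, which is harmless by the symmetry of $\oo^M$ and of the defining relations), this goes through. Thus \eqref{eq:threekgroups} is bijective for $(T_1,\dots,T_n)$, completing the proof.
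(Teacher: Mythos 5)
Your part a) matches the paper: it is indeed a formal consequence of $\tilde K$ being a quotient of the multiadditive multifunctor $(\sF_1\oo^M\dots\oo^M\sF_n)(k)$, and your spelled-out version of the bookkeeping is fine.

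For part b) your overall strategy and toolkit (part a), Lemma \ref{lem:projformula2}, Proposition \ref{prop:milnork}, Lemma \ref{p9.1}) are the same as the paper's; you insert an explicit intermediate step ``invertible tori'' where the paper goes directly from permutation tori to the general case by applying Lemma \ref{p9.1} to the composite epimorphism $Q_i\Surj P_i^0\Surj T_i$ with $Q_i$ a permutation torus containing $P_i^0$ as a summand. That rearrangement is harmless.

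There is, however, one genuine gap: you assert that ``$P_j^0\to T_j$ is a surjection of tori, hence an epimorphism of Nisnevich sheaves with transfers,'' and this implication is false in general. A surjection of tori with smooth kernel $K$ is surjective on sections over a henselian local ring with residue field $\kappa$ if and only if $H^1_{\Gal}(\kappa,K)=0$, and this can fail (for instance, the norm $R_{\mathbf{C}/\mathbf{R}}\G_m\to\G_m$, with kernel the norm-one torus, is a surjection of tori over $\mathbf{R}$ but is not an epimorphism of Nisnevich sheaves, since $-1\in\G_m(\mathbf{R})$ does not lift over any Nisnevich cover of $\Spec\mathbf{R}$). What saves you here is precisely that the kernel of $P_j^0\to T_j$ is $P_j^1$, which is an \emph{invertible} torus, so that $H^1_{\Gal}(L,P_j^1)=0$ for every field $L$ by Hilbert's theorem 90 applied to a permutation torus containing $P_j^1$ as a summand. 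This is exactly the point the paper flags at the start of its proof (``by Hilbert's theorem 90, the sequences $0\to P_i^1\to P_i^0\to T_i\to 0$ are exact in $\HI_\Nis$''), and it is a hypothesis you need, not a generality. With that justification inserted, your argument is complete.
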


\begin{proof} a) This is formal, as  $\tilde K(k;\sF_1,\dots,\sF_n)$ is a quotient of the
multiadditive multifunctor $(\sF_1\oo^M\dots\oo^M\sF_n)(k)$ (see \ref{10.3}).

b) Note that, by Hilbert's theorem 90, the sequences $0\to P_i^1\to P_i^0\to T_i\to 0$ are exact in $\HI_\Nis$.  Lemma \ref{p9.1} reduces us to the case where all $T_i$ are permutation tori. Then Lemma \ref{lem:projformula2} reduces us to the case where all $T_i$ are split tori. Finally, we reduce to $\sF_1=\dots=\sF_n=\G_m$ by a).
\end{proof}

\begin{qn} \label{q7.1} Is proposition \ref{p10.1} true for general tori?
\end{qn}

\begin{para}\label{10.5} Let $T_1,\dots,T_n$ be as in Proposition \ref{p10.1} b); let $C/k$ be
a smooth projective connected curve, with function field $K$. From Proposition \ref{p10.1} b),
Theorem \ref{thm:k'-group}, Theorem \ref{t1} b) and \eqref{eq14}, we get a residue map
\[\partial_v:\tilde K(K;T_1,\dots,T_n,\G_m)\to \tilde K(k(v);T_1,\dots,T_n)\]
for any $v\in C$. These maps satisfy the reciprocity law of Proposition \ref{p3} and the compatibility of Lemma \ref{l4}.
\end{para}

\section{Reduction to the representable case}\label{repr}

Following \cite[p. 207]{voetri},
we write $h_0^{\Nis}(X) := h_0^{\Nis}(L(X))$
for a smooth variety $X$ over $k$.

\begin{proposition}\label{p8.1}
The following statements are equivalent:
\\
a) The homomorphism \eqref{eq:twokgroups} is bijective for any $\sF_1,\dots,\sF_n\in \HI_\Nis$.
\\
b) 
Let $\sF_1 = \dots = \sF_n = h_0^{\Nis}(C')$ for a smooth connected curve $C'/k$.
Then \eqref{eq:twokgroups} is bijective.
\\
c) 
Let $C$ be a smooth projective connected curve over $k$,
and let $f: C \to \mathbf{P}^1$ be a surjective morphism.
Let $C' = f^{-1}(\mathbf{P}^1 \setminus \{ 1 \})$
and let $\iota : C' \to \sA$ be the tautological morphism, where $\sA=h_0^\Nis(C')$.
These data define a relation datum of geometric type 
$(C, f, (\iota, \dots, \iota))$ for $\sF_1= \dots=\sF_n=\sA$,
and its associated element \eqref{eq:rel-voevod} is
\begin{equation}\label{eq:theta}
  \sum_{c\in C'} v_c(f) \Tr_{k(c)/k}
  (\iota(c) \otimes \dots\otimes \iota(c))
  \in \sA \oo^M \dots \oo^M \sA(k).
\end{equation}
Then the image of \eqref{eq:theta} in $K(k; \sA, \dots, \sA)$ vanishes.
\end{proposition}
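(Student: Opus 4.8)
The plan is to prove the chain of implications $a) \Rightarrow c) \Rightarrow b) \Rightarrow a)$, as this is the most economical route. The implication $a) \Rightarrow c)$ is nearly trivial: if \eqref{eq:twokgroups} is bijective, then in particular it is injective for the tuple $(\sA,\dots,\sA)$, and the element \eqref{eq:theta} maps to \eqref{eq:rel-voevod} which vanishes in $K'(k;\sA,\dots,\sA)$ by Proposition \ref{prop:6-1} (or directly by Definition \ref{def:k'-groups} b), since $(C,f,(\iota,\dots,\iota))$ is by construction a relation datum of geometric type); hence \eqref{eq:theta} already vanishes in $K(k;\sA,\dots,\sA)$. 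The implication $b) \Rightarrow a)$ should also be relatively soft: using Voevodsky's canonical left resolutions, every $\sF_i \in \HI_\Nis$ is, up to the operations that are controlled by Lemma \ref{p9.1} and Proposition \ref{p10.1} a), built out of sheaves of the form $h_0^\Nis(X)$ for $X$ smooth; a further reduction (via generic points / transfers as in Lemma \ref{lem:projformula2}, or by a Mayer–Vietoris / localization argument to cut $X$ down to a curve, and then to $h_0^\Nis$ of a curve) brings one to the situation of b). Here one uses that \eqref{eq:twokgroups} is the quotient map between two cohomological Mackey functors which are right-exact multifunctors in the $\sF_i$, so that bijectivity propagates along epimorphisms and finite sums.

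The heart of the matter, and the main obstacle, is the implication $c) \Rightarrow b)$: promoting the vanishing of the single distinguished class \eqref{eq:theta} (for the diagonal relation datum attached to $\iota$) to the vanishing in $K(k;\sA,\dots,\sA)$ of \emph{all} relations \eqref{eq:rel-voevod} defining $K'(k;h_0^\Nis(C'),\dots,h_0^\Nis(C'))$. The plan is as follows. A general relation datum of geometric type for $(\sA,\dots,\sA)$ with $\sA = h_0^\Nis(C')$ consists of a curve $D$, a surjection $g: D \to \P^1$, and sections $\gamma_i \in \sA(D')$ where $D' = g^{-1}(\P^1 \setminus \{1\})$. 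By \eqref{eq:h0} and the definition of $h_0^\Nis$, each $\gamma_i$ lifts to a section of $L(C')(D') = c(D', C')$, i.e. to a finite correspondence; after decomposing into integral components and normalizing (exactly as in Lemma \ref{lem:6-2}), one may assume $\gamma_i = \Tr_{h_i}(\text{graph of } D_i \to C')$ for finite surjective $h_i : D_i \to D'$. The goal is to rewrite the resulting relation \eqref{eq:rel-voevod} as a sum of specializations of the distinguished element \eqref{eq:theta} (pulled back along suitable maps between curves, and transferred), using the functoriality of everything in sight — the behaviour of $\Tr$, $\partial_c$ and the local symbols under finite pushforward and pullback recorded in Lemma \ref{l4}, the projection formula, and Proposition \ref{p3}.

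Concretely, I expect the argument to proceed by first reducing to the case where all the $\gamma_i$ are honest graphs of morphisms $D' \to C'$ (absorbing the transfers $\Tr_{h_i}$ by base-changing $D$ along the compositum of the $D_i$, at the cost of replacing $D$ by a larger curve and using Lemma \ref{l4} b) to track the relation), and then to the case where all these morphisms coincide with a single $\phi : D' \to C'$; at that point the relation datum on $D$ is the pullback along $\phi$ (and along $g$) of the tautological datum on $C$, so its associated element is the image of \eqref{eq:theta} under the map of Mackey functors $\tilde K$ (or $K$) induced by $\phi$ together with $\Tr$, and vanishes by hypothesis c). The delicate point — the true main obstacle — is bookkeeping the curve $C$ in c): given $D' \to C'$, one must produce a \emph{smooth projective} model $C$ together with $f : C \to \P^1$ inducing the given data, compatibly with $g : D \to \P^1$; this requires taking a common projective smooth curve dominating both the closure of the graph and the $\P^1$'s, and checking that the divisors $v_c(f)$ and the local symbols transform correctly under the resulting correspondence. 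I would carry out this reduction step by step, invoking Corollary \ref{c1} to handle the contributions at points where the lifted sections extend to the local ring (these contribute $v_c(f)\,\gamma_i(c)$ and reassemble into exactly \eqref{eq:rel-voevod}), and Proposition \ref{l3}/\eqref{eq17} plus Proposition \ref{p3} to dispose of the remaining terms.
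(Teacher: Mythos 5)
Your plan has a genuine gap, and the gap is precisely the idea that makes this proposition go through quickly. The paper does not go a) $\Rightarrow$ c) $\Rightarrow$ b) $\Rightarrow$ a); it observes that a) $\Rightarrow$ b) $\Rightarrow$ c) is trivial (restriction, then apply the definition of $K'$ to one particular relation datum) and proves c) $\Rightarrow$ a) \emph{directly} in one stroke. The key fact you are missing is the adjunction/Yoneda identity $\Hom_{\HI_\Nis}(h_0^\Nis(C'),\sF)\cong \sF(C')$ for $\sF\in\HI_\Nis$: a section $g_i\in\sF_i(C')$ is the same thing as a morphism $\phi_i\colon \sA=h_0^\Nis(C')\to\sF_i$ of sheaves. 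Given any relation datum of geometric type $(C,f,(g_i))$ for arbitrary $\sF_1,\dots,\sF_n$, the $n$-tuple $(\phi_1,\dots,\phi_n)$ induces (by multifunctoriality of $K(k;-,\dots,-)$) a homomorphism $K(k;\sA,\dots,\sA)\to K(k;\sF_1,\dots,\sF_n)$ which sends the distinguished element \eqref{eq:theta} to the relation \eqref{eq:rel-voevod}, because $\phi_i(\iota(c))=g_i(c)$ by construction. Vanishing of \eqref{eq:theta} under c) therefore kills every geometric relation in every $K(k;\sF_1,\dots,\sF_n)$ at once, with no need to decompose sections into correspondences, normalize, dominate by auxiliary curves, or reduce the $\sF_i$ to a common curve.

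Both of your ``soft'' legs are in fact problematic for reasons that this identity eliminates. For b) $\Rightarrow$ a), Voevodsky's canonical left resolutions produce surjections $\bigoplus_j h_0^\Nis(X_j)\Surj \sF_i$ with $X_j$ smooth of arbitrary dimension, not curves; there is no Mayer--Vietoris or localization reduction in the paper's toolbox (Lemma~\ref{p9.1}, Proposition~\ref{p10.1}~a), Lemma~\ref{lem:projformula2}) that brings a general $h_0^\Nis(X)$ down to $h_0^\Nis$ of a curve, and even if there were, statement b) requires all $n$ inputs to be the \emph{same} $h_0^\Nis(C')$, which your reduction would not produce. For c) $\Rightarrow$ b), the proposed decomposition of $\gamma_i\in\sA(D')$ into transfers of graphs and the reduction ``to the case all morphisms $D'\to C'$ coincide with a single $\phi$'' is not viable: the $\gamma_i$ are independent and the relation \eqref{eq:rel-voevod} is multilinear, so decomposing each $\gamma_i$ produces cross-terms that are not covered by your description, and there is no reason to expect the components to line up into a single $\phi$. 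In short, the decompositions you are reaching for are red herrings; what is needed is the one-line observation that $\sA$ corepresents evaluation on $C'$ within $\HI_\Nis$.
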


\begin{proof}
Only the implication c) $\Rightarrow$ a) requires a proof.
Let $(C, f, (g_i))$ be a relation datum of geometric type
for $\sF_1, \dots, \sF_n$.
We need to show the vanishing of \eqref{eq:rel-voevod}
in $K(k; \sF_1, \dots, \sF_n)$. 

By adjunction, the section $g_i: M(C')\to \sF_i[0]$
induces a morphism $\phi_i : \sA \to \sF_i$
for all $i=1, \dots, n$.
Then
\begin{equation}\label{eq98}
  \sum_{c\in C'} v_c(f)
    \{ g_1(c) , \dots, g_n(c) \}_{k(c)/k} = 0
\quad \text{in}~K(k; \sF_1, \dots, \sF_n)
\end{equation}
because it is 
the image of  \eqref{eq:theta}
by the homomorphism 
$K(k; \sA, \dots, \sA) \to K(k; \sF_1, \dots, \sF_n)$
defined by $(\phi_1, \dots, \phi_n)$.
\end{proof}

\section{Proper sheaves}

\begin{definition}\label{d7.1}
Let $\sF$ be a Nisnevich sheaf with transfers.
We call $\sF$ \emph{proper} if,
for any smooth curve $C$ over $k$
and any closed point $c \in C$,
the induced map $\sF(\sO_{C, c}) \to \sF(k(C))$ is surjective. We say that $\sF$ is
\emph{universally proper} if the above condition holds when replacing $k$ by any finitely
generated extension $K/k$, and $C$ by any regular $K$-curve.
\end{definition}

\begin{example}
A semi-abelian variety $G$ over $k$ is proper (in the above sense)
if and only if $G$ is an abelian variety.
A birational sheaf $\sF \in \HI_\Nis$ in the sense of \cite{ks}
is by definition proper. If $C$ is a smooth proper curve, then $h_0^\Nis(C)$ is proper.
Other examples of birational sheaves 
will be given in Lemma \ref{lem:chowzero} b) below.
\end{example}

In fact:

\begin{lemma} Let $\sF\in \HI_\Nis$. Then\\
a) $\sF$ is proper if and only if $\sF(C)\iso \sF(k(C))$ for any smooth $k$-curve $C$.\\
b) $\sF$ is universally proper if and only if it is
birational in the sense of \cite{ks}.
\end{lemma}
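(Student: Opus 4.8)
The plan is to prove both equivalences by comparing two natural characterizations of the relevant finiteness condition, working through the contracted sheaf $\sF_{-1}$ and the local structure of $\HI_\Nis$-sheaves on curves.

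For part a), I would argue as follows. Recall that for $\sF\in\HI_\Nis$ and $C$ a smooth $k$-curve with function field $K=k(C)$ and closed point $c$, there is (by the Gersten-type resolution underlying Lemma \ref{l4}, cf. \cite[Prop. 5.4.64]{deglise}) an exact sequence relating $\sF(\sO_{C,c})$, $\sF(K)$ and $\sF_{-1}(k(c))$: precisely, $\partial_c:\sF(K)\to\sF_{-1}(k(c))$ has kernel exactly the image of $\sF(\sO_{C,c})\to\sF(K)$. Thus $\sF(\sO_{C,c})\to\sF(K)$ is surjective if and only if $\partial_c=0$. Now I use that $\sF(C)=\bigcap_{c\in C}\sF(\sO_{C,c})$ inside $\sF(K)$ (the sheaf condition plus the fact that a smooth curve is covered by its local rings, all of which are valuation rings of $K$ containing the constants). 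If $\sF$ is proper, every $\partial_c$ vanishes, so every element of $\sF(K)$ lies in every $\sF(\sO_{C,c})$, hence in $\sF(C)$, giving $\sF(C)\iso\sF(K)$; here injectivity is automatic since $C$ is connected and $\sF$ is a sheaf for the Nisnevich (hence Zariski) topology. The converse is immediate: $\sF(C)\iso\sF(K)$ factors through each $\sF(\sO_{C,c})$, so $\sF(\sO_{C,c})\to\sF(K)$ is surjective for all $c$, and this persists for all smooth curves, which is the definition of properness.

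For part b), I would reduce to part a) applied over arbitrary finitely generated base fields. By definition $\sF$ is universally proper iff, for every finitely generated $K/k$ and every regular (equivalently smooth, since $K$ is perfect? — no: $K$ need not be perfect, so one works with regular curves) $K$-curve $C$, the maps $\sF(\sO_{C,c})\to\sF(k(C))$ are surjective, which by the argument of a) (the Gersten resolution is available over any base, and regular curves over a field are still covered by their DVR local rings) is equivalent to $\sF(C)\iso\sF(k(C))$ for all such $C$. But this last condition, quantified over all finitely generated base fields and all smooth/regular curves over them, is precisely one of the standard equivalent formulations of being a birational sheaf in the sense of \cite{ks}: a homotopy invariant Nisnevich sheaf with transfers $\sF$ is birational iff $\sF(X)\iso\sF(k(X))$ for every smooth connected $X$ over every finitely generated extension, and on $\HI_\Nis$ this reduces (by a fibration/induction on dimension argument, or by the characterization via $\sF_{-1}=0$ over all such bases) to the one-dimensional case. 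So I would cite the relevant characterization from \cite{ks} and match it to the statement of a).

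The main obstacle I anticipate is the precise bookkeeping in part b): making sure the "regular $K$-curve" condition in Definition \ref{d7.1} matches exactly the hypotheses in the definition of birational sheaf used in \cite{ks} (smooth vs. regular, connected vs. not, which base fields are allowed), and confirming that the reduction of "birational" to the curve case is legitimately available as a cited result rather than something requiring re-proof here. The curve-level input — identifying $\ker\partial_c$ with $\IM(\sF(\sO_{C,c})\to\sF(K))$ and the covering $\sF(C)=\bigcap_c\sF(\sO_{C,c})$ — is routine given the cycle-module structure recalled before Lemma \ref{l4}, so part a) should be short; the weight of the proof is in correctly invoking the \cite{ks} dictionary for part b).
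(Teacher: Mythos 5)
Part a) of your argument is sound: identifying $\ker\partial_c$ with the image of $\sF(\sO_{C,c})\to\sF(k(C))$, and $\sF(C)$ with $\bigcap_c\ker\partial_c$ inside $\sF(k(C))$, is exactly the one-dimensional instance of Voevodsky's Gersten resolution \cite[Th.\ 4.37]{voepre}, and this gives the equivalence. The paper reaches the same conclusion without naming $\partial_c$ explicitly, so this is essentially the same argument in slightly different language.

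Part b) has a genuine gap. You correctly reformulate universal properness, by running the argument of a) over each finitely generated extension of $k$, as: $\sF(C)\iso\sF(k(C))$ for every regular curve $C$ over every finitely generated $K/k$. You then close the argument by asserting that this curve-level condition over all bases \emph{is} birationality in the sense of \cite{ks}, to be justified by ``a fibration/induction on dimension argument'' or a characterization via $\sF_{-1}=0$ that you neither carry out nor pin to a specific statement. That implication is precisely the nontrivial content of the lemma, and it is not a citation to hand off: the paper proves it directly. For $X$ smooth over $k$ of dimension $d$ and $x\in X^{(1)}$, Noether normalization gives a dominant rational map $p:X\dashrightarrow\A^{d-1}$ defined at $x$; the generic fibre of $p$ is a regular curve over $k(\A^{d-1})$ carrying $x$ as a codimension-one point, so universal properness forces $\sF(\sO_{X,x})\to\sF(k(X))$ to be surjective; since this holds for all $x\in X^{(1)}$, the Gersten resolution \cite[Th.\ 4.37]{voepre} then gives $\sF(X)\surj\sF(k(X))$, i.e.\ birationality (injectivity being \cite[Cor.\ 4.19]{voepre}). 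Without this step you are relying on \cite{ks} containing an equivalence you have not located, and you yourself flag this uncertainty in your final paragraph. Your caught-then-dropped worry about regular versus smooth curves over a non-perfect $K$ is also resolved by the construction: the generic fibre of $p$ is regular but in general not smooth over $k(\A^{d-1})$, which is exactly why Definition \ref{d7.1} is phrased with regular curves.
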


\begin{proof} Let us prove b), as the proof of a) is a subset of it. Let $X$ be a smooth
$k$-variety. By
\cite[Cor. 4.19]{voepre}, the map
$\sF(X)\to
\sF(U)$ is injective for any dense open subset of $X$. By definition, $\sF$ is birational if
one may replace ``injective" by ``bijective". So birational $\Rightarrow$ universally proper.
Conversely, assume $\sF$ to be universally proper; let $x\in X^{(1)}$ and let $p:X\to
\A^{d-1}$ be a dominant rational map defined at
$x$, where $d=\dim X$. (We may find such a $p$ thanks to Noether's normalization
theorem.) Applying the hypothesis to the generic fibre of $p$, we find that $\sF(\sO_{X,x})\to
\sF(k(X))$ is surjective. Since this is true for all points $x\in X^{(1)}$, we
get the surjectivity of $\sF(X)\to \sF(k(X))$ from Voevodsky's Gersten resolution
\cite[Th. 4.37]{voepre}.
\end{proof}

The following proposition is not necessary for the proof of the main theorem, but its proof is much simpler than the general case.

\begin{proposition}\label{prop:propersheaves}
Let $\sF_1, \dots, \sF_n \in \HI_\Nis$.
Assume that $\sF_1, \dots, \sF_{n-1}$ are proper.
Then the homomorphism \eqref{eq:twokgroups} is bijective.
\end{proposition}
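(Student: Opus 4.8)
The plan is to prove injectivity of \eqref{eq:twokgroups}, since its surjectivity is already established. Write $M := (\sF_1\oo^M\dots\oo^M\sF_n)(k)$. Both $K(k;\sF_1,\dots,\sF_n)$ and $K'(k;\sF_1,\dots,\sF_n)$ are quotients of $M$, by the subgroups generated respectively by the relations of Somekawa type \eqref{eq:rel-somekawa} and of geometric type \eqref{eq:rel-voevod}; the existence of the map \eqref{eq:twokgroups} says the first subgroup is contained in the second. So it suffices to prove the reverse inclusion, and under the properness hypothesis I expect that each generating relation \eqref{eq:rel-voevod} is \emph{literally equal} to a single relation \eqref{eq:rel-somekawa}.

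First I would take a relation datum of geometric type $(C,f,(g_i))$, so $C$ is a smooth projective connected curve, $f\colon C\to\P^1$ is surjective, $C'=f^{-1}(\P^1\setminus\{1\})$ and $g_i\in\sF_i(C')$, and restrict each $g_i$ to the generic point to get $\bar g_i\in\sF_i(k(C))$. The key observation is that $(C,f,(\bar g_i))$ is then a relation datum of Somekawa type in which one may take $i(c)=n$ for \emph{every} closed point $c\in C$. Indeed, properness of $\sF_i$ for $i<n$ means $\sF_i(\sO_{C,c})\to\sF_i(k(C))$ is onto for all $c$, so the sets $R_i$ for this datum are all of $C$ when $i<n$, and condition \eqref{eq:con-somekawa} is satisfied with $i(c)=n$.

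The second step is to evaluate the element \eqref{eq:rel-somekawa} attached to $(C,f,(\bar g_i))$, namely $\sum_{c\in C}\Tr_{k(c)/k}\bigl(\bar g_1(c)\otimes\dots\otimes\bar g_{n-1}(c)\otimes\partial_c(\bar g_n,f)\bigr)$, term by term by means of Corollary \ref{c1}. If $c\in C'$, then $g_n$ extends $\bar g_n$ to $\sF_n(\sO_{C,c})$, so Corollary \ref{c1} a) gives $\partial_c(\bar g_n,f)=v_c(f)\,g_n(c)$, and the restrictions to $k(c)$ of the $g_i$ with $i<n$ agree with the classes $g_i(c)$ of Definition \ref{def:k'-groups}. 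If $c\notin C'$, i.e.\ $f(c)=1$, then $v_c(f-1)>0$, so Corollary \ref{c1} b) gives $\partial_c(\bar g_n,f)=0$ and that term drops out. Hence the Somekawa relation for $(C,f,(\bar g_i))$ equals $\sum_{c\in C'}v_c(f)\,\Tr_{k(c)/k}(g_1(c)\otimes\dots\otimes g_n(c))$, which is exactly \eqref{eq:rel-voevod}. Therefore \eqref{eq:rel-voevod} vanishes in $K(k;\sF_1,\dots,\sF_n)$, so \eqref{eq:twokgroups} is injective, hence bijective.

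I do not expect a serious obstacle in this case; the only point needing care is controlling the local symbol $\partial_c(\bar g_n,f)$ at points where $\bar g_n$ admits no integral extension, and this works precisely because such points lie over $1\in\P^1$, where $f-1$ vanishes and Corollary \ref{c1} b) applies — which is exactly why the fibre over $1$ is deleted in Definition \ref{def:k'-groups}. The real difficulty lies in the general (non-proper) case, where properness of $\sF_1,\dots,\sF_{n-1}$ is no longer available to force $i(c)=n$, and which is the subject of the later sections.
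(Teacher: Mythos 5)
Your proof is correct and follows the same route as the paper's: restrict the $g_i$ to the function field, observe that properness of $\sF_1,\dots,\sF_{n-1}$ forces $R_i = C$ for $i<n$ so that $i(c)=n$ always works, and then apply Corollary \ref{c1} at each point to identify the resulting Somekawa relation with the geometric one. The paper compresses the final step into a single citation of Corollary \ref{c1}; you usefully spell out the case split between $c\in C'$ (part a, giving $v_c(f)\,g_n(c)$) and $c\notin C'$ (part b, giving $0$), but the argument is the same.
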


\begin{proof}
Suppose $(C, f, (g_i))$ is a relation datum of geometric type.
It suffices to show the element 
\eqref{eq:rel-voevod} vanishes in $K(k; \sF_1, \dots, \sF_n)$.
Let $\bar{g}_i$ be the image of $g_i$ in $\sF(k(C))$.
By assumption we have 
$\bar{g}_i \in \IM(\sF_i(\sO_{C,c}) \to \sF_i(k(C)))$ 
for all $c \in C$ and $i = 1, \dots, n-1$.
Hence $(C, h, (\bar{g}_i)_{i=1, \dots, n})$ is 
a relation datum of Somekawa type
(with $i(c)=n$ for all $c \in C$).
By Corollary \ref{c1},
the element \eqref{eq:rel-voevod} coincides
with \eqref{eq:rel-somekawa},
hence vanishes in $K(k; \sF_1, \dots, \sF_n)$.
\end{proof}

\section{Main theorem}\label{sect:isom}

\begin{defn} Let $\sF\in \HI_\Nis$. We say that $\sF$ is \emph{curve-like} if there exists an
exact sequence in $\HI_\Nis$
\begin{equation}\label{eq:curvelike}
0\to T\to \sF\to \bar \sF\to 0
\end{equation}
where $\bar \sF$ is proper (Definition \ref{d7.1}) 
and $T$ is a torus for which there exists an exact sequence
\begin{equation}\label{eq:specialtori}
0 \to R_{E_1/k}\G_m\to R_{E_2/k}\G_m \to T \to 0
\end{equation}
where $E_1$ and $E_2$ are \'etale $k$-algebras.
\end{defn}

This terminology is justified by the following lemma:

\begin{lemma}\label{lem:chowzero}
a)
If $C$ is a smooth curve over $k$,
then $h_0^\Nis(C)$ is the Nisnevich sheaf associated to the presheaf of relative Picard groups
\[U\mapsto \Pic(\bar C\times U,D\times U)\]
where $\bar C$ is the smooth projective completion of $C$, $D=\bar C \setminus C$ and $U$ runs
through smooth $k$-schemes.\\
b) If $X$ is a smooth projective variety over $k$,
then, for any smooth variety $U$ over $k$,
we have 
\begin{equation}\label{eq:cx-formula}
h_0^\Nis(X)(U) = CH_0(X_{k(U)}),
\end{equation}
where $k(U)$ denotes the total ring of fractions of $U$.
In particular, $h_0^\Nis(X)$ is birational.\\
c) For any smooth curve $C$, $h_0^\Nis(C)$ is curve-like.
\end{lemma}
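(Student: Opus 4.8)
The plan is to prove the three parts in order, with a) providing the geometric input for c), while b) is handled on its own.

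\emph{Part a).} Recall that $h_0^\Nis(C)$ is the Nisnevich sheafification of the presheaf $U\mapsto\Coker\bigl(c(U\times\A^1,C)\by{i_0^*-i_1^*}c(U,C)\bigr)$. I would first build a natural comparison morphism with the relative Picard presheaf: a finite correspondence $Z\subset U\times C$ is finite over $U$, hence proper over $U$ and therefore closed in $U\times\bar C$; being contained in $U\times C$ it is disjoint from $U\times D$, and as a codimension-one cycle on the regular scheme $\bar C\times U$ supported away from $D\times U$ it determines a line bundle on $\bar C\times U$ with a canonical trivialisation along $D\times U$, i.e.\ an element of $\Pic(\bar C\times U,D\times U)$. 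This assignment is additive, compatible with transfers, and kills the homotopy relation, so it induces a morphism $h_0(L(C))\to\Pic(\bar C\times-,D\times-)$; that it is an isomorphism after Nisnevich sheafification is the Suslin--Voevodsky computation of the $0$-th singular homology of a smooth curve \cite{sus-voe2} (the relative form over a smooth base following by the same argument; cf.\ also \cite[\S3.4]{voetri}). The sheafification cannot be omitted, since the relative Picard presheaf is not itself a Nisnevich sheaf. This step, upgrading the classical field-level statement to an isomorphism of Nisnevich sheaves, is the main obstacle in the whole argument.

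\emph{Part b).} I would first show $h_0^\Nis(X)$ is universally proper, hence birational by the lemma characterising birational (equivalently, universally proper) sheaves in $\HI_\Nis$ established above. Let $K/k$ be finitely generated, $C$ a regular $K$-curve and $c\in C$; since $h_0^\Nis(X)(k(C))$ is a quotient of $Z_0(X_{k(C)})=c(\Spec k(C),X)$, it suffices to lift the class of each closed point $x$ of $X_{k(C)}$. As $X$ is proper, the closure $\bar Z$ of $x$ in $X\times_k C$ has $0$-dimensional fibre over $c$, hence is finite and surjective over some open $V\ni c$, and $\bar Z|_V\in c(V,X)$ provides the desired lift in $h_0^\Nis(X)(\sO_{C,c})$. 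Now birationality gives $h_0^\Nis(X)(U)=h_0^\Nis(X)(K)$ for $U$ smooth connected with function field $K$, and $h_0^\Nis(X)(K)$ is the Suslin homology of the smooth projective $d$-fold $X_K$, which by duality in $\DM_-^\eff$ equals $\Hom_{\DM_-^\eff}(M(X_K),\Z(d)[2d])=CH^d(X_K)=CH_0(X_K)$, $d=\dim X$ (see \cite{fv}). For general smooth $U$ one takes the product over its components, obtaining \eqref{eq:cx-formula}; birationality of $h_0^\Nis(X)$ was just shown.

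\emph{Part c).} The open immersion $C\hookrightarrow\bar C$ induces $\phi\colon h_0^\Nis(C)\to h_0^\Nis(\bar C)$, and $h_0^\Nis(\bar C)$ is proper (by b), or by the proper-curve example). Put $k'=\Gamma(\bar C,\sO)$, the field of constants, a finite separable extension of $k$, and $E=\Gamma(D,\sO)$, an \'etale $k$-algebra; the inclusion $D\subset\bar C$ yields $k'\hookrightarrow E$, hence a monomorphism $R_{k'/k}\G_m\hookrightarrow R_{E/k}\G_m$ in $\HI_\Nis$, whose cokernel $T$ is a torus fitting into $0\to R_{k'/k}\G_m\to R_{E/k}\G_m\to T\to0$. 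I claim $\phi$ is an epimorphism with kernel $T$, which is exactly the data exhibiting $h_0^\Nis(C)$ as curve-like (with $\bar\sF=h_0^\Nis(\bar C)$, $E_1=k'$, $E_2=E$). Since a complex in $\HI_\Nis$ over a perfect field is exact iff it is so on the function field of every smooth $k$-scheme, it suffices to check this after evaluation at each such $K$. There part a) identifies $h_0^\Nis(C)(K)=\Pic(\bar C_K,D_K)$ and $h_0^\Nis(\bar C)(K)=\Pic(\bar C_K)$, with $\phi(K)$ the canonical map, and the exact sequence of the pair
\[
\Gamma(\bar C_K,\sO^*)\to\Gamma(D_K,\sO^*)\to\Pic(\bar C_K,D_K)\to\Pic(\bar C_K)\to\Pic(D_K)=0,
\]
together with $\Gamma(\bar C_K,\sO^*)=(k'\otimes_kK)^*=(R_{k'/k}\G_m)(K)$, $\Gamma(D_K,\sO^*)=(E\otimes_kK)^*=(R_{E/k}\G_m)(K)$ and the vanishing $H^1_\Nis(\Spec K,R_{k'/k}\G_m)=0$ (so that $T(K)$ is the cokernel of $(R_{k'/k}\G_m)(K)\to(R_{E/k}\G_m)(K)$), identifies $\Ker\phi(K)$ with $T(K)$ and shows $\phi(K)$ is onto. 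Granting a), everything in b) and c) is routine bookkeeping with these exact sequences, all exactness being tested on function fields via Voevodsky's Gersten resolution.
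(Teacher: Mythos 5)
Your proof is correct, and the three parts line up with the paper's intentions, but part c) is reached by a genuinely different route. For a) you defer, as the paper does, to the Suslin--Voevodsky computation of $h_0$ of a curve; for b) you reconstruct the content of the cited result (Huber--Kahn \cite[Th.\ 2.2]{motiftate}) by first establishing universal properness (hence birationality) via the cycle-lifting argument and then invoking Poincar\'e duality over the function field, which is a faithful reproduction of the original argument rather than a new one. The real divergence is in c). The paper extracts the four-term exact sequence
\[0\to R_{E/k}\G_m\to R_{\Gamma(D,\sO)/k}\G_m\to h_0^\Nis(C)\to h_0^\Nis(\bar C)\to 0\]
directly from the Gysin triangle $M(D)(1)[1]\to M(C)\to M(\bar C)\to{}$ in $\DM_-^\eff$ (taking homotopy sheaves), which is short once you trust the identification of the relevant $H_1$. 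You instead \emph{deduce} the same sequence from part a): you identify sections over any function field $K$ with relative Picard groups, write down the five-term exact sequence attached to the pair $(\bar C_K,D_K)$, use $\Pic(D_K)=0$ and vanishing of $H^1_\Nis(K,-)$, and then appeal to the fact that exactness in $\HI_\Nis$ is detected on function fields. This is more elementary and makes the role of the constants $k'=\Gamma(\bar C,\sO)$ and the boundary algebra $\Gamma(D,\sO)$ transparent, at the cost of invoking the ``check on function fields'' principle which the paper avoids by working in the derived category throughout. Both arguments silently assume $D\neq\emptyset$ (otherwise the purported monomorphism $R_{k'/k}\G_m\hookrightarrow R_{E/k}\G_m$ is nonsense); in that degenerate case $h_0^\Nis(C)$ is already proper and curve-like with $T=0$, so nothing is lost, but it is worth noting explicitly.
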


\begin{proof}
a) and b) are proven in \cite[Th. 3.1]{sus-voe2}
and in \cite[Th. 2.2]{motiftate} respectively. 
With the notation of a), 
we put $E=H^0(\bar{C}, \sO_{\bar{C}})$.
Then c) follows from the exact sequence
\[0\to R_{E/k}\G_m\to R_{D/k}\G_m\to h_0^\Nis(C)\to h_0^\Nis(\bar C)\to 0\]
stemming from the Gysin exact triangle
\[M(D)(1)[1]\to M(C)\to M(\bar C)\by{+1}\]
of \cite[Prop. 3.5.4]{voetri}.
\end{proof}

\begin{remark} Let $\sF\in \HI_\Nis$ be curve-like.
The torus $T$ and proper sheaf $\bar{\sF}$ in \eqref{eq:curvelike}
are uniquely determined by $\sF$ up to unique isomorphism.
Indeed, this amounts to showing that any morphism $T \to \bar{\sF}$
is trivial. This is reduced to the case $T=R_{E/k} \G_m$ 
as in \eqref{eq:specialtori}, 
and further to $T=\G_m$ by adjunction as in Lemma \ref{lem:projformula2}.
Then we have 
$\Hom_{\HI_{\Nis}}(\G_m, \bar{\sF}) \cong \bar{\sF}_{-1}(k)=0$
by definition 
(see \eqref{eq:defofminusone} and Definition \ref{d7.1}).

We call $T$ and $\bar{\sF}$  the \emph{toric} and \emph{proper} part of $\sF$ respectively.
\end{remark}

\begin{lemma}\label{l10.2} a) Let $\sF\in \HI_\Nis$ be curve-like with toric part $T$, and let $C$ be a smooth
proper connected $k$-curve. Let $Z$ be a closed subset of $C$, $A=\sO_{C,Z}$ and $K=k(C)$. Then
the sequence
\[0\to T(A)\to T(K)\oplus \sF(A)\to \sF(K)\to 0\]
is exact.\\
b) Let $\sF_1,\dots,\sF_n\in \HI_\Nis$ be curve-like with toric parts $T_1, \dots, T_n$, and let $C,Z,A,K$ be as in a). Then the
group $\sF_1(K)\otimes\dots\otimes \sF_n(K)$ has the following presentation:
\begin{description}
\item[Generators] for each subset $I\subseteq \{1,\dots,n\}$, elements $[I;f_1,\dots, f_n]$
with $f_i\in \sF_i(A)$ if $i\in I$ and $f_i\in T_i(K)$ if $i\notin I$.
\item[Relations]\
\begin{itemize}
\item Multilinearity: 
\[[I;f_1,\dots,f_i+f'_i,\dots,f_n]=[I;f_1,\dots,f_i,\dots,f_n]+[I;f_1,\dots,f'_i,\dots,f_n].\]
\item Let $I\subsetneq \{1,\dots,n\}$ and let $i_0\notin I$. Let $[I;f_1,\dots,f_n]$ be a
generator. Suppose that $f_{i_0}\in T_{i_0}(A)$. Then
$[I;f_1,\dots,f_n]=[I\cup\{i_0\};f_1,\dots,f_n]$.
\end{itemize}
\end{description}
\end{lemma}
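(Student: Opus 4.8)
The plan is to deduce a) from the defining exact sequence \eqref{eq:curvelike} together with the known behaviour of tori on semilocal curves, and then to bootstrap b) from a) by an iterated ``peeling'' of the tensor factors. For a), I would start from $0\to T\to \sF\to\bar\sF\to 0$, which is exact in $\HI_\Nis$, and evaluate on the semilocal ring $A=\sO_{C,Z}$ and on $K=k(C)$. Since $\bar\sF$ is proper (Definition \ref{d7.1}), the map $\bar\sF(A)\to\bar\sF(K)$ is an isomorphism (this is the content of the ``proper $\iff$ $\sF(C)\iso\sF(k(C))$'' lemma, applied to the semilocalisation, or directly to each local ring). Moreover, by Voevodsky's Gersten/injectivity results \cite[Cor. 4.19]{voepre}, all the restriction maps $\sF(A)\to\sF(K)$, $T(A)\to T(K)$, $\bar\sF(A)\to\bar\sF(K)$ are injective, so the six-term exact sequence obtained by comparing the two exact rows $0\to T(A)\to \sF(A)\to\bar\sF(A)$ and $0\to T(K)\to\sF(K)\to\bar\sF(K)$ reduces, after using $\bar\sF(A)\iso\bar\sF(K)$ and a snake-lemma argument, to the asserted exactness of
\[0\to T(A)\to T(K)\oplus\sF(A)\to\sF(K)\to 0.\]
The only point needing care is surjectivity on the right: given $s\in\sF(K)$, its image $\bar s\in\bar\sF(K)=\bar\sF(A)$ lifts to $\sigma\in\sF(A)$, and then $s-\sigma$ maps to $0$ in $\bar\sF(K)$, hence comes from $T(K)$; this gives the surjectivity and pins down the kernel of the difference map as $T(A)$ (embedded diagonally).

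For b), the idea is to build the presentation one factor at a time. Tensoring the exact sequence from a) for $\sF_1$ with $\sF_2(K)\otimes\dots\otimes\sF_n(K)$ over $\Z$ — which is right exact — gives a presentation of $\sF_1(K)\otimes\sF_2(K)\otimes\dots\otimes\sF_n(K)$ with generators $f_1\otimes(\text{rest})$ for $f_1\in T_1(K)$ or $f_1\in\sF_1(A)$, subject to multilinearity and the relation identifying, for $f_1\in T_1(A)$, the class coming from $T_1(K)$ with the class coming from $\sF_1(A)$. (Here one uses that $T_1(A)\to T_1(K)$ and $T_1(A)\to\sF_1(A)$ are both injective, so the relations are exactly the ``diagonal'' ones, matching the statement.) Iterating this over the remaining factors $\sF_2,\dots,\sF_n$, and bookkeeping which indices have been put into ``$A$-form'' versus ``$T(K)$-form'' via the subset $I\subseteq\{1,\dots,n\}$, yields precisely the generators $[I;f_1,\dots,f_n]$ and the two families of relations: multilinearity in each slot, and the ``absorption'' relation $[I;f_1,\dots,f_n]=[I\cup\{i_0\};f_1,\dots,f_n]$ when $f_{i_0}\in T_{i_0}(A)$ for $i_0\notin I$.

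The main obstacle I expect is purely organisational rather than conceptual: making the iterated tensor argument in b) rigorous requires being careful that at each stage the sequence one tensors with is flat enough (it is not — one only has right exactness of $\otimes_\Z$), so that the presentation one reads off genuinely has no hidden relations. Concretely, one must check that the subgroup of relations produced at step $j$ does not collapse generators introduced at earlier steps beyond what is claimed; this follows because $T_i(A)\hookrightarrow T_i(K)$ and $T_i(A)\hookrightarrow\sF_i(A)$ are injections, so the amalgamation is along a common subgroup and the universal property of the resulting quotient is exactly the stated presentation. A secondary subtlety is that the $T_i$ are of the special form \eqref{eq:specialtori}; one does not actually need this for the present lemma (only injectivity of $T_i(A)\to T_i(K)$, which holds for any $\sF\in\HI_\Nis$ by \cite[Cor. 4.19]{voepre}), but it is worth remarking that it will be used in the subsequent construction of residue maps, not here.
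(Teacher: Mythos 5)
Your proof of part a) has the right overall shape but a genuine gap at exactly the point you flag as ``the only point needing care.'' You assert that $\bar s\in\bar\sF(A)$ lifts to $\sigma\in\sF(A)$, but this requires the map $\sF(A)\to\bar\sF(A)$ to be surjective, and you never establish this. Evaluating an exact sequence of Nisnevich sheaves on the (non-henselian, non-local) semilocal ring $A=\sO_{C,Z}$ only yields a left-exact sequence a priori; right-exactness is precisely what is at issue. The paper's proof supplies this ingredient by invoking \cite[Cor.~4.18]{voepre} (not just Cor.~4.19, which gives only injectivity): for a semilocal ring of a smooth variety, $\sG(A)\to\sG(K)$ is a \emph{split} injection functorially in $\sG\in\HI_\Nis$, so the top row $0\to T(A)\to\sF(A)\to\bar\sF(A)\to 0$ is a direct summand of the bottom row $0\to T(K)\to\sF(K)\to\bar\sF(K)\to 0$, and is therefore exact since the latter is (evaluation at $\Spec K$ being exact). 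Once exactness of the top row is known, your diagram chase gives the Mayer--Vietoris sequence, but without this step the snake-lemma argument does not close.

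Your treatment of b), namely iterated tensoring of the right-exact presentation from a) with the remaining $\sF_i(K)$, is exactly what is intended; the paper simply states ``b) follows from a)'' and omits the bookkeeping you spell out. Your remark that injectivity of $T_i(A)\hookrightarrow T_i(K)$ and $T_i(A)\hookrightarrow\sF_i(A)$ guarantees the relations are precisely the diagonal ones is the right thing to say, and this injectivity is again part of what the direct-summand argument gives you for free. In short: replace the appeal to \cite[Cor.~4.19]{voepre} by the stronger direct-summand statement of \cite[Cor.~4.18]{voepre} and observe that direct summands of exact complexes are exact; then your proof is complete.
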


\begin{proof} a) Consider the commutative diagram of $0$-sequences
\[\begin{CD}
0@>>> T(A)@>>> \sF(A)@>>> \bar \sF(A)@>>> 0\\
&&@VVV @VVV @V{\wr}VV\\
0@>>> T(K)@>>> \sF(K)@>>> \bar \sF(K)@>>> 0.
\end{CD}\]

By [the proof of] \cite[Cor. 4.18]{voepre}, the top sequence is a direct summand of the bottom
one, which is clearly exact. Thus the top sequence is exact as well, and the lemma follows from
a diagram chase.  Then b) follows from a).
\end{proof}

\begin{prop}\label{p10.2} Let $C/k$ be a smooth proper connected curve, and let $v\in C, K=k(C)$.
Then there exists a unique law 
associating to a system
$(\sF_1,\dots,\sF_n)$ of $n$ curve-like sheaves a homomorphism
\[\partial_v:\sF_1(K)\otimes\dots\otimes\sF_n(K)\otimes K^*\to \tilde
K(k(v);\sF_1,\dots,\sF_n)\] such that
\begin{thlist}
\item If $\sigma$ is a permutation of $\{1,\dots,n\}$, the diagram
\[\begin{CD}
\sF_1(K)\otimes\dots\otimes\sF_n(K)\otimes K^*@>\partial_v>> \tilde K(k(v);\sF_1,\dots,\sF_n)\\
@V{\sigma}VV @V{\sigma}VV \\
\sF_{\sigma(1)}(K)\otimes\dots\otimes\sF_{\sigma(n)}(K)\otimes K^*@>\partial_v>> \tilde
K(k(v);\sF_{\sigma(1)},\dots,\sF_{\sigma(n)})
\end{CD}\]
commutes.
\item If $[I,f_1,\dots,f_n]$ is a generator of $\sF_1(K)\otimes \dots\otimes \sF_n(K)$ as in
Lemma \ref{l10.2} b) for some $Z$
containing $v$, with $I=\{1,\dots,i\}$, then
\[\partial_v(f_1\otimes\dots\otimes f_n\otimes f)= 
\{f_1(v),\dots f_i(v), 
\partial_v(\{ f_{i+1},\dots, f_n, f\}_{K/K})\}_{k(v)/k}\] where
$\partial_v(\{ f_{i+1},\dots, f_n, f\}_{K/K})$ is the residue of \ref{10.5}.
\end{thlist}
\end{prop}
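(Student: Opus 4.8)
The strategy is to \emph{construct} $\partial_v$ on generators using the formula in (ii), and then verify that it respects the relations of Lemma \ref{l10.2} b); uniqueness is immediate since (i) and (ii) pin down the value on every generator (every element is a generator after a permutation bringing $I$ to the form $\{1,\dots,i\}$). So the real content is well-definedness. First I would fix $Z=\{v\}$ (or a finite set containing $v$; the presentation of Lemma \ref{l10.2} b) does not depend on $Z$ up to the obvious identifications, and one should check compatibility when $Z$ grows, which is harmless since enlarging $Z$ only adds generators). For a generator $[I;f_1,\dots,f_n]$ with $I=\{1,\dots,i\}$, so $f_1\in\sF_1(A),\dots,f_i\in\sF_i(A)$ and $f_{i+1},\dots,f_n\in T_{i+1}(K),\dots,T_n(K)$, define
\[
\partial_v(f_1\otimes\dots\otimes f_n\otimes f)
= \{f_1(v),\dots,f_i(v),\ \partial_v(\{f_{i+1},\dots,f_n,f\}_{K/K})\}_{k(v)/k},
\]
where the inner residue is the global residue map
$\partial_v:\tilde K(K;T_{i+1},\dots,T_n,\G_m)\to \tilde K(k(v);T_{i+1},\dots,T_n)$
of \S\ref{10.5} (available because each $T_j$ is a special torus as in \eqref{eq:specialtori}, so Proposition \ref{p10.1} b) applies), followed by the map $\tilde K(k(v);T_{i+1},\dots,T_n)\to \tilde K(k(v);\sF_1,\dots,\sF_n)$ induced by $f_1(v),\dots,f_i(v)$ and the inclusions $T_j\hookrightarrow \sF_j$, as in \ref{10.3}. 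Symmetry (i) then holds by construction once one checks the inner residue of \S\ref{10.5} is symmetric in its torus slots, which it is, being built from \eqref{eq14} and the Mackey structure of $\tilde K$.

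Next I would check the relations. Multilinearity in an entry $f_i$ with $i\in I$ follows from additivity of $f_i\mapsto f_i(v)$ and multiadditivity of the symbol $\{\ \}_{k(v)/k}$; multilinearity in an entry $f_j$ with $j\notin I$ follows from additivity of the global residue $\partial_v$ of \S\ref{10.5} in the torus variable $f_j$ (residue maps for $\tilde K$ are homomorphisms). The nontrivial relation is the second one in Lemma \ref{l10.2} b): if $f_{i_0}\in T_{i_0}(A)\subseteq T_{i_0}(K)$ for some $i_0\notin I$, then moving $i_0$ into $I$ must not change the value. Concretely, after permuting so that $I=\{1,\dots,i\}$ and $i_0=i+1$, one must show
\[
\{f_1(v),\dots,f_i(v),\ \partial_v(\{f_{i+1},f_{i+2},\dots,f_n,f\})\}_{k(v)/k}
= \{f_1(v),\dots,f_i(v),f_{i+1}(v),\ \partial_v(\{f_{i+2},\dots,f_n,f\})\}_{k(v)/k}.
\]
This reduces, after specializing the $f_1,\dots,f_i$ entries, to the identity in $\tilde K(k(v);T_{i+1},\dots,T_n)$:
\[
\partial_v(\{f_{i+1},f_{i+2},\dots,f_n,f\})
= \bigl(\text{image of } f_{i+1}(v) \text{ in the } T_{i+1}\text{-slot}\bigr)\cdot \partial_v(\{f_{i+2},\dots,f_n,f\}),
\]
which is exactly the statement that the global residue $\partial_v$ of \S\ref{10.5}, applied to a symbol one of whose torus entries is \emph{unramified} at $v$ (lies in $T_{i+1}(A)$), specializes that entry; this is the $\tilde K$-analogue of Corollary \ref{c1} a) / Proposition \ref{l3}, and follows from the construction of the residue in \S\ref{10.5} via Theorem \ref{t1} b) together with the projection-formula compatibility of Lemma \ref{l4}. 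I would prove this sublemma first, by unwinding \S\ref{10.5}: the residue there is $\partial_x\colon\tilde K(K;T_1,\dots,T_n,\G_m)\to\tilde K(k(v);T_1,\dots,T_n)$ obtained by writing $\tilde K(-;T_1,\dots,T_n,\G_m)\cong (T_1\otimes_{\HI_\Nis}\dots\otimes_{\HI_\Nis} T_n\otimes_{\HI_\Nis}\G_m)(-)$ (Proposition \ref{p10.1} b) + Theorem \ref{thm:k'-group}), then applying $\partial_x$ of \eqref{eq14} to $\sG=T_1\otimes\dots\otimes T_n$, then using the identification $(\sG\otimes\G_m)_{-1}\cong\sG$ of Theorem \ref{t1} b); so the desired specialization is literally \eqref{eq17} with $\sF=T_{i+1}$ and $a=f_{i+1}\in T_{i+1}(A)=\sF(\sO_{C,v})$.

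The main obstacle I anticipate is bookkeeping around the passage from the abstract residue of \S\ref{10.5} on tensor products of tori to the ``partially specialized'' symbol notation $\{g_1,\dots,g_j,\partial_v(\dots)\}$, and in particular making precise the functoriality statement in \ref{10.3} that lets one push $\tilde K(k(v);T_{i+1},\dots,T_n)$ into $\tilde K(k(v);\sF_1,\dots,\sF_n)$ via the sections $f_1(v),\dots,f_i(v)$ and the inclusions $T_j\hookrightarrow\sF_j$ — one needs this push-forward to be natural enough that it intertwines with the global residues and with the transfers $\Tr_{k(v)/k}$ appearing implicitly. None of this is conceptually deep, but it requires care because the sheaves $\sF_j$ are \emph{not} tori, only extensions of proper sheaves by tori, and the residue only ``sees'' the toric part; the content of the Lemma \ref{l10.2} b) presentation is precisely that this is the only structure needed. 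Once the sublemma (the $\tilde K$-version of Corollary \ref{c1} a)) is in place, everything else is a routine check against the generators and relations.
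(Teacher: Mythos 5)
Your proposal is correct and follows essentially the same route as the paper: define $\partial_v$ on the generators of Lemma \ref{l10.2}~b) by the formula in (ii), observe that multilinearity is formal, and reduce the nontrivial relation $[I;f_1,\dots,f_n]=[I\cup\{i_0\};f_1,\dots,f_n]$ (after permuting to $I=\{1,\dots,i\}$, $i_0=i+1$) to the specialization identity of Proposition \ref{l3}, i.e.\ \eqref{eq17}. The paper's proof is exactly this, stated in one line (``it suffices to check that $\partial_v$ agrees on relations \dots The claim then follows from Proposition \ref{l3}''); your write-up simply unwinds what that citation is doing. In particular your ``sublemma'' is precisely \eqref{eq17} with $\sF=T_{i+1}$, $a=f_{i+1}\in T_{i+1}(\sO_{C,v})$, $\sG=T_{i+2}\otimes_{\HI_\Nis}\dots\otimes_{\HI_\Nis}T_n\otimes_{\HI_\Nis}\G_m$, once the residue of \S\ref{10.5} is identified, via Proposition \ref{p10.1}~b) and Theorem \ref{thm:k'-group}, with the map $\partial_v$ of \eqref{eq14} composed with Theorem \ref{t1}~b). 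No gap.
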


\begin{proof} By Lemma \ref{l10.2} b), it suffice to check that $\partial_v$ agrees on
relations. Up to permutation, we may assume $I=\{1,\dots,i\}$ and $i_0=i+1$. The claim then
follows from Proposition \ref{l3}.
\end{proof}

\begin{lemma}\label{l11.2} a) Keep the notation of Proposition \ref{p10.2}. Let $L/K$ be a finite extension; write $D$ for the smooth projective model of $L$ and $h:D\to C$ for the corresponding morphism. Let $Z=h^{-1}(v)$. Write $\sF_{n+1} = \G_m$. Then, for any $i\in \{1,\dots,n+1\}$, the diagram
\[\xymatrix{
\sF_1(L)\otimes\dots\otimes\sF_{n+1}(L)\ar[r]^{(\partial_w)}& \displaystyle\bigoplus_{w\in Z}\tilde K(k(w);\sF_1,\dots,\sF_n)\ar[dd]^{(\Tr_{k(w)/k(v)})}\\
\sF_1(K)\otimes\dots\otimes\sF_i(L)\otimes \dots\otimes\sF_{n+1}(K)\ar[u]^u\ar[d]_d\\
\sF_1(K)\otimes\dots\otimes\sF_{n+1}(K)\ar[r]^{\partial_v}& \tilde K(k(v);\sF_1,\dots,\sF_n)
}\]
commutes, where $u$ is given componentwise by functoriality for $j\ne i$ and by the identity for $j=i$, and $d$ is given componentwise by the identity for $j\ne i$ and by $\Tr_{L/K}$ for $j=i$.\\
b) The homomorphisms $\partial_v$ induce
resi\-due maps
\[\partial_v:\left(\sF_1\oo^M\dots\oo^M\sF_n\oo^M\G_m\right)(K)\to \tilde K(k(v);\sF_1,\dots,\sF_n).\]
which verify the compatibility of Lemma \ref{l4} b).
\end{lemma}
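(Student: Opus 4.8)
The plan is that b) follows fairly formally from a), so the real work is in a), and I would prove a) by reducing to generators. Applying Lemma \ref{l10.2}a) to each of the $n+1$ factors, $\sF_j(K)$ (resp. $\sF_j(L)$) is the amalgam of $T_j(K)$ (resp. $T_j(L)$) and $\sF_j(\sO_{C,v})$ (resp. $\sF_j(\sO_{D,Z})$, where $Z=h^{-1}(v)$) over $T_j(\sO_{C,v})$ (resp. $T_j(\sO_{D,Z})$). Hence the source $\sF_1(K)\otimes\cdots\otimes\sF_i(L)\otimes\cdots\otimes\sF_{n+1}(K)$ of $u$ and $d$ is generated by pure tensors $f_1\otimes\cdots\otimes f_{n+1}$, in the style of Lemma \ref{l10.2}b), in which, for $j\ne i$, $f_j$ lies in $T_j(K)$ or in $\sF_j(\sO_{C,v})$, and $f_i$ lies in $T_i(L)$ or in $\sF_i(\sO_{D,Z})$. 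Using the permutation equivariance of Proposition \ref{p10.2}(i), which is visibly compatible with the componentwise maps $u$ and $d$, I may assume that the ``proper part'' of the generator is $I=\{1,\dots,m\}$ and that the distinguished index $i$ equals $1$ (the \emph{proper case}, where $f_i\in\sF_i(\sO_{D,Z})$) or else $i\ge m+1$ (the \emph{toric case}, which also covers $i=n+1$).

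In the proper case, $\Tr_{L/K}f_1$ lies in $\sF_1(\sO_{C,v})$ via the transfer attached to the finite flat morphism $\Spec\sO_{D,Z}\to\Spec\sO_{C,v}$, so both $d(\xi)$ and $u(\xi)$ are again generators of the kind in Lemma \ref{l10.2}b); by Proposition \ref{p10.2}(ii), $\partial_v(d(\xi))$ and each $\partial_w(u(\xi))$ are expressed as a product, in $\tilde K(\,\cdot\,;\sF_1,\dots,\sF_n)$, of the evaluations at $v$ (resp. $w$) of the proper factors against the residue in $\tilde K(\,\cdot\,;T_{m+1},\dots,T_n)$ of the Milnor-type symbol $\{f_{m+1},\dots,f_{n+1}\}$ of \S\ref{10.5}. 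The identity then reduces to two cycle-module computations: first, $(\Tr_{L/K}f_1)(v)=\sum_{w\mid v}e_w\,\Tr_{k(w)/k(v)}(f_1(w))$, obtained by evaluating the local symbol $\partial_v(\Tr_{L/K}(f_1),t)$ with $t$ a uniformizer at $v$ via Corollary \ref{c1}a), using the projection formula $\Tr_{L/K}(f_1)\cdot t=\Tr_{L/K}(f_1\cdot h^*t)$ together with Lemma \ref{l4}b); and second, $\partial_w\circ h^*=e_w\,h^*\circ\partial_v$ for the residues of \S\ref{10.5}, which is the compatibility of Lemma \ref{l4}a) for those sheaves. The ramification factors produced on the two sides then match once the non-distinguished factors are moved across $\Tr_{k(w)/k(v)}$ by the projection formula in $\tilde K$, i.e. by the defining relation of $\oo^M$.

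In the toric case the transfer $\Tr_{L/K}$ alters a single entry of the Milnor-type symbol, and the required equality becomes the transfer reciprocity $\partial_v\circ\Tr_{L/K}=\sum_{w\mid v}\Tr_{k(w)/k(v)}\circ\partial_w$ for the residues of \S\ref{10.5}, which is the compatibility of Lemma \ref{l4}b) for those sheaves; the proper and remaining toric factors ride along via the same projection formula. The main obstacle throughout is precisely this bookkeeping of ramification indices: they must occur on both sides in matching positions, which is why I would derive the evaluation formula for $\Tr_{L/K}$ through Corollary \ref{c1} and Lemma \ref{l4} rather than by a naive base-change, and keep careful track of which residue ($h^*$ versus $\Tr_{L/K}$) each $e_w$ stems from.

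For b): the group $(\sF_1\oo^M\cdots\oo^M\sF_n\oo^M\G_m)(K)$ is the quotient of $\bigoplus_{L/K}\sF_1(L)\otimes\cdots\otimes\sF_n(L)\otimes L^*$, the sum over finite separable subextensions, by the projection-formula relations. I would define $\partial_v$ on the $L$-summand to be $\sum_{w\mid v}\Tr_{k(w)/k(v)}\circ\partial_w$, with $\partial_w$ as in Proposition \ref{p10.2} for the smooth projective model of $L$. Applying part a) to a tower $L_1/L_2/K$ — at each point $w_2$ over $v$ of the model of $L_2$, then summing over such $w_2$ after composition with $\Tr_{k(w_2)/k(v)}$, and invoking transitivity of transfers — shows that these maps annihilate the defining relations, hence glue to a homomorphism $\partial_v$ on $(\sF_1\oo^M\cdots\oo^M\G_m)(K)$. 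Finally, the compatibility with Lemma \ref{l4}b), namely $\partial_v\circ\Tr_{L/K}=\sum_{w\mid v}\Tr_{k(w)/k(v)}\circ\partial_w$ as maps out of $(\sF_1\oo^M\cdots\oo^M\G_m)(L)$, is immediate from this description together with transitivity of transfers.
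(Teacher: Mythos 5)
Your proof is correct and takes essentially the same route as the paper: reduce to generators in the style of Lemma \ref{l10.2}\,b), apply the explicit formula of Proposition \ref{p10.2}\,(ii), invoke the Lemma \ref{l4} compatibilities for the residue of \S\ref{10.5} and the Mackey projection formula, and deduce b) from a) by unwinding $\oo^M$ and using transitivity of transfers. The only cosmetic difference is that you permute the distinguished index $i$ to position $1$ (proper case) or $\ge m+1$ (toric case), whereas the paper keeps it fixed at $n$ (resp.\ $n+1$) and splits into cases (i), (ii) according to whether $f_n$ lands in $T_n(L)$ or $\sF_n(\sO_{D,Z})$; and you are somewhat more careful than the printed proof in tracking the ramification indices $e_w$ — in particular in spelling out the evaluation formula $(\Tr_{L/K}f)(v)=\sum_{w\mid v}e_w\,\Tr_{k(w)/k(v)}(f(w))$ via Corollary \ref{c1}\,a), the projection formula, and Lemma \ref{l4}\,b), which the paper leaves implicit.
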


\begin{proof} 
a) For clarity, we distinguish two cases: $i<n+1$ and $i=n+1$. 
In the former case, up to permutation we may assume $i=n$.  
It is enough to check commutativity 
on generators in the style of Lemma \ref{l10.2} b). 
Let $T_l$ denote the toric part of $\sF_l$.
In view of Lemma \ref{l10.2} a) and Proposition \ref{p10.2} (i),
it suffices to check the commutativity for 
$x=f_1\otimes\dots\otimes f_n\otimes f$
when one of the following two conditions is satisfied:

\begin{itemize}
\item[(i)]
for some $j \in \{ 0, \dots, n-1 \}$,
$f_l\in \sF_l(\sO_{C, v}) ~(1 \le l \le j)$,
$f_l\in T_l(K) ~(j+1 \le j \le n-1)$,
$f_n \in T_n(L)$ and $f\in K^*$.
\item[(ii)]
for some $j \in \{ 0, \dots, n-1 \}$,
$f_l\in \sF_l(\sO_{C, v}) ~~(1 \le l \le j)$,
$f_l\in T_l(K) ~(j+1 \le j \le n-1)$,
$f_n \in \sF_n(\sO_{D, Z})$ and $f\in K^*$.
\end{itemize}

Let $w\in Z$. 
If (i) holds, we have
\begin{align*}
\partial_w(u(x)) &= \{ f_1(w),\dots, f_j(w), 
\partial_w(\{f_{j+1},\dots, f_n, f\}_{L/L})\}_{k(w)/k(w)} \\
\intertext{and}
\partial_v(d(x)) &= \{f_1(v),\dots, f_j(v),
\partial_v(\{f_{j+1},\dots, \Tr_{L/K}(f_n), f\}_{K/K})\}_{k(v)/k(v)}. 
\end{align*}
Observe that
the restriction of $f_l(v)$ to $k(w)$ is $f_l(w)$
for every $w \in Z$ and $l=1, \dots, j$.
Since the residue maps $(\pd_w)$ of \S \ref{10.5}
verify the compatibility of Lemma \ref{l4},
the commutativity for $x$ follows.
(Recall that 
$\Tr_{k(w)/k(v)}( \{ a_1, \dots, a_n \}_{k(w)/k(w)})
=\{ a_1, \dots, a_n \}_{k(w)/k(v)}.$)

If (ii) holds, we have
\begin{align*}
\partial_w(u(x)) &=\{f_1(w),\dots, f_j(w),
\partial_w(\{f_{j+1},\dots, f_{n-1}, f\}_{L/L}), f_n(w)\}_{k(w)/k(w)}\\
\intertext{and}
\partial_v(d(x)) &=\{f_1(v),\dots, f_j(v),
\partial_v(\{f_{j+1},\dots\, f_{n-1}, f\}_{K/K}), \Tr_{L/K}(f_n)(v) \}_{k(v)/k(v)}.\end{align*}
In addition to the observation mentioned in (i),
we remark that the restriction of 
$\partial_v(\{f_{j+1},\dots, f_{n-1}, f\}_{K/K})$
to $k(w)$ is
$\partial_w(\{f_{j+1},\dots, f_{n-1}, f\}_{L/L})$
for every $w \in Z$.
The commutativity for $x$ follows
from Lemma \ref{l4} b) applied to $\sF_n$.

If $i=n+1$ the check is similar, the projection formula working on the last variable. 

Now b) follows from a) and the definition of $\oo^M$ as in \cite[p. 84]{decomposable}.
\end{proof}

\begin{lemma}\label{l11.1} The homomorphisms $\partial_v$ of 
Lemma \ref{l11.2}
induce
resi\-due maps
\[\partial_v:\tilde K(K;\sF_1,\dots,\sF_n,\G_m)\to \tilde K(k(v);\sF_1,\dots,\sF_n).\]
which verify the compatibility of Lemma \ref{l4} b).
\end{lemma}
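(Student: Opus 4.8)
The plan is to prove that the homomorphisms $\partial_v$ of Lemma~\ref{l11.2}~b) annihilate the subgroups $\Tr_{E/K}\,St(E;\sF_1,\dots,\sF_n,\G_m)$ for all finite extensions $E/K$; the induced maps on the $\tilde K$-groups then automatically satisfy the compatibility of Lemma~\ref{l4}~b), since every arrow in the square of Lemma~\ref{l4}~b) commutes with the canonical surjections from the $\oo^M$-groups onto the $\tilde K$-groups, and that compatibility is known at the level of $\oo^M$ by Lemma~\ref{l11.2}~b). To handle the relations, fix a finite extension $E/K$, write $E=k(D)$ with $D$ the smooth projective model of $E$ and $h\colon D\to C$ the corresponding finite surjective morphism, and let $\eta\in St(E;\sF_1,\dots,\sF_n,\G_m)$. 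By the compatibility of Lemma~\ref{l4}~b) for the maps of Lemma~\ref{l11.2}~b),
\[\partial_v\bigl(\Tr_{E/K}\,\eta\bigr)=\sum_{w\in h^{-1}(v)}\Tr_{k(w)/k(v)}\bigl(\partial_w\eta\bigr),\]
where $\partial_w$ is the residue map attached to $D$ and $w$. Since $\Tr_{k(w)/k(v)}$ lands in $\tilde K(k(v);\sF_1,\dots,\sF_n)$, it is enough to see that $\partial_w\eta=0$ for each $w$; renaming $(D,E,w)$ as $(C,K,v)$, we are reduced to showing that $\partial_v$ kills a Steinberg generator
\[\xi=a_1\otimes\dots\otimes\chi_i(a)\otimes\dots\otimes\chi_j(1-a)\otimes\dots\otimes a_{n+1}\in(\sF_1\oo^M\dots\oo^M\sF_n\oo^M\G_m)(K),\]
with $\sF_{n+1}=\G_m$, $i<j$, $a\in K^*\setminus\{1\}$, cocharacters $\chi_i\colon\G_m\to\sF_i$ and $\chi_j\colon\G_m\to\sF_j$, and $a_m\in\sF_m(K)$ for $m\ne i,j$.

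The key remark is that a cocharacter of a curve-like sheaf factors through its toric part: from \eqref{eq:curvelike}, Proposition~\ref{p2}, and the vanishing $\bar\sF_{-1}(k)=0$ for $\bar\sF$ proper (see \eqref{eq:defofminusone} and Definition~\ref{d7.1}), we get $\Hom_{\HI_\Nis}(\G_m,\sF)=\sF_{-1}(k)=T_{-1}(k)=\Hom_{\HI_\Nis}(\G_m,T)$. Thus $\chi_i(a)\in T_i(K)$, and $\chi_j(1-a)\in T_j(K)$ if $j\le n$, whereas $\chi_j(1-a)=1-a\in\G_m(K)$ if $j=n+1$. Now apply Lemma~\ref{l10.2}~a) to write each remaining section $a_m$ $(m\ne i,j)$ as a sum of a section over $\sO_{C,v}$ and a section of $T_m$ over $K$, and expand $\xi$ multilinearly; using Lemma~\ref{l10.2}~b) and the permutation property Proposition~\ref{p10.2}~(i) we may assume that $\xi$ is a generator $[I;f_1,\dots,f_n]$ of $\sF_1(K)\otimes\dots\otimes\sF_n(K)$ as in Lemma~\ref{l10.2}~b), tensored with an element $f\in K^*$, where $I=\{1,\dots,p\}$ is a prefix, the $f_l$ $(l\le p)$ are sections over $\sO_{C,v}$, the $f_l$ $(l>p)$ are torus-valued, the indices $i,j$ (or just $i$, when $j=n+1$) lie in $\{p+1,\dots,n\}$, and $f=a_{n+1}$ if $j\le n$ while $f=1-a$ if $j=n+1$.

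By Proposition~\ref{p10.2}~(ii) (through Lemma~\ref{l11.2}~b)), $\partial_v(\xi)$ is then a sum of terms
\[\bigl\{\,f_1(v),\dots,f_p(v),\ \partial_v\bigl(\{f_{p+1},\dots,f_n,f\}_{K/K}\bigr)\,\bigr\}_{k(v)/k},\]
where the last $\partial_v$ is the torus residue of \S\,\ref{10.5}. In each term the inner class $\{f_{p+1},\dots,f_n,f\}$, regarded in $\tilde K(K;T_{p+1},\dots,T_n,\G_m)$, is a Steinberg relation: it carries $\chi_i(a)$ in a torus slot and either $\chi_j(1-a)$ in another torus slot (if $j\le n$) or $1-a$ in the $\G_m$ slot (if $j=n+1$, with the cocharacters $\chi_i\colon\G_m\to T_i$ and $\chi_j\colon\G_m\to\G_m$), the argument $a\in K^*\setminus\{1\}$ being common to both cases. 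Hence it vanishes in $\tilde K(K;T_{p+1},\dots,T_n,\G_m)$. Since the residue of \S\,\ref{10.5} is, by Proposition~\ref{p10.1}~b) and Theorem~\ref{thm:k'-group}, a genuine homomorphism \emph{out of} $\tilde K(K;T_{p+1},\dots,T_n,\G_m)$, it sends this class to $0$; and $\{f_1(v),\dots,f_p(v),0\}_{k(v)/k}=0$ because it is a value of the homomorphism $\tilde K(k(v);\sF_{p+1},\dots,\sF_n)\to\tilde K(k(v);\sF_1,\dots,\sF_n)$ of \S\,\ref{10.3}. Therefore $\partial_v(\xi)=0$, which completes the argument.

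I expect the real obstacle to be the bookkeeping in the second step: one must check that, after the multilinear expansion and the permutation, the two Steinberg slots $i$ and $j$ can be routed \emph{simultaneously} into the toric bundle feeding the residue of \S\,\ref{10.5} rather than being evaluated at $v$, and one must keep cleanly apart the two cases according to whether the distinguished $\G_m$ is or is not one of the two Steinberg slots. Granting this reorganization, the conclusion is immediate from the fact that the torus residue of \S\,\ref{10.5} already factors through $\tilde K$.
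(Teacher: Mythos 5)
Your proof is correct and follows essentially the same route as the paper's: reduce via Lemma~\ref{l11.2}~b) to killing Steinberg relations over $K$ itself, observe that cocharacters of curve-like sheaves factor through the toric part, apply the normal-form presentation of Lemma~\ref{l10.2} and Proposition~\ref{p10.2}~(ii), and conclude because the inner torus residue of \S\,\ref{10.5} is a well-defined homomorphism out of $\tilde K(K;T_{p+1},\dots,T_n,\G_m)$ and hence annihilates the Steinberg element sitting in the toric slots. The only cosmetic difference is that the paper first reduces ``by functoriality'' to the case $\chi_i=\chi_j=\mathrm{id}$ (replacing $\sF_i,\sF_j$ by $\G_m$) before putting the other slots in normal form, whereas you retain the cocharacters and note directly that their images land in the toric parts; the two reorganizations are interchangeable, and the rest of the argument coincides.
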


\begin{proof} Set $\sF_{n+1}=\G_m$. Let $i<j$ be two elements of $\{1,\dots,n+1\}$ and let
$\chi_i:\G_m\to \sF_i$, $\chi_j:\G_m\to \sF_j$ be two cocharacters. Let $f\in K^*-\{1\}$. We
must show that $\partial_v$ vanishes on 
\[x=f_1\otimes\dots \otimes \chi_i(f)\otimes\dots \otimes\chi_j(1-f)\otimes\dots\otimes f_{n+1}\] 
for any $(f_1,\dots,f_{n+1})\in \sF_1(K)\times \dots
\times \sF_{n+1}(K)$ (product excluding $(i,j)$). By functoriality, we may assume that
$\chi_i,\chi_j$ are the identity cocharacters.  We distinguish two cases for clarity: $j<n+1$ and
$j=n+1$.
But exactly the same argument works for both cases.
Presently we suppose $j<n+1$. 

Up to permutation, we may assume $i=n-1$, $j=n$. Let us say that an element 
$(x_1,\dots,x_{n-2})\in \sF_1(K)\times\dots \times\sF_{n-2}(K)$ is \emph{in normal form} if, for
each
$i$, either
$x_i\in \sF_i(\sO_v)$ or $x_i\in T_i(K)$. Then Lemma
\ref{l10.2} reduces us to the case where
$(f_1, \dots, f_{n-2})$ is in normal
form. Up to permutation, we may assume that $f_r\in \sF_r(\sO_v)$ for $r\le r_0$ and $f_r\in
T_r(K)$ for $r_0<r\le n-2$. Then
\[\partial_v x= \{f_1(v),\dots, f_{r_0}(v),
   \partial_v(\{f_{r_0+1}, \dots, f_{n-2}, f, (1-f), f_{n+1}\}_{K/K})\}_{k(v)/k(v)}.
\]

Let $\phi_v:\tilde
K(k(v),T_{r_0+1},\dots,T_n)\to \tilde K(k(v),\sF_1,\dots ,\sF_n)$ be the homomorphism induced
by $(f_1(v),\dots, f_{r_0}(v))$ via \eqref{eq10.1}, and let
$\phi_K:T_{r_0+1}(K)\otimes\dots\otimes T_n(K)\otimes K^*\to  \sF_1(K)\otimes\dots
\otimes\sF_n(K)\otimes K^*$ be the analogous homomorphism defined by 
$(f_1,\dots,f_{r_0})$. The diagram
\[\begin{CD}
T_{r_0+1}(K)\otimes\dots\otimes T_n(K)\otimes K^*@>\partial_v>> \tilde
K(k(v);T_{r_0},\dots,T_n)\\
@V{\phi_K}VV @V{\phi_v}VV\\
\sF_1(K)\otimes\dots\otimes\sF_n(K)\otimes K^*@>\partial_v>> \tilde K(k(v);\sF_1,\dots,\sF_n)
\end{CD}\]
commutes. But the top map factors through $\tilde K(K;T_{r_0+1},\dots,T_n,\G_m)$, hence the
desired vanishing.

Thus we have shown that the map $\partial_v$ of Proposition \ref{p10.2} vanishes on
$St(K;\sF_1,\dots,\sF_n,\G_m)$. The conclusion now follows from Lemma \ref{l11.2} b).
\end{proof}

\begin{para} Let $\sF\in \HI_\Nis$ and let $C$ be a smooth proper $k$-curve. The \emph{support}
of a section $f\in \sF(k(C))$ is the finite set 
\[\Supp(f)=\{c\in C\mid \partial_c f\ne 0\}.\]
\end{para}

The following proposition generalizes Lemma \ref{l7.1}:

\begin{prop}\label{goodpositionlemma2}
Let $\sF_1,\dots,\sF_n$ be $n$ curve-like sheaves, and let 
$C$ be a smooth proper $k$-curve. Put $\sF_{n+1}=\G_m$.
If the  field $k$ is infinite, the group $\tilde
K(k(C);\sF_1,\dots,\sF_n,\G_m)$ is generated by elements
$\{f_1, \dots, f_{n+1} \}_{k(D)/k(C)}$
where $D$ is another curve, $D\to C$ is a finite surjective morphism and $f_i \in
\sF_i(k(D))$ satisfy
\begin{equation}\label{eq:goodpos}
\Supp(f_i) \cap \Supp(f_j) = \emptyset 
\quad \text{for all } 1 \leq i<j \leq n.
\end{equation}
\end{prop}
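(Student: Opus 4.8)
The plan is to imitate the proof of Lemma~\ref{l7.1}: induct on $n$, and at the inductive step use a version of Lemma~\ref{l7.3} adapted to curve-like sheaves, the new ingredient being that the Steinberg relations built into $\tilde K$ (Definition~\ref{d8.1}) let us move supports around. First the reductions. By the construction of $\oo^M$ and of $\tilde K$, the group $\tilde K(k(C);\sF_1,\dots,\sF_n,\G_m)$ is generated by symbols $\{h_1,\dots,h_{n+1}\}_{k(D)/k(C)}:=\Tr_{k(D)/k(C)}[h_1\otimes\dots\otimes h_{n+1}]$ with $D\to C$ finite surjective, $h_i\in\sF_i(k(D))$ for $i\le n$ and $h_{n+1}\in k(D)^*$. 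The subgroup generated by those symbols (for all $D$) that satisfy \eqref{eq:goodpos} is stable under transfer, since $\Tr_{k(D)/k(C)}\{h'_\bullet\}_{k(D')/k(D)}=\{h'_\bullet\}_{k(D')/k(C)}$ and $\Supp$ is unchanged by sign changes. Hence it suffices to prove: for $D$ a smooth proper $k$-curve and arbitrary $h_i\in\sF_i(k(D))$ ($i\le n$), $h_{n+1}\in k(D)^*$, the class of $h_1\otimes\dots\otimes h_{n+1}$ in $\tilde K(k(D);\sF_1,\dots,\sF_n,\G_m)$ is a sum of symbols $\{h'_1,\dots,h'_{n+1}\}_{k(D')/k(D)}$ satisfying \eqref{eq:goodpos}. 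Renaming $D$ as $C$, we induct on $n$; the cases $n\le 1$ are trivial as \eqref{eq:goodpos} is then vacuous.

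For the inductive step, put $F=k(C)$ and consider $h_1\otimes\dots\otimes h_n\otimes h_{n+1}$. Inserting the fixed section $h_n$ in the $n$-th slot gives a map $(\sF_1\oo^M\dots\oo^M\sF_{n-1}\oo^M\G_m)(F)\to\tilde K(F;\sF_1,\dots,\sF_n,\G_m)$; it is compatible with transfers by the projection formula, and it carries the relation subgroup defining $\tilde K(F;\sF_1,\dots,\sF_{n-1},\G_m)$ into that defining $\tilde K(F;\sF_1,\dots,\sF_n,\G_m)$ — a Steinberg relation \eqref{eq:steinbergrel} in the $(n-1)$-sheaf group, with the $\G_m$-slot in position $n$, becomes a Steinberg relation in the $n$-sheaf group with the $\G_m$-slot pushed to position $n+1$ and $h_n$ inserted in position $n$. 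Applying the inductive hypothesis to the image of $h_1\otimes\dots\otimes h_{n-1}\otimes h_{n+1}$ in $\tilde K(F;\sF_1,\dots,\sF_{n-1},\G_m)$, then inserting $h_n$, and then invoking the transfer reduction above, we are left with the following task: given a symbol $[b_1\otimes\dots\otimes b_{n-1}\otimes c\otimes b]$ over a field $k(D)$ in which $b_1,\dots,b_{n-1}$ have pairwise disjoint supports on $D$, $c\in\sF_n(k(D))$ and $b\in k(D)^*$, rewrite its $n$-th slot so that the support avoids the finite set $Z:=\bigcup_{i<n}\Supp(b_i)$.

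To do this — the analogue of Lemma~\ref{l7.3} — I would split $c=g+t$ via Lemma~\ref{l10.2} a) with $A=\sO_{D,Z}$, where $g\in\sF_n(\sO_{D,Z})$ and $t$ lies in the toric part $T_n(k(D))$. By Corollary~\ref{c1}, $g$ has support disjoint from $Z$, so the $g$-term is already in good position. For the $t$-term, Lemma~\ref{lem:projformula2} and the reduction in the proof of Proposition~\ref{p10.1} b) let us assume, after a further finite cover, that this slot is $\chi(u)$ for a cocharacter $\chi\colon\G_m\to\sF_n$ and a rational function $u$. Since $\chi$ and the identity cocharacter of $\G_m$ in position $n+1$ are both cocharacters and $\tilde K$ kills the Steinberg relation $\chi(a)\otimes(1-a)$, the rule $\{a,b'\}\mapsto[b_1\otimes\dots\otimes b_{n-1}\otimes\chi(a)\otimes b']$ is a well-defined homomorphism $K_2^M(k(D))\to\tilde K(k(D);\sF_1,\dots,\sF_n,\G_m)$. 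Using that $k$ is infinite, Lemma~\ref{l7.3} gives $\{u,b\}=\sum_i\{c_i,d_i\}$ in $K_2^M(k(D))$ with each $c_i$ a unit at every point of $Z$; by naturality of $\partial_v$ and of $(-)_{-1}$, $\chi(c_i)$ then has support disjoint from $Z$. Reassembling, $[b_1\otimes\dots\otimes c\otimes b]$ becomes a sum of symbols whose first $n$ slots are $b_1,\dots,b_{n-1}$ together with $g$ or one of the $\chi(c_i)$, all with pairwise disjoint supports, which completes the induction.

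The main obstacle is exactly the curve-like analogue of Lemma~\ref{l7.3} carried out in the previous paragraph. A curve-like sheaf may admit no nonzero cocharacters at all — when it is proper one has $\sF_{-1}(k)=0$ by definition — so one cannot apply the classical identity $\{\pi_i,\pi_j\}=\{-\pi_i/\pi_j,\pi_i+\pi_j\}$ directly to the section $c$; one must first detach the toric part, which is the only place a genuine Steinberg relation is available, and deal with the remaining ``proper-like'' section $g$ through its extension over $Z$. The bookkeeping is further complicated because each step may introduce a new finite cover $D'\to D$ and enlarge $Z$, and one must check that this never lets the already-separated supports recombine; making the induction close despite this is the real work, and it is precisely what the Steinberg relations in the definition of $\tilde K$ were designed to make possible.
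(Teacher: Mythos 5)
Your proposal takes a genuinely different route from the paper's, which is much terser: the paper invokes Lemma~\ref{l10.2}~b) to reduce to the case where every $\sF_i$ is a Weil restriction $R_{E_i/k}\G_m$, then uses the base-change formula for Weil restrictions together with Lemma~\ref{lem:projformula2} to reduce further to $\sF_i=\G_m$, and finally cites Lemma~\ref{l7.1} outright. You instead run an induction directly on the number $n$ of curve-like sheaves, in effect re-doing the inductive scheme underlying Lemma~\ref{l7.1} at the level of $\tilde K$-groups. Your inductive step is where the curve-like structure is used essentially: you apply Lemma~\ref{l10.2}~a) to split the new slot into a ``proper'' piece that extends over $\sO_{D,Z}$ (hence has support disjoint from $Z=\bigcup_{i<n}\Supp(b_i)$) plus a ``toric'' piece that admits cocharacters, and then handle the toric piece via Lemma~\ref{lem:projformula2} and the $K_2^M$-statement Lemma~\ref{l7.3}. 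What your argument buys is transparency about exactly where the Steinberg relation and the proper/toric decomposition enter, and it makes explicit the $Z$-enlargement bookkeeping that the paper's one-line reduction ``to $R_{E_i/k}\G_m$'' conceals. You correctly flag the key obstruction --- the proper quotient $\bar\sF$ has $\bar\sF_{-1}=0$, hence no cocharacters and no Steinberg relations, so its contribution must be controlled by extending over $\sO_{D,Z}$, not by Milnor identities. The paper's approach is more compact because it delegates the entire induction to Lemma~\ref{l7.1} after a global reduction to $\G_m$; both proofs run on the same three ingredients (the curve-like short exact sequence via Lemma~\ref{l10.2}, the projection formula Lemma~\ref{lem:projformula2}, and the semilocal Milnor identity Lemma~\ref{l7.3} or its iterate Lemma~\ref{l7.1}), and the mild looseness in each --- lifting from $T_n$ to $R_{E_2/k}\G_m$ by Hilbert~90, splitting $E_2\otimes_k k(D)$ into its field factors, keeping track of covers --- is of the same order. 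Your proof is correct as sketched and is a legitimate, more self-contained alternative to the paper's.
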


\begin{proof}
Lemma \ref{l10.2} b) reduces us to 
the case where all $\sF_i$ are
$R_{E_i/k} \G_m$ for some \'etale $k$-algebras $E_i/k$.
Using the formula
\[ (R_{E_1/k} \G_{m, E_1})_{E_2} 
  \cong R_{E_1 \otimes_k E_2/E_2} \G_{m, E_1 \otimes E_2} 
\]
and Lemma \ref{lem:projformula2} repeatedly,
we are further reduced to the case all $\sF_i$ are $\G_m$.
Then it follows from Lemma \ref{l7.1}.
\end{proof}

\begin{lemma}\label{lem:additionalproperties}
Let $C, D, \sF_1, \dots, \sF_n$ be  
as in Proposition \ref{goodpositionlemma2}.
Let $f_i \in \sF_i(k(D))$ and $v \in D$.
Put 
$\xi := \{ f_1, \dots, f_{n+1} \}_{k(D)/k(C)}$,
regarded as an element of 
$\tilde K(k(C); \sF_1, \dots, \sF_n, \G_m).$
\begin{enumerate}
\item
If $v(f_{n+1}-1)>0$,
then we have $\pd_v( \xi )=0.$
\item
Suppose \eqref{eq:goodpos} holds.
If $v \in \Supp(f_i)$ for some $1 \leq i \leq n$, then
we have
\[ \pd_v(\xi)
  = \{ f_1(v), \dots, \pd_v(f_i, f_{n+1}), \dots, f_n(v) \}_{k(v)/k}.
\]
\end{enumerate}
\end{lemma}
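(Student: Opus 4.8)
The plan is to reduce everything to the generator-level residue formula of Proposition \ref{p10.2}(ii) together with the known properties of the residue $\partial_v$ on the torus-level groups $\tilde K(K;T_1,\dots,T_m,\G_m)$ recalled in \S \ref{10.5}, which in turn inherit the behaviour of the Rosenlicht--Serre/local symbols from Corollary \ref{c1} and Lemma \ref{l4}. First I would observe that $\xi = \Tr_{k(D)/k(C)}\{f_1,\dots,f_{n+1}\}_{k(D)/k(D)}$, so by the projection-formula compatibility of Lemma \ref{l11.2} b) (the analogue of Lemma \ref{l4} b) for the maps $\partial_v$) it is enough to prove both assertions \emph{after} replacing $C$ by $D$, i.e. to treat the case $D=C$, $\xi = \{f_1,\dots,f_{n+1}\}_{k(C)/k(C)}$, and then apply $\sum_{w\mid v}\Tr_{k(w)/k(v)}$. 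Thus from now on $v$ is a closed point of $C$ and I must compute $\partial_v(f_1\otimes\dots\otimes f_{n+1})$ using Proposition \ref{p10.2}.

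For part (1): using Lemma \ref{l10.2} b), write $f_1\otimes\dots\otimes f_n\otimes f_{n+1}$ as a sum of generators $[I;g_1,\dots,g_n]\otimes f_{n+1}$ relative to a finite set $Z\ni v$; for each such generator with $I=\{1,\dots,i\}$, Proposition \ref{p10.2}(ii) gives
\[
\partial_v(g_1\otimes\dots\otimes g_n\otimes f_{n+1})
= \{g_1(v),\dots,g_i(v),\partial_v(\{g_{i+1},\dots,g_n,f_{n+1}\}_{K/K})\}_{k(v)/k},
\]
so it suffices to see that the inner residue $\partial_v(\{g_{i+1},\dots,g_n,f_{n+1}\}_{K/K})$ in $\tilde K(k(v);T_{i+1},\dots,T_n)$ vanishes when $v(f_{n+1}-1)>0$. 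This residue is, by construction in \S \ref{10.5}, obtained from the local symbol of Notation \ref{n1} applied in the last ($\G_m$) variable, so it vanishes by Corollary \ref{c1} b). Hence $\partial_v(\xi)=0$, proving (1).

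For part (2): assume \eqref{eq:goodpos} and $v\in\Supp(f_i)$ for some $i\le n$. By the disjointness hypothesis, $v\notin\Supp(f_j)$ for $j\ne i$ with $j\le n$, which (via Lemma \ref{l10.2} and the exactness of $0\to T_j(A)\to T_j(K)\oplus\sF_j(A)\to \sF_j(K)\to 0$) means each such $f_j$ lifts to $\sF_j(\sO_{C,v})$, while $f_i$ need only lift to $T_i(K)$ for a suitable $Z\ni v$. After a permutation putting $i$ last among $\{1,\dots,n\}$, Proposition \ref{p10.2}(ii) yields
\[
\partial_v(\xi)=\{f_1(v),\dots,f_{i-1}(v),f_{i+1}(v),\dots,f_n(v),\partial_v(\{f_i,f_{n+1}\}_{K/K})\}_{k(v)/k},
\]
and the inner term $\partial_v(\{f_i,f_{n+1}\}_{K/K})\in\tilde K(k(v);T_i)=T_i(k(v))\subseteq\sF_i(k(v))$ is precisely the local symbol $\partial_v(f_i,f_{n+1})$ of Notation \ref{n1} by Proposition \ref{l5} and the compatibility of \S \ref{10.5} with Theorem \ref{t1}. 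Reinstating the original ordering gives the claimed formula. The main obstacle I anticipate is purely bookkeeping: carefully tracking the permutations and the identifications $\tilde K(k(v);T_i)\cong T_i(k(v))\hookrightarrow \sF_i(k(v))$ so that the inner residue really is identified with $\partial_v(f_i,f_{n+1})$, and checking that the reduction to $D=C$ in the first paragraph is compatible with \eqref{eq:goodpos} being stated on $D$ rather than $C$ — but all the needed compatibilities are already packaged in Lemmas \ref{l4}, \ref{l11.2} and Proposition \ref{l3}.
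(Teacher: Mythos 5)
Your proof is correct and, modulo the fact that the paper's own proof is a one-line pointer (``This follows from Corollary~\ref{c1} and Proposition~\ref{l3}''), it spells out essentially the same argument: reduce to the torus-level residues of \S\ref{10.5} via the normal-form generators of Lemma~\ref{l10.2}~b), then invoke Corollary~\ref{c1}~b) for part~(1) and Corollary~\ref{c1}~a) together with Proposition~\ref{l3} for part~(2).

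One small imprecision to flag in your treatment of part~(2): it is not true that $f_i$ ``lifts to $T_i(K)$'' for a suitable $Z\ni v$. What Lemma~\ref{l10.2}~a) gives is a decomposition $f_i = t_i + \tilde f_i$ with $t_i \in T_i(K)$ and $\tilde f_i \in \sF_i(\sO_{C,v})$, and one must treat both summands: the $T_i$-part contributes $\partial_v(t_i,f_{n+1})\in T_i(k(v))$ via the residue of \S\ref{10.5}, while the $\sF_i(\sO_{C,v})$-part contributes $v(f_{n+1})\,\tilde f_i(v)$ by Corollary~\ref{c1}~a) (now applying Proposition~\ref{p10.2}~(ii) with $I=\{1,\dots,n\}$ and the tame symbol in the remaining slot). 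By multilinearity in the $i$-th slot and the bilinearity/additivity of the local symbol, these two terms add up to $\partial_v(f_i,f_{n+1})\in\sF_i(k(v))$, which is what the statement claims. As written, your identification $\partial_v(\{f_i,f_{n+1}\}_{K/K})\in\tilde K(k(v);T_i)=T_i(k(v))$ quietly drops the $\sF_i(\sO_{C,v})$ contribution, so $\partial_v(f_i,f_{n+1})$ would (wrongly) appear to land in the torus part only. Once you account for both summands the argument is complete, and the rest of your bookkeeping (the reduction to residues computed on $D$ and the Nisnevich-Gersten exactness giving $\partial_v f_j = 0 \Rightarrow f_j\in\IM(\sF_j(\sO_{C,v})\to\sF_j(K))$ for $j\neq i$) is correct.
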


\begin{proof} This follows from Corollary \ref{c1} and Proposition \ref{l3}.
\end{proof}

\begin{prop}\label{cor:trivial}
Let $C$ be a smooth projective connected curve,
and let $\sF_1, \dots, \sF_n \in \HI_{\Nis}$ be curve-like.
The composition
\begin{multline*}
 \sum_{v \in C} \Tr_{k(v)/k} \circ \pd_v : 
\tilde K(k(C);\sF_1,\dots,\sF_n,\G_m) \to \tilde K(k; \sF_1, \dots, \sF_n)\\
\to K(k; \sF_1, \dots, \sF_n)
\end{multline*}
is the zero-map.
\end{prop}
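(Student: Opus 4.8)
The plan is to reduce, via the good position result, to recognizing each relevant element as a Somekawa relation. First I would dispose of a finite base field: if $k$ is finite, choose two distinct primes $\ell_1,\ell_2$ and let $k_i/k$ be the $\Z_{\ell_i}$-extension (an infinite field). Base changing $C$ to $k_i$ (and splitting into connected components if necessary), the infinite-field case of the proposition applied over $k_i$, together with the fact that $\tilde K$ and $K$ commute with filtered unions of the base field, shows that the image in $K(k;\sF_1,\dots,\sF_n)$ of a given $\xi$ already vanishes over some finite subextension $k'/k$ of $k_i/k$; since $K(-;\sF_1,\dots,\sF_n)$ is a cohomological Mackey functor, this image is then killed by $[k':k]$, a power of $\ell_i$. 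Letting $i$ range over $\{1,2\}$, the image vanishes. So assume $k$ infinite.

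By Proposition \ref{goodpositionlemma2} the group $\tilde K(k(C);\sF_1,\dots,\sF_n,\G_m)$ is generated by elements $\xi=\{f_1,\dots,f_{n+1}\}_{k(D)/k(C)}$ where $D\to C$ is finite surjective, $f_i\in\sF_i(k(D))$ for $i\le n$, $f_{n+1}\in k(D)^{*}$, and $\Supp(f_i)\cap\Supp(f_j)=\emptyset$ for $1\le i<j\le n$. Using the compatibility of the residue maps $\partial_v$ with transfers (Lemma \ref{l11.1}, i.e.\ Lemma \ref{l4}\,b)) and transitivity of the trace maps, the image of such a $\xi$ under $\sum_{v\in C}\Tr_{k(v)/k}\circ\partial_v$ equals $\sum_{w\in D}\Tr_{k(w)/k}\bigl(\partial_w(f_1\otimes\dots\otimes f_{n+1})\bigr)$. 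Hence we may assume $D=C$ and are reduced to proving: if $f_i\in\sF_i(k(C))$ ($i\le n$) have pairwise disjoint supports and $f_{n+1}\in k(C)^{*}$, then $\sum_{v\in C}\Tr_{k(v)/k}\bigl(\partial_v(f_1\otimes\dots\otimes f_{n+1})\bigr)$ maps to $0$ in $K(k;\sF_1,\dots,\sF_n)$.

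To produce this relation I would consider the triple $(C,f_{n+1},(f_1,\dots,f_n))$. Because the $\Supp(f_i)$ with $i\le n$ are pairwise disjoint, for every closed point $c\in C$ at most one index $i\le n$ has $c\notin R_i$ in the notation of Definition \ref{def:k-groups}, so condition \eqref{eq:con-somekawa} holds with $i(c)$ that index (arbitrary if none exists). Thus $(C,f_{n+1},(f_1,\dots,f_n))$ is a relation datum of Somekawa type, and its associated element \eqref{eq:rel-somekawa} vanishes in $K(k;\sF_1,\dots,\sF_n)$. It then remains to match that element, term by term, with $\sum_{v\in C}\Tr_{k(v)/k}(\partial_v(f_1\otimes\dots\otimes f_{n+1}))$: for $c$ in the support of the (unique) $f_{i(c)}$ this is Proposition \ref{p10.2}\,(ii) together with Lemma \ref{lem:additionalproperties}\,(2); for $c$ lying in no $\Supp(f_i)$ ($i\le n$) one has $\partial_c(f_{i(c)},f_{n+1})=v_c(f_{n+1})f_{i(c)}(c)$ by Corollary \ref{c1}\,(a), so the $c$-term on the Somekawa side equals $v_c(f_{n+1})\Tr_{k(c)/k}(f_1(c)\otimes\dots\otimes f_n(c))$, which is precisely $\partial_c(f_1\otimes\dots\otimes f_{n+1})$ at such a point; and points with $v_c(f_{n+1}-1)>0$ contribute $0$ on both sides (Lemma \ref{lem:additionalproperties}\,(1) and Corollary \ref{c1}\,(b)). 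Since both sums are finite and agree at every closed point, the two elements coincide, and the image of $\xi$ is zero.

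The main obstacle is this last bookkeeping step: one has to verify the two expressions agree at \emph{every} closed point of $C$, splitting into the cases ``$c$ lies in the support of some $f_i$ with $i\le n$'', ``$c$ is a zero or pole of $f_{n+1}$ only'', and ``$c$ lies in none of these'', while keeping straight the notational conventions for $\{\,\cdot\,\}_{E/k}$, the field over which each residue is computed, and the identification of $\partial_v$ on $\tilde K(k(C);\dots,\G_m)$ with the pairing-plus-residue description of Proposition \ref{p10.2}. A secondary point is justifying the passage to $D=C$ through the Mackey structure carried by the family $(\partial_v)$, and—for the finite-field reduction—recording that $K(-;\sF_1,\dots,\sF_n)$ is a cohomological Mackey functor commuting with filtered extensions of the base field.
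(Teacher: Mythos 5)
Your proof is correct and follows the same route as the paper's: first dispatch a finite base field by the transfer trick with two $\Z_{\ell_i}$-extensions, then use Proposition~\ref{goodpositionlemma2} to reduce to generators $\{f_1,\dots,f_{n+1}\}_{k(D)/k(C)}$ in good position, and recognize their residue sum as the Somekawa relation associated to the datum $(D,f_{n+1},(f_1,\dots,f_n))$, which is legitimate thanks to the disjoint-support condition. The paper's own proof is considerably terser — after invoking the good-position lemma it simply cites ``Definition~\ref{def:k-groups} and Lemma~\ref{lem:additionalproperties}\,(2)'' — whereas you make explicit the reduction to $D=C$ via compatibility of $(\partial_v)$ with traces and then do the point-by-point bookkeeping (the three cases for $c$) needed to match each side term by term. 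That bookkeeping is genuinely part of the argument, even if the paper elides it; in particular Lemma~\ref{lem:additionalproperties}\,(2) by itself only covers $v\in\Supp(f_i)$, and your appeal to Corollary~\ref{c1}\,(a) for the points at which every $f_i$ ($i\le n$) is regular, and to Lemma~\ref{lem:additionalproperties}\,(1) together with Corollary~\ref{c1}\,(b) for points with $v_c(f_{n+1}-1)>0$, correctly fills in the remaining cases. The one implicit ingredient both proofs rely on — the identity $c\notin R_i\iff c\in\Supp(f_i)$, i.e.\ that $\sF_i(\sO_{C,c})=\Ker(\partial_c)$ for a smooth curve, which makes the disjoint-support condition translate directly into the Somekawa condition \eqref{eq:con-somekawa} — is the localization/Gersten fact already used in \S\ref{s4.2}; you might want to name it, but this is a small matter of exposition.
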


\begin{proof} a) Assume first $k$ infinite.
If $\xi = \{f_1, \dots, f_{n+1}\}_{k(D)/k(C)}$ 
satisfies \eqref{eq:goodpos},
then we have
$\sum_{v \in C} \Tr_{k(v)/k} \circ \pd_v(\xi) = 0$
by Definition \ref{def:k-groups} and 
Lemma \ref{lem:additionalproperties} (2).
Hence the claim follows from Proposition \ref{goodpositionlemma2}.

b) If $k$ is finite, we use a classical trick: let $p_1,p_2$ be two distinct prime numbers,
and let $k_i$ be the $\Z_{p_i}$-extension of $k$. Let $x\in
\tilde K(k(C);\sF_1,\dots,\sF_n,\G_m)$. By a), the image of $x$ in $K(k; \sF_1, \dots, \sF_n)$
vanishes in $K(k_1; \sF_1, \dots, \sF_n)$ and $K(k_2; \sF_1, \dots, \sF_n)$, hence is $0$ by a
transfer argument.
 \end{proof}

Finally, we arrive at:

\begin{theorem}\label{tmain}
The homomorphism \eqref{eq1}
is an isomorphism for any $\sF_1,\dots,\sF_n\in \HI_\Nis$.
\end{theorem}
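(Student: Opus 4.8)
The plan is to assemble Theorem \ref{tmain} from the three $K$-groups introduced earlier, using Proposition \ref{p8.1} as the central reduction device. Recall that we have the chain of surjections $\tilde K(k;\sF_1,\dots,\sF_n)\Surj K(k;\sF_1,\dots,\sF_n)\Surj K'(k;\sF_1,\dots,\sF_n)$ (see \eqref{eq:threekgroups} and \eqref{eq:twokgroups}), and that Theorem \ref{thm:k'-group} identifies $K'(k;\sF_1,\dots,\sF_n)$ with $\Hom_{\DM_-^\eff}(\Z,\sF_1[0]\otimes\dots\otimes\sF_n[0])$ via \eqref{eq1'}. Since \eqref{eq1} is the composite of \eqref{eq2} (surjective by Theorem \ref{p1}) with this identification, it suffices to prove that \eqref{eq:twokgroups} is bijective for all $\sF_1,\dots,\sF_n\in\HI_\Nis$. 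By Proposition \ref{p8.1}, this is equivalent to statement (c) of that proposition: given a smooth projective connected curve $C/k$ and a surjection $f:C\to\P^1$, with $C'=f^{-1}(\P^1\setminus\{1\})$ and $\sA=h_0^\Nis(C')$ with tautological section $\iota$, the element $\theta:=\sum_{c\in C'}v_c(f)\,\Tr_{k(c)/k}(\iota(c)\otimes\dots\otimes\iota(c))$ of \eqref{eq:theta} vanishes in $K(k;\sA,\dots,\sA)$.

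First I would observe that $\sA=h_0^\Nis(C')$ is curve-like by Lemma \ref{lem:chowzero} c), so the machinery of Section \ref{sect:isom} applies to the $n$-tuple $(\sA,\dots,\sA)$. The key move is to realize $\theta$ as the image under a reciprocity map of a single symbol. Namely, the morphism $f:C\to\P^1$ gives $f\in k(C)^*$, and $\iota$ pulls back to a section of $\sA$ over the generic point; more precisely, the tautological section of $\sA=h_0^\Nis(C')$ over $C'$ gives an element of $\sA(k(C))$, which I will also call $\iota$. Then I would consider the element $\{\iota,\dots,\iota,f\}_{k(C)/k(C)}\in\tilde K(k(C);\sA,\dots,\sA,\G_m)$, and compute its residues $\partial_v$ for $v\in C$ using Lemma \ref{lem:additionalproperties} and Proposition \ref{p10.2}. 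For $v\in C'$, the section $\iota$ extends to $\sA(\sO_{C,v})$ (this is essentially the definition of $h_0^\Nis(C')$ and the reason $C'$, not $C$, is the right domain), so by Corollary \ref{c1} a) the residue $\partial_v(\iota,f)=v_v(f)\,\iota(v)$, and Lemma \ref{l11.2}/Proposition \ref{p10.2} (ii) then give $\partial_v\{\iota,\dots,\iota,f\}_{k(C)/k(C)}=v_v(f)\,\iota(v)\otimes\dots\otimes\iota(v)$ in $\tilde K(k(v);\sA,\dots,\sA)$, mapping to the corresponding term of $\theta$ in $K(k(v);\sA,\dots,\sA)$ after taking $\Tr$. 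For $v\notin C'$, i.e. $v$ over $1\in\P^1$, one has $v_v(f-1)>0$, so by Lemma \ref{lem:additionalproperties} (1) the residue $\partial_v\{\iota,\dots,\iota,f\}$ vanishes. Hence $\sum_{v\in C}\Tr_{k(v)/k}\circ\partial_v$ applied to $\{\iota,\dots,\iota,f\}_{k(C)/k(C)}$ equals (the image in $K(k;\sA,\dots,\sA)$ of) $\theta$. By Proposition \ref{cor:trivial}, this composite is the zero map, so $\theta=0$ in $K(k;\sA,\dots,\sA)$, which is exactly statement (c).

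Therefore \eqref{eq:twokgroups} is bijective, hence so is \eqref{eq1}, proving Theorem \ref{tmain}. Let me also note the reductions to the infinite-field case: Proposition \ref{cor:trivial} already handles finite $k$ by the $\Z_{p_1}$- and $\Z_{p_2}$-extension trick, and Proposition \ref{goodpositionlemma2} (which feeds Proposition \ref{cor:trivial}) requires $k$ infinite but is only invoked through Proposition \ref{cor:trivial}, so no separate argument is needed here. The step I expect to be the main obstacle is the careful bookkeeping in the residue computation: verifying that the section $\iota$ of $\sA=h_0^\Nis(C')$ genuinely lies in the image of $\sA(\sO_{C,v})\to\sA(k(C))$ for every $v\in C'$ (so that Corollary \ref{c1} a) applies with the honest value $\iota(v)$), and correctly threading the compatibility of the residue maps $\partial_v$ of \S\ref{10.5} and Proposition \ref{p10.2} through the transfers $\Tr_{k(c)/k}$, matching signs and ramification indices so that the output is precisely the sum \eqref{eq:theta} and not a twisted variant. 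Everything else is an assembly of results already proven: Theorem \ref{p1}, Theorem \ref{thm:k'-group}, Proposition \ref{p8.1}, and Proposition \ref{cor:trivial}.
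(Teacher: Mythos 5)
Your proof is correct and follows essentially the same route as the paper: reduce via Proposition \ref{p8.1} to statement (c), realize the element \eqref{eq:theta} as $\sum_{v\in C}\Tr_{k(v)/k}\circ\partial_v(\{\iota,\dots,\iota,f\}_{k(C)/k(C)})$, and conclude by Proposition \ref{cor:trivial}. If anything your residue bookkeeping is slightly more careful than the paper's one-line citation of Lemma \ref{lem:additionalproperties}: for $v\in C'$ one has $\partial_v\iota=0$, so the disjoint-support hypothesis \eqref{eq:goodpos} in Lemma \ref{lem:additionalproperties}(2) is not available and one really does need, as you do, the extension $\iota\in\sA(\sO_{C,v})$ together with Corollary \ref{c1}a) and Proposition \ref{p10.2}(ii); Lemma \ref{lem:additionalproperties}(1) is what handles the fibre over $1$.
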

\begin{proof}
It suffices to show the statement in
Proposition \ref{p8.1} (3).
With the notation therein,
the image of \eqref{eq:theta} in $K(k; \sA, \dots, \sA)$ is 
seen to be 
$\sum_{v \in C} \Tr_{k(v)/k} \circ \pd_v 
 ( \{ \iota, \dots, \iota, f \}_{k(C)/k(C)} )$
by Lemma \ref{lem:additionalproperties},
hence trivial by
Proposition \ref{cor:trivial}.
\end{proof}

\section{Application to algebraic cycles}

\begin{para} 
We assume $k$ is of characteristic zero.
Let $X$ be a $k$-scheme of finite type,
and let $M^c(X):=C_*^c(X)\in \DM_-^\eff$ 
be the motive of $X$ with compact supports 
\cite[\S 4.1]{voetri}. 
Then the sheaf $\uCH_0(X)$ of \S \ref{s1.4} agrees with $H_0(M^c(X))$
by \cite[Th. 2.2]{motiftate}. 
If $X$ is quasi-projective,  we have an isomorphism
\[ CH_{-i}(X, j+2i) \cong \Hom_{\DM_-^\eff}(\Z, M^c(X))(i)[-j]) \]
for all $i \in \Z_{\geq 0}, j \in \Z$
by \cite[Prop. 4.2.9]{voetri}.\footnote{The proof of \emph{loc. cit.} is written for equidimensional schemes but is the same in general. Moreover, the assumption ``quasi-projective" can be removed if one replaces higher Chow groups by the Zariski hypercohomology of the cycle complex as in \cite[after Theorem 1.7]{levine}.}
\end{para}

\begin{proof}[Proof of Theorem \ref{prop:motivichom}]
Using Lemma \ref{l2},
we see 
\begin{align*}
   \Hom_{\DM^{\eff}_-}&(\Z, \uCH_0(X_1)[0] \otimes 
      \dots \otimes \uCH_0(X_n)[0]\otimes \G_m[0]^{\otimes r})
\\
  \cong& \Hom_{\DM^{\eff}_-}(\Z, M^c(X_1)  \otimes 
      \dots \otimes M^c(X_n) \otimes \G_m[0]^{\otimes r}).
\\
  \cong& \Hom_{\DM^{\eff}_-}(\Z, M^c(X)(r)[r])
  \cong CH_{-r}(X, r).
\end{align*}
(Here we used $\G_m[0] \cong \Z(1)[1]$.) Now the theorem follows from Theorem \ref{tmain}.
\end{proof}


\begin{para}\label{s8.1} Let $X$ be a $k$-scheme of finite type. 
Recall that for $i \in \Z_{\geq 0}, j \in \Z$
the motivic homology of $X$ is defined by
\cite[Def. 9.4]{fv} 
\begin{align}
\label{eq-def:higherchow}
H_j(X, \Z(-i)) := \Hom_{\DM^{\eff}_-}(\Z, M(X)(i)[-j]). 
\end{align}
When $i=0$,
$H_j(X, \Z(0))$ agrees with Suslin homology \cite{sus-voe2}.
\end{para}

\begin{theorem}\label{prop:higherchow}
Let $X_1,\dots,X_n$ be $k$-schemes of finite type. Suppose either the $X_i$ are smooth or $\car k=0$. Put $X=X_1 \times \dots \times X_n$.
For any $r \geq 0$, we have an isomorphism
\[K(k;h_0^\Nis(X_1),\dots,h_0^\Nis(X_n), \G_m,\dots,\G_m)\iso
H_{-r}(X, \Z(r)).
\]
\end{theorem}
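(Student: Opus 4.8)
The plan is to deduce the theorem from the main theorem (Theorem \ref{tmain}) by a short manipulation with the homotopy $t$-structure, in exact parallel with the proof of Theorem \ref{prop:motivichom}: one simply replaces the compactly supported motive $M^c(X_i)$ --- and the sheaf $\uCH_0(X_i)=H_0(M^c(X_i))$ --- by the ordinary motive $M(X_i)$ --- and $h_0^\Nis(X_i)=H_0(M(X_i))$. Note that, unlike for Theorem \ref{prop:motivichom}, no comparison theorem (and no quasi-projectivity hypothesis) enters, since by \eqref{eq-def:higherchow} the right-hand side is \emph{defined} as $\Hom_{\DM_-^\eff}(\Z,M(X)(r)[r])$. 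First I would unwind it: by the Künneth isomorphism $M(X)\cong M(X_1)\otimes\dots\otimes M(X_n)$ and $\Z(r)[r]\cong\G_m[0]^{\otimes r}$ (which uses $\G_m[0]\cong\Z(1)[1]$), this group equals
\[
\Hom_{\DM_-^\eff}(\Z,\; M(X_1)\otimes\dots\otimes M(X_n)\otimes\G_m[0]^{\otimes r}).
\]

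Next I would pass from the $M(X_i)$ to the sheaves $h_0^\Nis(X_i)$. Each $M(X_i)$ is concentrated in non-positive degrees for the homotopy $t$-structure with $H_0(M(X_i))=h_0^\Nis(X_i)$, and $\G_m[0]$ lies in degree $0$, so by Lemma \ref{l2} the displayed tensor product lies in non-positive degrees; hence by \eqref{eq12} the group above is $H_0$ of that tensor product, evaluated at $k$. Applying Lemma \ref{l2} once more to the fibre of the truncation $M(X_i)\to h_0^\Nis(X_i)[0]$ (which lies in strictly negative degrees), one finds that $H_0$ of a tensor product of non-positively graded objects depends only on the $H_0$ of the factors; replacing each $M(X_i)$ by $h_0^\Nis(X_i)[0]$ in turn, the group becomes $H_0(h_0^\Nis(X_1)[0]\otimes\dots\otimes h_0^\Nis(X_n)[0]\otimes\G_m[0]^{\otimes r})(k)$. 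By Lemma \ref{l1} this is $\Hom_{\DM_-^\eff}(\Z, h_0^\Nis(X_1)[0]\otimes\dots\otimes h_0^\Nis(X_n)[0]\otimes\G_m[0]^{\otimes r})$, and Theorem \ref{tmain} identifies it with $K(k;h_0^\Nis(X_1),\dots,h_0^\Nis(X_n),\G_m,\dots,\G_m)$, proving the isomorphism.

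The only points to be checked --- and I do not expect any of them to be a real obstacle --- concern the motives $M(X_i)$: that $M(X_i)\in\DM_-^\eff$, that the Künneth isomorphism holds, and that $h_0^\Nis(X_i)=H_0(M(X_i))$ lies in $\HI_\Nis$ (needed to invoke Theorem \ref{tmain}). When the $X_i$ are smooth all three are classical: $M(X_i)=C_*(L(X_i))$, $C_*$ is symmetric monoidal, and $h_0^\Nis(X_i)=h_0^\Nis(L(X_i))$ in the sense of \S\ref{repr}; no assumption on $\car k$ is then needed. When the $X_i$ are merely of finite type one uses Voevodsky's motive of a general scheme and its Künneth property \cite[\S 4.1]{voetri}, both of which rest on resolution of singularities, whence the standing hypothesis $\car k=0$. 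Apart from this, the proof is just the routine homotopy-$t$-structure bookkeeping of the middle paragraph, literally the same as in the proof of Theorem \ref{prop:motivichom}: the genuinely hard work --- using the Steinberg relation to produce enough Weil-reciprocity relations --- has already gone into Theorem \ref{tmain}, of which the present statement is a clean corollary.
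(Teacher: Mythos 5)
Your proof is correct and follows the same route as the paper: rewrite $H_{-r}(X,\Z(r))$ via the definition \eqref{eq-def:higherchow}, apply Künneth and $\G_m[0]\cong\Z(1)[1]$, use the right-exactness of $\otimes$ (Lemma \ref{l2}) to replace $M(X_i)$ by $h_0^\Nis(X_i)[0]$ in degree $0$, and conclude by Theorem \ref{tmain}. You have merely spelled out the $t$-structure bookkeeping and the smooth/finite-type dichotomy a bit more explicitly than the paper's very terse argument, but the content is identical.
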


\begin{proof}
Using Lemma \ref{l2},
we see 
\begin{align*}
   \Hom_{\DM^{\eff}_-}&(\Z, h_0^\Nis(X_1)[0] \otimes 
      \dots \otimes h_0^\Nis(X_n)[0]\otimes \G_m[0]^{\otimes r})
\\
  \cong& \Hom_{\DM^{\eff}_-}(\Z, M(X_1)  \otimes 
      \dots \otimes M(X_n) \otimes \G_m[0]^{\otimes r}).
\\
  \cong& \Hom_{\DM^{\eff}_-}(\Z, M(X)(r)[r])
  \cong H_{-r}(X, \Z(-r)).
\end{align*}

Now the theorem follows from Theorem \ref{tmain}.
\end{proof}

\begin{remark}\label{rem:sm-proj}
If $X_1, \dots, X_n$ are smooth projective varieties,
then \eqref{eq:cycle} is valid  in any characteristic.
Indeed, 
we have $M(X_i)=M^c(X_i)$ and hence $\uCH_0(X_i)=h_0^{\Nis}(X_i)$.
Moreover, 
\cite{allagree} and \cite[Appendix B]{motiftate}
show $H_{-r}(X, \Z(-r)) \cong CH_{-r}(X, r)$.
Thus \eqref{eq:cycle} follows from
Theorem \ref{prop:higherchow}.
\end{remark}

\appendix

\section{Extending monoidal structures}\label{stens}

\begin{para} Let $\sA$ be an additive category. We write $\sA\Mod$ for the category of
contravariant additive functors from $\sA$ to abelian groups. This is a Grothendieck abelian
category. We have the additive Yoneda embedding
\[y_\sA:\sA\to \sA\Mod\]
sending an object to the corresponding representable functor.
\end{para}

\begin{para}\label{Aadj} Let $f:\sA\to \sB$ be an additive functor. We have an induced functor
$f^*:\sB\Mod\to \sA\Mod$ (``composition with $f$"). As in \cite[Exp. 1, Prop. 5.1 and
5.4]{sga4}, the functor
$f^*$ has a left adjoint
$f_!$ and a right adjoint $f_*$ and the diagram
\[\begin{CD}
\sA@>y_\sA>> \sA\Mod\\
@VfVV @Vf_!VV\\
\sB@>y_\sB>> \sB\Mod
\end{CD}\]
is naturally commutative.
\end{para}

\begin{para} If $f$ is fully faithful, then $f_!$ and $f_*$ are fully faithful and $f^*$ is a
localization, as in \cite[Exp. 1, Prop. 5.6]{sga4}. 
\end{para}

\begin{para} Suppose that $f$ has a left adjoint $g$. Then we have natural isomorphisms
\[g^*\simeq f_!,\quad g_*\simeq f^*\]
as in \cite[Exp. 1, Prop. 5.5]{sga4}.
\end{para}

\begin{para} Suppose further that $f$ is fully faithful. Then $g^*\simeq f_!$ is fully faithful.
From the composition 
\[g^*g_*\Rightarrow Id_{\sA\Mod}\Rightarrow g^*g_!\]
of the unit with the counit, one then deduces a canonical morphism of functors
\[g_*\Rightarrow g_!.\]
\end{para}

\begin{para}\label{Atens} Let $\sA$ and $\sB$ be two additive categories. Their \emph{tensor
product} is the category $\sA\boxtimes \sB$ whose objects are finite collections $(A_i,B_i)$
with
$(A_i,B_i)\in \sA\times\sB$, and
\[(\sA\boxtimes\sB)((A_i,B_i),(C_j,D_j)) = \bigoplus_{i,j}\sA(A_i,C_j)\otimes \sB(B_i,D_j).\]

We have a ``cross-product" functor
\[\boxtimes:\sA\Mod\times \sB\Mod\to (\sA\boxtimes\sB)\Mod\]
given by
\[(M\boxtimes N)((A_i,B_i)) = \bigoplus_i M(A_i)\otimes N(B_i).\]
\end{para}

\begin{para}\label{A!} Let $\sA$ be provided with a biadditive bifunctor $\bullet:\sA\times
\sA\to
\sA$. We may view $\bullet$ as an additive functor $\sA\boxtimes \sA\to \sA$. We may then extend
$\bullet$ to $\sA\Mod$ by the composition
\[\sA\Mod\times \sA\Mod\by{\boxtimes}(\sA\boxtimes \sA)\Mod\by{\bullet_!}\sA\Mod.\]
This is an extension in the sense that the diagram
\[\begin{CD}
\sA\times \sA@>y_\sA\times y_\sA>> \sA\Mod\times \sA\Mod\\
@V\bullet\times\bullet VV @V\bullet VV\\
\sA@>y_\sA>> \sA\Mod 
\end{CD}\]
is naturally commutative.

If $\bullet$ is monoidal (resp. monoidal symmetric), then its associativity and commutativity
constraints canonically extend to $\sA\Mod$.
\end{para}

\begin{para}\label{Acoh} Let $\sA,\sB$ be two  additive symmetric monoidal categories, and let
$f:\sA\to
\sB$ be an additive symmetric monoidal functor. The above definition shows that the functor
$f_!:\sA\Mod\to \sB\Mod$ is also symmetric monoidal.
\end{para}

\begin{para} In \S\ref{A!}, let us write $\bullet_!= \int$ for clarity. Let $P\in (\sA\boxtimes
\sA)\Mod$. Then $\int P$ is the \emph{left Kan extension of $P$ along $\bullet$} in the sense
of \cite[X.3]{mcl}. This gives a formula for $\int P$ as a \emph{coend} (ibid., Theorem X.4.1); for
$A\in\sA$:
\begin{equation}\label{eqA.1}
\int P(A) = \int^{(B,B')} \sA(A,B\bullet B')\otimes P(B,B').
\end{equation}

In particular:
\end{para}

\begin{prop} Suppose $\sA$ rigid. Then \eqref{eqA.1} simplifies as
\[\int P(A) = \int^B  P(B, A\bullet B^*)\]
where $B^*$ is the dual of $B\in \sA$. In particular, if $P=M\boxtimes N$ for $M,N\in \sA\Mod$,
we have for $A\in\sA$:
\begin{equation}\label{eqA.2}
(M\bullet N)(A)= \int^B  M(B)\otimes N(A\bullet B^*)
\end{equation}
which describes $M\bullet N$ as a ``convolution".
\end{prop}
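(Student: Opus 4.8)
The plan is to massage the coend formula \eqref{eqA.1} using the duality adjunction and then collapse one of the two integration variables by the co-Yoneda (density) lemma.

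First I would record, for $\sA$ rigid and symmetric monoidal, a natural isomorphism
\[ \sA(A, B\bullet B') \;\simeq\; \sA(A\bullet B^*, B'), \]
natural in $A,B,B'\in\sA$. This follows from the adjunction $(-\bullet B)\dashv(-\bullet B^*)$ that characterizes the dual $B^*$, together with the symmetry constraint of $\bullet$ and the canonical isomorphism $B^{**}\simeq B$: indeed $\sA(A,B\bullet B')\simeq\sA(A,B'\bullet B)\simeq\sA(A,B'\bullet B^{**})\simeq\sA(A\bullet B^*, B')$. Substituting this into \eqref{eqA.1} gives
\[ \int P(A) \;\simeq\; \int^{(B,B')} \sA(A\bullet B^*, B')\otimes P(B,B'). \]

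Next I would apply the Fubini theorem for coends (\cite[Ch.~IX]{mcl}) to rewrite the double coend as the iterated coend $\int^{B}\bigl(\int^{B'}\sA(A\bullet B^*,B')\otimes P(B,B')\bigr)$. For each fixed $B$, the functor $B'\mapsto P(B,B')$ is a contravariant additive functor on $\sA$, i.e. an object of $\sA\Mod$; hence by the additive co-Yoneda lemma, which says $\int^{B'}\sA(X,B')\otimes Q(B')\simeq Q(X)$ for $Q\in\sA\Mod$, applied with $X=A\bullet B^*$ and $Q=P(B,-)$, the inner coend is $\simeq P(B, A\bullet B^*)$. This yields $\int P(A)\simeq \int^{B} P(B, A\bullet B^*)$, which is the first assertion. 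The formula \eqref{eqA.2} is then immediate: for $P=M\boxtimes N$ one has $P(B, A\bullet B^*) = M(B)\otimes N(A\bullet B^*)$ by the definition of $\boxtimes$ in \S\ref{Atens}, while $M\bullet N = \int(M\boxtimes N)$ by \S\ref{A!}.

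Two routine points deserve care but present no real obstacle. One is the naturality of the duality isomorphism above, i.e. the standard fact that $B\mapsto B^*$, together with evaluation and coevaluation, is natural, so that the isomorphism descends to an isomorphism of coends. The other is that the coend in \eqref{eqA.1} is nominally taken over $\sA\boxtimes\sA$ rather than $\sA\times\sA$; since $\boxtimes$, $P$, and $\sA(A,-\bullet-)$ are all additive, and $\sA\Mod$, $(\sA\boxtimes\sA)\Mod$ are cocomplete, this coend reduces to one over $\sA\times\sA$, where Fubini and co-Yoneda apply verbatim. The only mildly delicate bookkeeping is tracking the contravariant variance of $P$, so that $P(B,-)$ is genuinely a presheaf on $\sA$ and co-Yoneda is invoked in the correct form.
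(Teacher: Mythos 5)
Your argument is essentially the paper's own proof, just spelled out more carefully: both use the rigidity adjunction to rewrite $\sA(A,B\bullet B')$ as $\sA(A\bullet B^*,B')$ and then collapse the $B'$-variable of the coend by density (co-Yoneda). The paper compresses the last step into the remark that ``$B'$ is dummy,'' which is precisely the Fubini-plus-co-Yoneda argument you give explicitly.
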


\begin{proof} Applying \eqref{eqA.1} and rigidity, we have
\begin{multline*}
\int P(A) = \int^{(B,B')} \sA(A,B\bullet B')\otimes P(B,B')\\
=\int^{(B,B')} \sA(A\bullet B^*, B')\otimes P(B,B')\\
=\int^B  P(B,A\bullet B^*)
\end{multline*}
because in the third formula, the variable $B'$ is dummy (this simplification is not in Mac
Lane!).
\end{proof}


\end{document}